\documentclass[12pt]{amsart}

\usepackage{pdfsync}

\usepackage{amssymb}
\usepackage[colorlinks=true, urlcolor=rltblue, citecolor=drkgreen, linkcolor=drkred] {hyperref}
\usepackage {hyperref}
\usepackage{color}
\definecolor{rltblue}{rgb}{0,0,0.2}
\definecolor{drkgreen}{rgb}{0,0.2,0}
\definecolor{drkred}{rgb}{0.2,0,0}
%






\addtolength{\voffset}{-10mm}
\addtolength{\textheight}{20mm}
\addtolength{\hoffset}{-15mm}
\addtolength{\textwidth}{30mm}

\newtheorem{thm}{Theorem}
\newtheorem{lemma}[thm]{Lemma}
\newtheorem{proposition}[thm]{Proposition}

\newtheorem{theorem}[thm]{Theorem}

\newtheorem{fact}[thm]{Fact}

\theoremstyle{definition}
\newtheorem{definition}[thm]{Definition}

\theoremstyle{remark}

\newtheorem{obs}[thm]{Observation}
\newtheorem{remark}[thm]{Remark}
\newtheorem{example}[thm]{Example}

\newtheorem{historic}[thm]{Historic Remark}

\theoremstyle{plain}



\newcounter{contenumi}


\def\leqt{\leq_T}
\def\geqt{\geq_T}

\def\teq{\equiv_T}
\def\D{{\mathcal D}}

\def\O{{\mathcal O}}
\def\Q{\mathcal{Q}}

\def\upto{\mathop{\upharpoonright}}
\def\la{\langle}
\def\ra{\rangle}
\def\and{\mathrel{\&}}

\def\Si{\Sigma}


\def\A{\mathcal{A}}
\def\B{\mathcal{B}}

\def\F{\mathcal{F}}

\def\om{\omega}

\def\si{\sigma}








 







\def\Gm{G_\mathbf{m}}
\def\Glip{G_\mathbf{lip}}
\def\Gtm{\tilde{G}_\mathbf{m}}
\def\Yp{Y^{\sf p}}


\def\A{\mathcal A}


\def\Si{\Sigma}

\def\om{\omega}

\def\implies{\Rightarrow}

\def\Afr{\mathfrak{A}}
\def\Bfr{\mathfrak{B}}

\def\Afr{{\mathfrak A}}
\def\Bfr{{\mathfrak B}}

\newcommand{\fr}{{}^{\smallfrown}}
\newcommand{\prim}{{\rm PRec}}

\newcommand{\mcone}{^\triangledown_\mathbf{m}}




\title{The uniform Martin's conjecture for many-one degrees}

\author{Takayuki Kihara}
\thanks{The first-named author was partially supported by a Grant-in-Aid for JSPS fellows.}

\email{kihara@math.berkeley.edu}
\urladdr{\href{http://www.math.berkeley.edu/~kihara/index.html}{www.math.berkeley.edu/$\sim$kihara}}

\author{Antonio Montalb\'an}
\thanks{The second-named author was partially supported by NSF grant DMS-0901169 and the Packard Fellowship.
}

\email{antonio@math.berkeley.edu}
\urladdr{\href{http://www.math.berkeley.edu/~antonio/index.html}{www.math.berkeley.edu/$\sim$antonio}}



\begin{document}

\maketitle


%

\begin{abstract}
We study functions from reals to reals which are uniformly degree-invariant from Turing-equivalence to many-one equivalence, and compare them ``on a cone.'' 
We prove that they are in one-to-one correspondence with the Wadge degrees, which can be viewed as a refinement of the uniform Martin's conjecture for uniformly invariant functions from Turing- to Turing-equivalence.

Our proof works in the general case of many-one degrees on $\Q^\om$ and Wadge degrees of functions $\om^\om\to\Q$ for any better quasi ordering $\Q$.
\end{abstract}

\section{Introduction}\label{sec:introduction}

The uniform version of Martin's conjecture for functions from Turing- to Turing-equivalence was proved by Slaman and Steel in \cite{Steel82, SlaSte88}.
We prove it for functions that are uniformly degree-invariant from Turing- to many-one-equivalence, getting a finer and richer structure.

Often in mathematics, and particularly in computability theory, we consider a large, complicated class of objects, among which very  few of those objects are {\em natural}, and where the class of natural objects behaves in a much better way than the whole class.
This can be disconcerting at times. 
But in some cases, the contrast between the general behavior and the behavior of natural objects can be quite interesting and intriguing.
In this paper, we consider the class of many-one degrees.

\begin{definition}
For sets $A,B\subseteq \om$, we say that $A$ is {\em many-one reducible} to $B$ (sometimes referred to as $m$-reducible and written $A\leq_m B$) if there is a computable function $\psi\colon\om\to\om$ such that $n\in A\iff \psi(n)\in B$ for all $n\in \om$.
As usual, from this pre-ordering we define an equivalence $\equiv_m$ and a degree structure referred to as the {\em m-degrees}.
\end{definition}

The many-one degrees have been widely studied in computability theory since its beginnings (see \cite[Chapters III and VI]{Odi89}).
There are quite a few natural $m$-degrees all computability theorists know: $\emptyset$, $\om$, the complete c.e.\ set (i.e., the $m$-degree of $0'$), the complete d.c.e. degree (i.e., the $m$-degree of $0'\times \bar{0'}$), the complete $\Sigma^0_2$ set, the $m$-degree of Kleene's $\O$, etc.
These are still very few compared to the whole set of $m$-degrees.
For instance, we know of no natural $m$-degree of a c.e.\ set that is neither complete nor computable, despite there being infinitely many such degrees.
We know of no natural $m$-degree of a $\Sigma^1_1$ set that is neither $\Si^1_1$-complete nor hyperarithmetic, again despite there being lots of them.
The general structure of the $m$-degrees is quite complex: there are continuum-size anti-chains; every countable poset embeds in it, even below $0'$ \cite{KP54}; its first-order theory is computably isomorphic to true second-order arithmetic \cite{NS80}; etc. (see \cite[Chapter VI]{Odi89}.)

We give a complete characterization of the {\em natural} many-one degrees.
In the same sense, a characterization of the natural Turing degrees is already well known and follows from the uniform Martin's conjecture,  which was proved by Slaman and Steel \cite{Steel82, SlaSte88}: The natural Turing degrees are, essentially, the iterates of the Turing jump through the transfinite.
Indeed, Becker \cite{Becker88} proved that the natural nonzero Turing degrees are exactly the ones obtained as the universal set of a reasonable pointclass up to relativization.
It turns out that the answer for the many-one degrees is richer: The natural many-one degrees are in one-to-one correspondence with the Wadge degrees.
Except for a few ideas we borrowed from the proof of the uniform Martin's conjecture, most of our argument is completely different.
Our results can be viewed as a refinement of the the uniform Martin's conjecture, as the jump of a natural Turing degree is a natural $m$-degree.
However, there are many natural $m$-degrees that are not distinguished by Turing equivalence.
Indeed, every natural Turing degree contains a lot of $m$-inequivalent natural $m$-degrees; for instance, the complete c.e.\ set is Turing equivalent to the complete d.c.e.\ set, though they are not $m$-equivalent.

We do not have a formal mathematical definition of what it means to be a {\em natural} $m$-degree.
Thus, there will have to be an empirical, non-mathematical claim in our argument:
\begin{quote}
Natural $m$-degrees induce Turing-to-many-one, uniformly degree-invariant functions, as in Definition \ref{def: UI}.
\end{quote}
This claim comes from the observation that, in computability,  all proofs {\em relativize}, which is also empirically observed.
That is, for any given theorem, if we change the notion of computability by that of computability relative to an oracle $X$,  the resulting theorem can then still be proved using the same proof.
Furthermore, the notions we deal with in computability theory also relativize, and so do their properties. 
Thus, if we have a natural m-degree $\bf s$, we can associate to it a function that, given an oracle $X$, returns the relativization of $\bf s$ to $X$, denoted ${\bf s}^X$.
Furthermore, if we relativize to an oracle $Y\teq X$, the classes of partial $X$-computable functions and of partial $Y$-computable functions are the same, so we should obtain the same $m$-degrees.
We let the interested reader contemplate this fact further, and we will now move on to the purely mathematical results.

Here is the definition of the uniformly degree-invariant functions we mentioned above.

\begin{definition}\label{def: UI}
We say that a function $f\colon\om^\om\to 2^\om$ is {\em uniformly $(\leq_T,\leq_m)$-order preserving} (abbreviated  $(\leq_T,\leq_m)$-UOP) if, for every $X,Y\in \om^\om$,
\[
X\leq_T Y \implies f(X)\leq_m f(Y),
\]
and furthermore, there is a computable function $u\colon\om\to\om$ such that, for all $X,Y\in\om^\om$,
\[
X\leq_TY\mbox{ via }e\;\Longrightarrow\;f(X)\leq_m f(Y)\mbox{ via }u(e).
\]
(By $X\leq_T Y$ via $e$, we mean that it is the $e$-th Turing functional $\Phi_e$ that Turing reduces $X$ to $Y$, and  analogously with $m$-reducibility.)

We say that $f$ is {\em uniformly $(\equiv_T,\equiv_m)$-invariant} (abbreviated $(\equiv_T,\equiv_m)$-UI) if there is a computable function $u\colon\om^2\to\om^2$ such that, for all $X,Y\in\om^\om$,
\[
X\equiv_TY\mbox{ via }(i,j)\;\Longrightarrow\;f(X)\equiv_m f(Y)\mbox{ via }u(i,j).
\]
\end{definition}

There is a natural notion of {\em largeness} for sets of Turing degrees given by Martin's measures:
A Turing-degree-invariant set $\A\subseteq \om^\om$ has Martin measure 1 if it contains a {\em Turing cone}, i.e., a set of the form $\{X\in \om^\om: Y\geqt X\}$ for some $X\in \om^\om$, and has Martin measure 0 otherwise.
Martin proved that if determinacy holds for all sets in a class $\Gamma$, then this is a $\si$-additive measure on the degree-invariant sets in a pointclass $\Gamma$ \cite{Martin68}.
We use this notion of largeness to extend the {\em many-one ordering} to $(\equiv_T,\equiv_m)$-UI functions.

\begin{definition}\label{def:many-one-cone for 2}
For $A,B\subseteq\omega$ and an oracle $C\in \omega^\omega$, we say that {\em $A$ is many-one reducible to $B$ relative to $C$} (and write $A\leq_m^C B$) if there is a $C$-computable function $\Phi^C_e$ such that 
\[
(\forall n\in\om)\; n\in A\iff \Phi^C_e(n)\in B.
\]
Given $f,g\colon \om^\om\to 2^\om$, we say that {\em $f$ is many-one reducible to $g$ on a cone} (and write $f \leq\mcone g$) if 
\[(\exists C\in \om^\om)(\forall X\geqt C)\;f(X)\leq_m^C g(X).\]
\end{definition}

It is clear that $\leq\mcone$ is a pre-ordering and hence induces an equivalence on functions we denote by $\equiv\mcone$.
Our objective is to compare $\equiv\mcone$-degrees of $(\equiv_T,\equiv_m)$-UI functions with the Wadge degrees.

\begin{definition}[Wadge \cite{Wadge83}]
Given $\A,\B\subseteq \om^\om$, we say that $\A$ is {\em Wadge reducible} to $\B$ (and write $\A\leq_w\B$) if there is a continuous function $f\colon\om^\om\to\om^\om$ such that $X\in \A\iff f(X)\in \B$ for all $X\in \om^\om$.
\end{definition} 

Again, $\leq_w$ is a pre-ordering which induces an equivalence $\equiv_w$ and a degree structure.
The Wadge degrees are rather well-behaved, at least under enough determinacy.
If we assume $\Gamma$-determinacy, then the Wadge degrees of sets in $\Gamma$ are semi-well-ordered in the sense that they are well-founded and all anti-chains have size at most 2 (as proved by Wadge \cite{Wadge83}, and Martin and Monk).
Furthermore, they are all {\em natural}, and we can assign {\em names} to each of them using an ordinal and a symbol from $\{\Si,\Pi\}$ (see \cite{Wesep78}), a name from which we can understand the nature of that Wadge degree.

Here is our main theorem for the case of sets:

\begin{theorem}(AD$+$DC)   \label{thm: main for 2}
There is an isomorphism between the partial ordering of $\equiv\mcone$-degrees of $(\equiv_T,\equiv_m)$-UI functions  ordered by $\leq\mcone$ and the partial ordering of Wadge degrees of subsets of $\om^\om$ ordered by Wadge reducibility.
\end{theorem}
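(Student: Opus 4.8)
\textbf{The plan} is to exhibit an explicit order isomorphism $\A\mapsto[f_\A]$ from the Wadge degrees onto the $\equiv\mcone$-degrees of $(\equiv_T,\equiv_m)$-UI functions. Given $\A\subseteq\om^\om$, I would take $f_\A\colon\om^\om\to2^\om$ to be ``$\A$ in the codes, relativized.'' Fix a computable pairing $\la\cdot,\cdot\ra$ on $\om$ and a computable map $e\mapsto t(e)$ making each $\Phi_{t(e)}$ total and agreeing with $\Phi_e$ wherever the latter is total (say $\Phi_{t(e)}^X(k)=\Phi_e^X(k)$ if this halts in $\le k$ steps, and $0$ otherwise); write $\tilde\Phi_e^X:=\Phi_{t(e)}^X$, a total $X$-computable real obtained uniformly in $e$. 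Set
\[
f_\A(X)\;=\;\{\,\la e,m\ra : \la m\ra\concat\tilde\Phi_e^X\in\A\,\},
\]
where $\la m\ra\concat Y$ prepends $m$ to $Y$. (The prepended coordinate $m$ is the bookkeeping trick that makes all the verifications below fit together; in the general bqo setting one replaces $2^\om$ by $\Q^\om$ and membership in $\A$ by evaluation of the relevant function $\om^\om\to\Q$.) Since $\tilde\Phi_e^{\Phi_d^Y}=\tilde\Phi_{w(d,e)}^Y$ for a computable $w$, one sees at once that $X\leqt Y$ via $d$ implies $f_\A(X)\leq_m f_\A(Y)$ via the translation $\la e,m\ra\mapsto\la w(d,e),m\ra$, whose index is computable from $d$; so $f_\A$ is $(\leqt,\leq_m)$-UOP, and, running this in both directions, $(\equiv_T,\equiv_m)$-UI.

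\textbf{The easy direction and surjectivity.} If $\A\leq_w\B$ via a continuous $\theta$ coded by a real $D$, then for $Z\geqt D$ the assignment $e\mapsto\rho(e)$ with $\theta\circ\tilde\Phi_e^Z=\tilde\Phi_{\rho(e)}^Z$ is $D$-computable and, carrying the prepended numbers along, witnesses $f_\A(Z)\leq_m^D f_\B(Z)$; hence $f_\A\leq\mcone f_\B$, so $\A\mapsto[f_\A]$ is well defined and monotone. For surjectivity, let $f\colon\om^\om\to2^\om$ be $(\equiv_T,\equiv_m)$-UI with uniform translation $u$, and put
\[
\A_f\;=\;\{\,W : f(\mathrm{tail}_1(W))(W(0))=1\,\},\qquad \mathrm{tail}_1(W)=(W(1),W(2),\dots).
\]
Then $\la e,m\ra\in f_{\A_f}(X)\iff f(\tilde\Phi_e^X)(m)=1$. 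As $\tilde\Phi_e^X\leqt X$ via $t(e)$, UOP gives $f(\tilde\Phi_e^X)(m)=f(X)(\Phi_{u(t(e))}(m))$, so $\la e,m\ra\mapsto\Phi_{u(t(e))}(m)$ computably witnesses $f_{\A_f}\leq_m f$; and with $g_0$ a fixed index of the identity functional, $n\mapsto\la g_0,n\ra$ computably witnesses $f\leq_m f_{\A_f}$. Thus $f\equiv_m f_{\A_f}$ outright, so $[f]=[f_{\A_f}]$.

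\textbf{The hard direction} is $f_\A\leq\mcone f_\B\implies\A\leq_w\B$; together with the easy direction it gives injectivity and order-reflection, completing the isomorphism. Here I would first prove a \emph{uniformization lemma}, in the spirit of Slaman and Steel's proof of the uniform Martin conjecture: if $g\leq\mcone h$ for $(\equiv_T,\equiv_m)$-UI functions $g,h$, then there are a real $C$ and a single index $e^*$ such that $\Phi_{e^*}^C$ witnesses $g(Z)\leq_m h(Z)$ for \emph{every} $Z\geqt C$. Granting this, note that by the easy direction I may replace $\A$ by the Wadge-equivalent set $\{\la0\ra\concat V : V\in\A_0\}$ for a suitable $\A_0\equiv_w\A$ (the two exceptional degrees of $\emptyset$ and $\om^\om$ being treated by hand), so I may assume $\{X : \la0\ra\concat X\in\A\}=\A_0\equiv_w\A$. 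Apply the lemma to $f_\A,f_\B$ to obtain $C,e^*$; fix a projection index $e_0$ with $\tilde\Phi_{e_0}^{C\oplus X}=X$, and set $\la e_1,m_1\ra:=\Phi_{e^*}^C(\la e_0,0\ra)$. Then for every $X$, with $Z:=C\oplus X\geqt C$,
\begin{align*}
\la0\ra\concat X\in\A
&\iff \la e_0,0\ra\in f_\A(Z)\\
&\iff \la e_1,m_1\ra\in f_\B(Z)\\
&\iff \la m_1\ra\concat\tilde\Phi_{e_1}^{C\oplus X}\in\B,
\end{align*}
so $\theta(X):=\la m_1\ra\concat\tilde\Phi_{e_1}^{C\oplus X}$ is continuous in $X$ and reduces $\A_0$ to $\B$; hence $\A\leq_w\B$.

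\textbf{The main obstacle} is exactly the uniformization lemma: everything else is bookkeeping, whereas passing from ``for each $Z$ on a cone \emph{some} $m$-reduction of $f_\A(Z)$ to $f_\B(Z)$ exists'' to ``\emph{one} index works on a whole cone'' is where AD and DC are genuinely used. I expect its proof to follow the Slaman--Steel template --- use DC to choose a reduction per Turing degree, Martin's $\sigma$-additive measure on degree-invariant sets to localize the behaviour, and a pointed-perfect-tree together with a Recursion-Theorem argument to collapse the per-$Z$ indices to a single one on a cone, exploiting that $f_\A$ and $f_\B$ carry explicit uniform translations. A secondary, purely technical nuisance is calibrating the definition of $f_\A$ (the totalization $\tilde\Phi$ and the prepended coordinate) so that UI-ness, both monotonicity directions, and the exact $\equiv_m$ used for surjectivity all hold simultaneously.
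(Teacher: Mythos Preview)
Your proposal has the two directions backwards in difficulty, and the real obstacle is hidden in what you call the ``easy direction.'' First, your claim that $f_\A$ is $(\leqt,\leq_m)$-UOP via $\tilde\Phi_e^{\Phi_d^Y}=\tilde\Phi_{w(d,e)}^Y$ already fails: with your step-bounded totalization, simulating the oracle $\Phi_d^Y$ inflates running times, so $\Phi_{w(d,e)}^Y(k)$ need not halt in $\leq k$ steps even when $\Phi_e^{\Phi_d^Y}(k)$ does. The paper's analogue $\Bfr^C(\A)$ has the same defect, which is exactly why Lemma~\ref{lem:UOP-non-self-dual} must construct a special oracle $C$ and only succeeds when $\A$ is non-self-dual. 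More seriously, your argument that $\A\leq_w\B$ implies $f_\A\leq\mcone f_\B$ breaks: you need, for each $Z$ on a cone, a $C$-computable map $\langle e,m\rangle\mapsto\langle e',m'\rangle$ with $\langle m\rangle\concat\tilde\Phi_e^Z\in\A\iff\langle m'\rangle\concat\tilde\Phi_{e'}^Z\in\B$, but a Wadge reduction $\theta$ may read arbitrarily much of its input before emitting its first bit, so $m'=\theta(\langle m\rangle\concat\tilde\Phi_e^Z)(0)$ depends on $Z$, not just on $e,m,D$. Bridging this gap between continuous reductions and $m$-reductions is the content of Sections~4--5 (the $\Leftarrow$ half of Proposition~\ref{prop: embeddability}): the paper passes through a chain of Wadge-like games and splits into the non-self-dual and $\sigma$-join-reducible cases, the latter (Lemma~\ref{lem:UI-sji-win}) requiring thin $\Pi^0_1$ classes and the Erd\H{o}s--Tarski covering theorem --- none of which appears in your outline.

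Conversely, your ``hard direction'' is the paper's short Lemma~\ref{lem: Afr well defined}: Martin's Lemma already yields a single $m$-reduction index on a u.p.p.\ tree (your uniformization lemma overstates this as a full cone, but a tree suffices if you take $Z=T[X]$ rather than $Z=C\oplus X$), and a continuous reduction is then read off directly. Your surjectivity step has an independent error: you invoke ``UOP'' to conclude $f(\tilde\Phi_e^X)(m)=f(X)(\Phi_{u(t(e))}(m))$, but $f$ is only assumed $(\equiv_T,\equiv_m)$-UI, and from the one-sided $\tilde\Phi_e^X\leqt X$ UI yields nothing. The paper handles this by first replacing $f$ with an $\Afr$-minimal representative (using well-foundedness of the Wadge degrees) before applying the correspondence.
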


The definition of the isomorphism is not complicated (see Section \ref{sec: the plan}).
It is the proof that it is a correspondence that requires work.
We get the following simple corollaries. 
The clopen Wadge degrees correspond to the constant functions.
Then the open non-clopen Wadge degree corresponds to the $(\equiv_T,\equiv_m)$-UI function that gives the complete c.e.\ set.
Thus, there are no $(\equiv_T,\equiv_m)$-UI functions strictly in between the constant functions and the complete ones.
The Hausdorff-Kuratowski difference hierarchy of ${\bf \Delta}^0_2$ sets of reals corresponds to the Ershov hierarchy of $\Delta^0_2$ sets of natural numbers.
Thus, up to $\equiv\mcone$-equivalence, the only ${\bf \Delta}^0_2$ $(\equiv_T,\equiv_m)$-UI functions are the ones corresponding to the Ershov hierarchy.
The complete Wadge degree of the $\Si^1_1$ set of reals corresponds to the $(\equiv_T,\equiv_m)$-UI function given by the complement of the hyperjump.
Since every Wadge degree of a ${\bf \Si}^1_1$ set must be either ${\bf \Si}^1_1$-complete or Borel, we get that, up to $\equiv\mcone$-equivalence, a ${\bf \Si}^1_1$ $(\equiv_T,\equiv_m)$-UI function must be either complete or hyperarithmetic.

In our construction of Section \ref{sec: surjectivity}, we actually assign a $(\leq_T,\leq_m)$-UOP function to each Wadge degree.
Thus, our proof also gives the following theorem:

\begin{theorem}(AD$+$DC)  \label{thm: ui to upo for 2}
Every $(\equiv_T,\equiv_m)$-UI function $\om^\om\to 2^\om$ is $\equiv\mcone$-equivalent to a $(\leq_T,\leq_m)$-UOP one.
\end{theorem}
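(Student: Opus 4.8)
The plan is to deduce Theorem \ref{thm: ui to upo for 2} from the proof of Theorem \ref{thm: main for 2} rather than proving it directly; this is essentially bookkeeping about what the surjectivity half of that proof actually produces. Recall that Theorem \ref{thm: main for 2} will be established by exhibiting two maps between the two partial orderings and checking that they are mutually inverse order-isomorphisms. One direction sends a $(\equiv_T,\equiv_m)$-UI function $f$ to a Wadge degree; the other, carried out in Section \ref{sec: surjectivity}, sends a Wadge degree $[\A]_w$ to a $(\equiv_T,\equiv_m)$-UI function $g_\A$ realizing it. The key observation to record is that the functions $g_\A$ constructed in Section \ref{sec: surjectivity} are not merely $(\equiv_T,\equiv_m)$-UI: by inspection of the construction they are genuinely $(\leq_T,\leq_m)$-UOP, i.e. the monotonicity $X\leq_T Y\implies g_\A(X)\leq_m g_\A(Y)$ holds with a computable witness transformation $u\colon\om\to\om$, not just the invariance along $\equiv_T$-classes.

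Granting that, the argument is short. Let $f\colon\om^\om\to 2^\om$ be any $(\equiv_T,\equiv_m)$-UI function. By Theorem \ref{thm: main for 2}, its $\equiv\mcone$-degree corresponds to a unique Wadge degree, say that of $\A\subseteq\om^\om$; concretely, $f\equiv\mcone g$ for some function $g$ representing that Wadge degree under the isomorphism, and since the isomorphism is witnessed on the surjectivity side by the assignment $\A\mapsto g_\A$ of Section \ref{sec: surjectivity}, we may take $g=g_\A$. Thus $f\equiv\mcone g_\A$, and $g_\A$ is $(\leq_T,\leq_m)$-UOP by the strengthened reading of the construction. Hence $f$ is $\equiv\mcone$-equivalent to a $(\leq_T,\leq_m)$-UOP function, which is exactly the claim. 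In short: every $\equiv\mcone$-degree of a $(\equiv_T,\equiv_m)$-UI function already contains one of the explicitly built functions $g_\A$, and those are UOP.

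The main obstacle — and the only real content — is verifying that the functions $g_\A$ from the surjectivity construction are $(\leq_T,\leq_m)$-UOP rather than merely UI. This is not automatic: a priori a construction that is designed to be invariant along $\equiv_T$-classes could use the reduction $e$ witnessing $X\equiv_T Y$ in a way that only gives back an $m$-equivalence, not a one-directional $m$-reduction tracking a one-directional Turing reduction. So the proof must point at the relevant lemma or construction in Section \ref{sec: surjectivity} and note that there the image $g_\A(X)$ is defined from a Wadge-style ``game/strategy'' presentation in which a Turing reduction $X\leq_T Y$ via $\Phi_e$ is used only to feed $Y$-computations into an $X$-side computation, yielding directly an $m$-reduction $g_\A(X)\leq_m g_\A(Y)$ with index depending computably on $e$; one also checks uniformity of this dependence. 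Once that is in place, nothing further is needed, since $\equiv\mcone$-equivalence to $g_\A$ is already delivered by Theorem \ref{thm: main for 2}. I would therefore phrase the proof as: ``By the construction of Section \ref{sec: surjectivity}, each $g_\A$ is $(\leq_T,\leq_m)$-UOP (this is immediate from Lemma/Construction \dots, where the witness $u$ is read off); combined with Theorem \ref{thm: main for 2}, which shows every $(\equiv_T,\equiv_m)$-UI function is $\equiv\mcone$-equivalent to some $g_\A$, the theorem follows.''
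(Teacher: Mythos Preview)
Your proposal is correct and matches the paper's approach exactly. The paper states, immediately before Theorem~\ref{thm: ui to upo for 2}, that ``In our construction of Section~\ref{sec: surjectivity}, we actually assign a $(\leq_T,\leq_m)$-UOP function to each Wadge degree. Thus, our proof also gives the following theorem,'' and indeed Proposition~\ref{prop: onto} (via Lemma~\ref{lem:UOP-non-self-dual}) explicitly produces UOP functions; combined with Proposition~\ref{prop: embeddability}, this yields the claim just as you outline.
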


\subsection{The extension to better-quasi-orderings}

Our main theorem will actually be more general than Theorem \ref{thm: main for 2}.
A subset of $\om$ can be viewed as a function $\om\to 2$, and a subset of $\om^\om$ as a function $\om^\om\to 2$.
Instead, we will consider functions $\om\to \Q$ and $\om^\om\to \Q$, where $\Q$ is a better-quasi-ordering (bqo). 
The definition of better-quasi-ordering is complicated (Definition \ref{def:bqo}), so for now, let us just say that better-quasi-orderings are well-founded, have no infinite antichains, and have nice closure properties.

The generalizations of all the notions defined above are straightforward.
We include them for completeness.

\begin{definition}\label{def:many-one-cone}
Let $(\mathcal{Q};\leq_\mathcal{Q})$ be a quasi-ordered set.
For $A,B\in\mathcal{Q}^\omega$ and an oracle $C\in \om^\omega$, we say that {\em $A$ is $\Q$-many-one reducible to $B$ relative to $C$} (written $A\leq_m^C B$) if there is a $C$-computable function $\Phi_e^C\colon\om\to\om$ such that 
\[
(\forall n\in\om)\;A(n)\leq_\mathcal{Q}B(\Phi_e^C(n)).
\]
For functions $\om^\om\to \mathcal{Q}^\om$, the definitions of $(\leq_T,\leq_m)$-UOP, $(\leq_T,\leq_m)$-UI, and $\leq\mcone$ are then exactly as before, using the new notion of $\Q$-many-one reducibility.


For $\mathcal{Q}$-valued functions $\A,\B\colon\om^\om\to\mathcal{Q}$, we say that {\em $\A$ is $\Q$-Wadge reducible to $\B$} (written $\A\leq_w \B$) if there is a continuous function $\theta\colon\om^\om\to\om^\om$ such that 
\[
(\forall X\in\om^\om)\;\A(X)\leq_\mathcal{Q}\B(\theta(X)).
\]
\end{definition}

On the one hand, considering the general case does not add to the complexity of the proof --- the proofs for 2 and for general $\Q$ are essentially the same.
There are bqos  $\Q$ other than 2 for which the $\Q$-many-one degrees are interesting too. 
For $\Q=3$, the poset with three incomparable elements, Marks \cite{Marks} proved that many-one equivalence on $3^\om$ is a uniformly-universal countable Borel equivalence relation, while this is not the case for $2^\omega$.
Since $(\equiv_T,\equiv_m)$-UI functions are nothing more than uniform reductions from Turing- to many-one-equivalence, understanding such functions can shed light on the structure of countable, degree-invariant Borel equivalence relations.
For $\Q=(\om;\leq)$, we have that, for $f,g\colon\om\to\om$, $f\leq_m g$ if and only if there is a computable speed up of $g$ that grows faster than $f$, that is, if there is a computable $h\colon\om\to\om$ such that $g\circ h(n)\geq f(n)$ for all $n\in\om$.
On the side of the Wadge degrees, Steel showed that when $\Q$ is the class of ordinals, the Wadge degrees are well-founded.
In \cite{KMwadge}, the authors provide a full description of the Wadge degrees of $\Q$-valued Borel functions for each bqo $\Q$, extending work of Duparc \cite{Dup01,Dup03}, Selivanov \cite{Sel07}, and others.

Here is our main theorem:

\begin{theorem}(AD$^+$)   \label{thm: main for Q}
There is an isomorphism between the partial ordering of $\equiv\mcone$-degrees of $(\equiv_T,\equiv_m)$-UI functions $\om^\om \to\Q^\om$  ordered by $\leq\mcone$ and the partial ordering of $\Q$-Wadge degrees of functions $\om^\om\to \Q$ ordered by $\Q$-Wadge reducibility.
\end{theorem}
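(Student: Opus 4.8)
The plan is to exhibit an explicit order-isomorphism. To a function $\A\colon\om^\om\to\Q$ we associate the function $f_\A\colon\om^\om\to\Q^\om$ given by $f_\A(X)(e)=\A(\Phi^X_e)$ when $\Phi^X_e$ is total and $f_\A(X)(e)=\bot$ otherwise, where $\bot$ is a least element of $\Q$ (we may assume one exists; otherwise adjoin it). The uniform composition identity $\Phi^{\Phi^Y_i}_e=\Phi^Y_{c(i,e)}$, with $c$ computable and totality preserved on both sides, yields $f_\A(X)(e)=f_\A(Y)(c(i,e))$ whenever $X=\Phi^Y_i$, so $f_\A$ is $(\leq_T,\leq_m)$-UOP, in particular $(\equiv_T,\equiv_m)$-UI. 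This is the assignment of Section~\ref{sec: surjectivity}; together with surjectivity below it proves Theorem~\ref{thm: ui to upo for 2}. It remains to show that $\A\mapsto[f_\A]$ is monotone, order-reflecting, and onto for $\leq\mcone$ --- an order-embedding that is surjective being an isomorphism.

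Monotonicity is routine. If $\A\leq_w\B$ via a continuous $\theta$ coded by $C$, then for $X\geqt C$ one picks a computable $d$ with $\Phi^X_{d(e)}=\theta(\Phi^X_e)$ when $\Phi^X_e$ is total and $\Phi^X_{d(e)}$ partial otherwise (let $\Phi_{d(e)}$ evaluate $\Phi^X_e$ on $0,\dots,k$ before emitting its $k$th output); then $f_\A(X)(e)=\A(\Phi^X_e)\leq_\Q\B(\theta(\Phi^X_e))=f_\B(X)(d(e))$ for total $e$, with both sides $\bot$ otherwise, so $f_\A\leq\mcone f_\B$ (the bookkeeping that lets the $m$-reduction be computed relative to $C$ is exactly what the ``on a cone'' formalism absorbs). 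For surjectivity, fix a $(\equiv_T,\equiv_m)$-UI function $g$ and a cone, with base $C_0$, on which $g$ is uniformly order preserving --- a UI function is such, as in Slaman--Steel. One extracts from $g$ a function $\A\colon\om^\om\to\Q$ designed so that $f_\A(X)$ records the values of $g$ on all reals computable from $X$: concretely $\A$ encodes ``the value $g$ assigns at the coordinate coded into its argument'', so that $\A(m\concat X)=g(X)(m)$, where $m\concat X$ carries $X$ together with the number $m$. Then $g(X)$ is recovered pointwise as a computable subsequence of $f_\A(X)$, so $g\leq\mcone f_\A$; and each coordinate of $f_\A(X)$ is $g$ of some real $\leqt X$ evaluated at some $X$-computable point, so uniform order preservation of $g$ on the cone bounds it by a coordinate of $g(X)$ indexed computably relative to $C_0$, while the $\bot$-coordinates of $f_\A(X)$ are harmless since $\bot$ is least; thus $f_\A\leq\mcone g$ and $g\equiv\mcone f_\A$.

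Order-reflection is the crux, and the step where $\mathrm{AD}^+$ enters, through Wadge determinacy. Suppose $f_\A\leq\mcone f_\B$; we must derive $\A\leq_w\B$. If this failed, player~I would win the Wadge game $W(\A,\B)$ via a strategy $\sigma$. Fix an oracle $C$ witnessing $f_\A\leq\mcone f_\B$ that also computes $\sigma$, and, using Martin's measure together with the structural consequences of $\mathrm{AD}^+$, arrange the reduction to be given by a single index: $f_\A(X)(n)\leq_\Q f_\B(X)(\Phi^C_{e^\ast}(n))$ for all $n$ and all $X\geqt C$. Run $\sigma$ against the strategy for~II which, watching I build $X$, copies $\Phi^{X\oplus C}_{m^\ast}$ onto its output, where $m^\ast=\Phi^C_{e^\ast}(p)$ and $p$ is a fixed index with $\Phi^{X\oplus C}_p=X$, passing while that functional is undecided and completing arbitrarily should it diverge. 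For the resulting play, $\A(X)=f_\A(X\oplus C)(p)\leq_\Q f_\B(X\oplus C)(m^\ast)$ (provided $X\geqt C$), which reads $\A(X)\leq_\Q\B(Y)$ when $\Phi^{X\oplus C}_{m^\ast}$ is total and forces $\A(X)=\bot\leq_\Q\B(Y)$ when it is partial --- either way II wins, contradicting that $\sigma$ is winning for I. The genuinely delicate point, and where the argument departs most from the Slaman--Steel template, is to force the play to land an $X$ inside the cone on which the uniform reduction holds; this is arranged by threading the bits of $C$ through the construction so that the produced real computes $C$. Granting this, $\A\leq_w\B$, so $\A\mapsto[f_\A]$ is an order-embedding, and being onto it is the required isomorphism.
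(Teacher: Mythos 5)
Your proposal reverses the direction of the paper's map: the paper's isomorphism is induced by $\Afr(f)(n\fr X)=f(X)(n)$, going from $\equiv\mcone$-degrees of UI functions to $\Q$-Wadge degrees, whereas you build $\A\mapsto f_\A$ going the other way, with $f_\A(X)(e)=\A(\Phi^X_e)$. Your $f_\A$ plays the same role as the paper's operation $\Bfr^C(\A)(X)(e)=\A(\prim_e(C\oplus X))$, but there is an essential difference: the paper deliberately uses \emph{total} primitive recursive functionals $\prim_e$, so that $\Bfr^C(\A)$ is genuinely $\Q^\om$-valued. You instead use arbitrary Turing functionals $\Phi^X_e$, which forces you to pick a value for the divergent coordinates, and you do so by adjoining a least element $\bot$. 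That is not a harmless normalization: the theorem quantifies over a fixed bqo $\Q$, and for the central motivating case $\Q=2$ (the discrete antichain) there is no least element, so $f_\A$ lands in $(\Q\cup\{\bot\})^\om$ rather than $\Q^\om$. Your ``isomorphism'' then has the wrong codomain: it takes $\Q$-Wadge degrees to $\equiv\mcone$-degrees of $\Q_\bot^\om$-valued functions, a different and larger structure. This is also what makes your order-reflection argument go through so smoothly --- you use minimality of $\bot$ to rule out divergence --- so the convenience and the type error are the same thing.

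The paper's choice of total functionals is what creates the actual work. Because $\Phi_d$ is partial, the naive $\prim_e(C\oplus\Phi_d(Y))$ is not a $\prim_{q(d,e)}(C\oplus Y)$, so $\Bfr^C(\A)$ is \emph{not} automatically $(\leq_T,\leq_m)$-UOP; Lemma~\ref{lem:UOP-non-self-dual} recovers this by a game argument that explicitly uses non-self-duality of $\A$, and the self-dual case requires Fact~\ref{fact:sjr-equal-sd} (the $\sigma$-join-reducible decomposition). The analysis of thin $\Pi^0_1$ classes in Lemmas~\ref{lem:CDJS} and~\ref{lem:UI-sji-win}, and the chain of games $G_w,\Glip,\Gm,\Gtm$ in Section~\ref{sec: games}, is where the real difficulty lives. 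Your argument does not replace this work; it hides it inside the $\bot$ device and inside the surjectivity step.

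That surjectivity step is the second genuine gap. You assert that a $(\equiv_T,\equiv_m)$-UI function $g$ is uniformly order-preserving on a cone, ``as in Slaman--Steel.'' Slaman--Steel treat $(\equiv_T,\equiv_T)$-UI functions, not Turing-to-many-one ones; the many-one structure within a Turing degree is precisely the richness the paper is after, and nothing in their argument gives UOP on a cone for $(\equiv_T,\equiv_m)$-UI functions. Indeed, the present paper's Theorem~\ref{thm: ui to upo for Q} asserts only the weaker statement that $g$ is $\equiv\mcone$-\emph{equivalent} to a UOP function, and this is obtained as a \emph{corollary} of the whole construction (via Proposition~\ref{prop: onto}), not as an input. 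Using it (in a stronger form, no less) at the start of your surjectivity argument is circular. A smaller, fixable issue: in your order-reflection step you claim Martin's measure gives a single reduction index valid on a whole cone above $C$; Martin's Lemma actually gives a single index only on the paths of a u.p.p.\ tree $T$, and you must then route through $T[X]$ (using that $T$ is uniformly pointed to recover $X$), which is exactly what the paper does in Lemma~\ref{lem: Afr well defined}.
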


\begin{theorem}(AD$^+$)  \label{thm: ui to upo for Q}
Every $(\equiv_T,\equiv_m)$-UI function $\om^\om\to Q^\om$ is $\equiv\mcone$-equivalent to a $(\leq_T,\leq_m)$-UOP one.
\end{theorem}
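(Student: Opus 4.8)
\emph{Proof proposal.}
The plan is to obtain Theorem~\ref{thm: ui to upo for Q} from the \emph{proof} of Theorem~\ref{thm: main for Q}, not merely its statement. In Section~\ref{sec: surjectivity} the isomorphism of Theorem~\ref{thm: main for Q}, read from $\Q$-Wadge degrees to $\equiv\mcone$-degrees, is realized by an explicit assignment $\A\mapsto g_\A$, where $\A\colon\om^\om\to\Q$ represents the Wadge degree and $g_\A\colon\om^\om\to\Q^\om$ is a concrete function. The extra fact to extract from that construction is that each $g_\A$ is not merely $(\equiv_T,\equiv_m)$-UI but in fact $(\leq_T,\leq_m)$-UOP. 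Granting this, Theorem~\ref{thm: ui to upo for Q} is immediate: given a $(\equiv_T,\equiv_m)$-UI function $f$, let $\A$ represent the $\Q$-Wadge degree corresponding to $f$ under the isomorphism; then $f\equiv\mcone g_\A$, and $g_\A$ is $(\leq_T,\leq_m)$-UOP. (This genuinely uses the proof, not the statement, of Theorem~\ref{thm: main for Q}: the statement alone only gives $f\equiv\mcone g$ for some UI $g$, which is vacuous.)

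So the substantive step is the definition of $g_\A$ and the check that it is UOP. The natural shape of $g_\A(X)$ is a listing of the $\Q$-values of $\A$ along the $X$-computable points: roughly, $g_\A(X)(e)=\A(\Phi^X_e)$ when $\Phi^X_e$ is total, together with a fixed convention at indices $e$ for which $\Phi^X_e$ is not total (chosen so that $g_\A$ is $\Q^\om$-valued and Wadge-monotone in $\A$; this is a routine matter I would not dwell on). Intuitively $g_\A(X)$ is the universal $\Q$-many-one problem ``at Wadge level $\A$'' below $X$, and identifying its $\equiv\mcone$-degree with $[\A]_w$ is exactly the business of Theorem~\ref{thm: main for Q}. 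The UOP property, however, is transparent: if $X\leq_T Y$ via $i$, i.e.\ $\Phi^Y_i=X$, then by the $s$-$m$-$n$ theorem there is a computable $j$ with $\Phi^X_e=\Phi^{\Phi^Y_i}_e=\Phi^Y_{j(i,e)}$ for all $e$ (as partial functions, so totality is preserved); hence $g_\A(X)(e)=g_\A(Y)(j(i,e))$ for every $e$, so $g_\A(X)\leq_m g_\A(Y)$ via $e\mapsto j(i,e)$, whose index depends computably on $i$ --- precisely the uniformity of Definition~\ref{def: UI}. The whole argument relativizes verbatim to any oracle $C$, which is what the on-a-cone statement needs.

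The main obstacle lies not in Theorem~\ref{thm: ui to upo for Q} but in Theorem~\ref{thm: main for Q}. Its easy half --- $\A\leq_w\B\implies g_\A\leq\mcone g_\B$ --- is a direct translation: a continuous witness $\theta$ is computable from some real $C$, and on the cone above $C$ the operation $\Phi^X_e\mapsto\theta\circ\Phi^X_e$ is realized by a $C$-computable reindexing $h$, giving $g_\A(X)(e)\leq_\Q g_\B(X)(h(e))$. The hard half --- that $\A\mapsto g_\A$ is injective and surjective onto the $\equiv\mcone$-degrees of $(\equiv_T,\equiv_m)$-UI functions --- is where the ``on a cone'' machinery comes in: Martin's measure, the Slaman--Steel-style analysis of uniformly invariant functions, and, new to this paper, the $\Q$-bqo combinatorics needed to read a $\Q$-Wadge degree off an arbitrary UI function. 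That is the technical core of the paper. For Theorem~\ref{thm: ui to upo for Q} all one needs in addition is the observation of the previous paragraph, namely that the surjectivity construction can be --- and is --- carried out with UOP, rather than merely UI, representatives, which costs nothing.
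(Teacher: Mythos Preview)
Your high-level strategy is exactly the paper's: Theorem~\ref{thm: ui to upo for Q} follows because the surjectivity half of the isomorphism (Proposition~\ref{prop: onto}) already produces $(\leq_T,\leq_m)$-UOP representatives, so combining with the embedding half (Proposition~\ref{prop: embeddability}) gives $f\equiv\mcone g$ with $g$ UOP. That part is right.

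Where your proposal diverges from the paper is in the construction of $g_\A$ and, consequently, in the UOP argument. You guess $g_\A(X)(e)=\A(\Phi^X_e)$, patched by a convention at non-total indices, and observe that UOP is then immediate from the $s$-$m$-$n$ theorem. The paper instead defines $\Bfr^C(\A)(X)(e)=\A(\prim_e(C\oplus X))$ using \emph{primitive recursive} functionals, precisely to avoid partiality: since $\prim_e$ is always total, $\Afr(\Bfr^C(\A))\equiv_w\A$ is a one-line computation (Lemma~\ref{lem:onto1}). But this choice breaks your $s$-$m$-$n$ argument: if $X=\Phi_d(Y)$ then $\prim_e(C\oplus\Phi_d(Y))$ is in general \emph{not} of the form $\prim_{e'}(C\oplus Y)$, because composing a primitive recursive functional with a Turing reduction need not be primitive recursive. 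The paper therefore has to work for UOP: it plays a Wadge-style game for each reduction index $d$, extracts a Lipschitz winning strategy $\hat\theta_d$ (using non-self-duality of $\A$), and sets $C=\bigoplus_d\hat\theta_d$ so that these strategies become $C$-primitive recursive (Lemma~\ref{lem:UOP-non-self-dual}). The self-dual case is then handled by decomposing into non-self-dual pieces.

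Conversely, your construction makes UOP trivial but does \emph{not} make the Wadge-correctness $\Afr(g_\A)\equiv_w\A$ routine. The convention you invoke at non-total indices is a $\Pi^0_2$-complete distinction, so for low-complexity $\A$ (e.g.\ $\A$ constant in $\Q=2$) your $\Afr(g_\A)$ will sit strictly above $\A$; there is no default value that works uniformly for all bqo's $\Q$. So ``identifying its $\equiv\mcone$-degree with $[\A]_w$ is exactly the business of Theorem~\ref{thm: main for Q}'' is not quite right: Theorem~\ref{thm: main for Q} identifies $[g_\A]_{\equiv\mcone}$ with $[\Afr(g_\A)]_w$, and it is a separate (and for your $g_\A$, false in general) claim that the latter equals $[\A]_w$. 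In short, the partiality issue you set aside as routine is exactly the technical point the paper's construction is designed around, and the price is that UOP then requires a real argument rather than $s$-$m$-$n$.
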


\subsection{Background facts on $\Q$-Wadge degrees}

In 1970s, Martin and Monk showed that the Wadge degrees of subsets of $\om^\om$ are well-founded, and hence semi-well-ordered by Wadge's Lemma \cite{Wadge83}.
Steel then showed that the Wadge degrees of ordinal-valued functions with domain $\om^\om$ are well-ordered (see \cite[Theorem 1]{Dup03}).
Later, van Engelen--Miller--Steel \cite{EMS} employed bqo theory to unify these results, and they showed that if $\mathcal{Q}$ is bqo, then so are the Wadge degrees of $\mathcal{Q}$-valued Borel functions.
More recently, Block \cite{Blo14} introduced the notion of a {\em very strong better-quasi-order} to remove the Borel-ness assumption from van Engelen--Miller--Steel's theorem.
(We show in Section \ref{sec:set-theory} below, that under AD$^+$, bqos and very strong bqos are the same thing.)

To define bqos, we need to introduce some notation.
Let $[\om]^\om$ be the set of all strictly increasing sequences on $\om$, whose topology is inherited from $\om^\om$.
We also assume that a quasi-order $\mathcal{Q}$ is equipped with the discrete topology.
Given $X\in[\om]^\om$, by $X^-$ we denote the result of dropping the first entry from $X$ (or equivalently, $X^-=X\setminus\{\min X\}$, if we  think of $X\in[\om]^\om$ as an infinite subset of $\om$).

\begin{definition}[Nash-Williams \cite{NW68}]\label{def:bqo}
A quasi-order $\mathcal{Q}$ is called a {\em better-quasi-order} (abbreviated as bqo) if, for any continuous function $f\colon[\om]^\om\to\mathcal{Q}$,  there is $X\in[\om]^\om$ such that $f(X)\leq_\mathcal{Q}f(X^-)$.
\end{definition}

The formulation of the definition above is due to Simpson \cite{Sim85}.
It is not hard to prove that every bqo is also a {\em well-quasi-order} (often abbreviated as wqo), that is, that it is well-founded and has no infinite antichain.

\begin{example}
For a natural number $k$, the discrete order $\mathcal{Q}=(k;=)$, which we will denote by $k$, is a bqo.
More generally, every finite partial ordering is a bqo.
For $\Q=k$, the $\mathcal{Q}$-valued functions are called {\em $k$-partitions}.
\end{example}

Let us now state the key facts that we will be using about the $\Q$-Wadge degrees.
Special cases of the following facts were proved by van Engelen--Miller--Steel \cite[Theorem 3.2]{EMS} for Borel functions, and by Block \cite[Theorem 3.3.10]{Blo14} for very strong bqos $\mathcal{Q}$ under AD.
AD$^+$ proves the general result.

\begin{fact}\label{fact:Wadge-bqo}
(AD$^+$)
If $\mathcal{Q}$ is a bqo, then the Wadge degrees of $\mathcal{Q}$-valued functions on $\om^\om$ form a bqo too.
\end{fact}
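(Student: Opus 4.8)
The plan is to reduce Fact \ref{fact:Wadge-bqo} to the combinatorial definition of bqo (Definition \ref{def:bqo}) together with Wadge's Lemma. Fix a bqo $\mathcal{Q}$, let $\W_\Q$ denote the quasi-order of $\Q$-Wadge degrees, and let $g\colon[\om]^\om\to\W_\Q$ be an arbitrary continuous function; we must find $X\in[\om]^\om$ with $g(X)\leq_{\W_\Q}g(X^-)$. Since $[\om]^\om$ has a countable base and the topology on $\W_\Q$ is discrete, continuity of $g$ means $g$ is locally constant, so $[\om]^\om$ is partitioned into countably many clopen pieces on each of which $g$ names a fixed $\Q$-Wadge degree; on each piece choose a representative $\Q$-valued function on $\om^\om$. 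The first step is thus to manufacture, from $g$, a single continuous map $F\colon[\om]^\om\to(\om^\om\to\Q)$ --- more precisely a continuous $F\colon[\om]^\om\times\om^\om\to\Q$ --- such that $X\mapsto F(X,\cdot)$ picks out a function in the Wadge class $g(X)$. Here one has to be mildly careful: ``$g$ continuous'' only gives us the \emph{degree} $g(X)$ as an element of $\W_\Q$, and extracting a uniformly continuous choice of representatives uses that each Wadge degree (under AD$^+$) has a representative coded by a real, plus a uniformization/selection argument (this is where, if one wanted to be pedantic, $\SACz$-style choice or AD$^+$'s uniformization comes in).

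Next I would apply the bqo-ness of $\Q$ itself, but to a cleverly chosen continuous function into $\Q$ built out of $F$. The idea, following van Engelen--Miller--Steel and Block, is a diagonalization: we want to compare $F(X,\cdot)$ with $F(X^-,\cdot)$ Wadge-wise, and Wadge's Lemma tells us that for any two $\Q$-valued functions $A,B$ on $\om^\om$ the game $G(A,B)$ --- in which Player II tries to produce $\theta$ continuous with $A(Z)\leq_\Q B(\theta(Z))$ --- is determined, and II wins iff $A\leq_w B$, while I winning gives the ``dual'' reduction of $\neg B$-type into $\neg A$-type. So from the family $(F(X,\cdot))_{X\in[\om]^\om}$ we build a continuous $h\colon[\om]^\om\to\Q$ by running, along a fixed run of the relevant Wadge games between consecutive levels, the values of the $F(X,\cdot)$'s; the precise construction makes $h(X)\leq_\Q h(X^-)$ entail $F(X,\cdot)\leq_w F(X^-,\cdot)$. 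Applying Definition \ref{def:bqo} to $h$ yields some $X$ with $h(X)\leq_\Q h(X^-)$, hence $g(X)=[F(X,\cdot)]_w\leq_{\W_\Q}[F(X^-,\cdot)]_w=g(X^-)$, which is exactly what we need.

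The main obstacle is precisely the construction of $h$: one must encode, into a single continuous $\Q$-valued function on $[\om]^\om$, enough information about a whole sequence of Wadge comparisons so that a single $\leq_\Q$ inequality at the top propagates down to a genuine Wadge reduction. This is the heart of the Martin--Monk / van Engelen--Miller--Steel argument, and it is where determinacy is really used: one runs the Wadge games $G(F(X_n,\cdot),F(X_{n+1},\cdot))$ along a decreasing tower $X_0\supseteq X_1\supseteq\cdots$ (the ``$X^-$'' operation providing the successor step), lets the strategies play against each other, and reads off a $\Q$-label from the limiting play; determinacy guarantees that if infinitely often Player I wins then one derives a contradiction with bqo-ness of $\Q$ at a lower level, and otherwise Player II's strategies assemble into the desired continuous reductions. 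A secondary technical point is checking that all the maps produced are honestly continuous on $[\om]^\om$ (not merely Baire-measurable), which is automatic here because the games have closed payoff for the ``II produces a continuous $\theta$'' clause and the strategies are finitely-determined on finite positions. Finally, one should remark that the same proof shows $\W_\Q$ is even a \emph{very strong} bqo, matching Block's refinement, since under AD$^+$ the two notions coincide (as proved in Section \ref{sec:set-theory}).
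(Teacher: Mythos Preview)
The paper does not prove Fact~\ref{fact:Wadge-bqo} directly; it cites van Engelen--Miller--Steel and Block, and in the Observation of Section~\ref{sec:set-theory} explains how AD$^+$ closes the gap: AD$^+$ implies all sets are Ramsey, hence (via Louveau--Simpson) every function $[\om]^\om\to\Q$ restricts to a continuous one on some $[X]^\om$, so every bqo is automatically a \emph{very strong} bqo; Block's version of the van Engelen--Miller--Steel argument then applies verbatim.

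Your sketch follows the same Martin--Monk/van Engelen--Miller--Steel template, but there is a genuine gap at exactly the point you flag as ``secondary.'' The claim that the auxiliary map $h\colon[\om]^\om\to\Q$ is ``honestly continuous\dots\ automatic here because the games have closed payoff'' is not correct. The payoff set of the $\Q$-Wadge game is not closed: whether II wins depends on the values $\A(X)$ and $\B(\theta(X))$, which are determined only by the full infinite play. More to the point, the map sending $X$ to a winning strategy in $G_w(F(X,\cdot),F(X^-,\cdot))$ need not be continuous (or even definable without choice), so the assembled $h$ is in general only an arbitrary function $[\om]^\om\to\Q$. This is precisely why Block must assume $\Q$ is a \emph{very strong} bqo (the bqo property for \emph{all} functions, not just continuous ones), and why the paper invokes Ramsey to show that under AD$^+$ ordinary bqo already suffices.

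You have the logical dependency inverted in your last sentence: the equivalence ``bqo $=$ very strong bqo under AD$^+$'' is not a corollary of the argument but the essential input that makes it go through. Without it, your $h$ need not witness anything about the original (continuous) definition of bqo for $\Q$.
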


There are two more facts about $\Q$-Wadge degrees that we will use throughout the paper.

\begin{definition}
We say that a $\mathcal{Q}$-Wadge degree $\mathbf{a}$ is {\em $\sigma$-join-reducible} if $\mathbf{a}$ is the least upper bound of a countable collection $(\mathbf{b}_i)_{i\in\om}$ of $\mathcal{Q}$-Wadge degrees such that $\mathbf{b}_i<_w\mathbf{a}$.
Otherwise, we say that $\mathbf{a}$ is {\em $\sigma$-join-irreducible}.
\end{definition}

The following fact gives a better way to characterize $\si$-join-reducibility.
Its proof uses the well-foundedness of the $\mathcal{Q}$-Wadge degrees, which is an immediate consequence of Fact \ref{fact:Wadge-bqo}.

\begin{fact}\label{fact:sji-deg-pres}
(AD$^+$)
Let $\mathcal{Q}$ be a bqo.
A function $\A\colon\om^\om\to\mathcal{Q}$ is $\sigma$-join-irreducible if and only if there is an $X\in\om^\om$ such that $\A\leq_w\A\upto[X\upto n]$ for every $n\in\om$.

A function $\A\colon\om^\om\to\mathcal{Q}$ is $\sigma$-join-reducible if and only if it is Wadge equivalent to a function of the form $\bigoplus_{n\in\om}\A_n$, where each $\A_n$ is $\si$-join-irreducible and $\A_n<_w\A$, and where $\bigoplus_{n\in\om}\A_n$ is defined by $(\bigoplus_{n\in\om}\A_n)(n\fr X)=\A_n(X)$.
\end{fact}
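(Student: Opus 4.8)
The plan is to exploit the semi-well-ordering of the $\Q$-Wadge degrees (granted by Fact~\ref{fact:Wadge-bqo}), together with closure of $\Q$-Wadge degrees under the countable join operation $\bigoplus$. First I would record the elementary observations: for any $\A$ and any $X$, the restrictions $\A\upto[X\upto n]$ are $\leq_w\A$ (they are obtained by precomposing with a continuous retraction of $\om^\om$ onto the basic clopen set $[X\upto n]$), and the sequence $(\A\upto[X\upto n])_{n\in\om}$ is $\leq_w$-nonincreasing in $n$; moreover $\A\equiv_w\bigoplus_{i\in\om}\A\upto[X\upto i\concat\la X(i)'\ra]$ when $X(i)'$ ranges over the immediate successors, i.e.\ $\A$ is always the join of the pieces it splits into after reading one more bit. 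This ``splitting'' picture is the engine of both directions.

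For the first equivalence, suppose $\A$ is $\sigma$-join-irreducible. I build $X$ by a greedy recursion: having chosen $X\upto n$, among the finitely (countably) many one-bit extensions $\A\upto[X\upto n\concat\la k\ra]$ we know, by $\sigma$-join-irreducibility applied to the decomposition of $\A\upto[X\upto n]$ into those pieces, that at least one of them is $\equiv_w\A\upto[X\upto n]$ — for otherwise $\A\upto[X\upto n]$, hence (pushing down through the already-chosen finite stem, which preserves $\equiv_w$ on the relevant cone) $\A$ itself, would be a join of strictly smaller degrees. Choose $X(n)$ to witness this. Then by induction $\A\upto[X\upto n]\equiv_w\A$ for all $n$, giving the desired $X$. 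Conversely, if such an $X$ exists and $\A\equiv_w\bigoplus_i\B_i$ with each $\B_i<_w\A$, then for some $n$ the clopen set $[X\upto n]$ is mapped by the reduction entirely into a single summand (here one uses that a continuous map sends a sufficiently deep basic clopen set into any prescribed clopen neighborhood of the image of $X$), so $\A\equiv_w\A\upto[X\upto n]\leq_w\B_i<_w\A$, a contradiction; hence $\A$ is $\sigma$-join-irreducible.

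For the second equivalence, the nontrivial direction is: every $\sigma$-join-reducible $\A$ is Wadge-equivalent to a countable join of strictly smaller $\sigma$-join-irreducibles. Starting from a witnessing decomposition $\A\equiv_w\bigoplus_i\C_i$ with $\C_i<_w\A$, I would repeatedly decompose any summand that is itself $\sigma$-join-reducible; since the $\Q$-Wadge degrees are well-founded (Fact~\ref{fact:Wadge-bqo}), there is no infinite strictly descending sequence of degrees, so this process terminates on each branch and, after reorganizing the resulting countable tree of decompositions into a single sequence, yields $\A\equiv_w\bigoplus_{n}\A_n$ with each $\A_n$ $\sigma$-join-irreducible and $\A_n<_w\A$. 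The reverse direction is immediate: a function of the form $\bigoplus_n\A_n$ with all $\A_n<_w\A\equiv_w\bigoplus_n\A_n$ is by definition a least upper bound of the $\A_n$ (least, because any common upper bound $\B\geq_w\A_n$ for all $n$ satisfies $\B\geq_w\bigoplus_n\A_n$), so it is $\sigma$-join-reducible.

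The main obstacle is the well-foundedness bookkeeping in the second direction: a priori the iterated decomposition could branch and nest transfinitely, and one must be careful that ``keep splitting reducible summands'' really halts and that the flattening into a single $\om$-indexed join does not reintroduce a summand $\equiv_w\A$. This is exactly where Fact~\ref{fact:Wadge-bqo} (well-foundedness, plus no infinite antichains so the splitting is genuinely into countably many pieces) is used; the continuity/clopen-set manipulations in the other direction are routine once stated carefully.
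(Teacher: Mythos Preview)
The paper does not actually prove Fact~\ref{fact:sji-deg-pres}: it is listed among the background facts on $\Q$-Wadge degrees, with only the remark that ``its proof uses the well-foundedness of the $\mathcal{Q}$-Wadge degrees, which is an immediate consequence of Fact~\ref{fact:Wadge-bqo}.'' Your sketch supplies exactly this standard argument---the greedy construction of $X$ via the one-step splitting $\A\upto[\sigma]\equiv_w\bigoplus_k\A\upto[\sigma\concat\la k\ra]$, the continuity argument for the converse, and the well-founded tree of iterated decompositions for the second equivalence---and it is correct. The one point worth making explicit (you gesture at it) is that in the inductive step you are applying $\sigma$-join-irreducibility to $\A\upto[X\upto n]$, which is legitimate because the inductive hypothesis gives $\A\upto[X\upto n]\equiv_w\A$ and $\sigma$-join-irreducibility is a property of the Wadge degree; and in the iterated-decomposition argument, the equivalence $\A\equiv_w\bigoplus(\text{leaves})$ follows by induction on the (ordinal) rank of nodes in the well-founded decomposition tree.
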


The third fact that we need is a generalization of Steel--van Wesep's theorem \cite{Wesep78} from $\Q=2$ to general $\Q$, proved by Block \cite{Blo14}.
The following generalization of self-duality is due to Louveau and Saint-Raymond \cite{LS90}.

\begin{definition}
We say that a function $\mathcal{A}\colon\om^\om\to\mathcal{Q}$ is {\em self-dual} if there is a continuous function $\theta\colon\om^\om\to\om^\om$ such that $\mathcal{A}(\theta(X))\not\leq_\mathcal{Q}\mathcal{A}(X)$ for all $X\in\om^\om$.
\end{definition}

Assuming AD, Block \cite[Proposition 3.5.4]{Blo14} showed the following fact for very strong bqos.
We get it for all bqos under AD$^+$:

\begin{fact}\label{fact:sjr-equal-sd}
(AD$^+$)
Let $\mathcal{Q}$ be a bqo.
Then a $\mathcal{Q}$-valued function on $\om^\om$ is self-dual if and only if it is $\sigma$-join-reducible.
\end{fact}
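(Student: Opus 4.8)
The plan is to prove both directions by relating self-duality and $\sigma$-join-reducibility through the characterizations already available to us in Fact \ref{fact:sji-deg-pres}, together with a Wadge-game / strategy-stealing argument that uses AD$^+$ only through the determinacy of the relevant Wadge games and the well-foundedness of the $\Q$-Wadge hierarchy (Fact \ref{fact:Wadge-bqo}).

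First I would do the easy direction: if $\A$ is $\sigma$-join-reducible, then $\A$ is self-dual. By Fact \ref{fact:sji-deg-pres}, $\A\equiv_w\bigoplus_{n\in\om}\A_n$ with each $\A_n<_w\A$ and each $\A_n$ $\sigma$-join-irreducible. Working with $\bigoplus_n\A_n$ in place of $\A$ (self-duality is a Wadge-degree invariant, which I would note explicitly), the desired witness $\theta$ is obtained by a ``shift'' argument: since each $\A_n<_w\bigoplus_m\A_m$, one can uniformly pick continuous reductions and splice them so that, reading the first coordinate $n$ of the input $n\fr X$, the output lands in a component that strictly dominates $\A_n$ — concretely, one arranges $\A(\theta(n\fr X))\not\leq_\Q\A(n\fr X)=\A_n(X)$ by routing into a component $\A_{g(n)}$ with $\A_n<_w\A_{g(n)}$ and using that the reduction witnesses non-reducibility on all inputs. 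This is the place where a little care is needed to make the map total and continuous, but it is routine.

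The substantial direction is the converse: if $\A$ is self-dual, then $\A$ is $\sigma$-join-reducible. Here I would argue by contraposition using the first clause of Fact \ref{fact:sji-deg-pres}: suppose $\A$ is $\sigma$-join-irreducible, so there is $X\in\om^\om$ with $\A\leq_w\A\upto[X\upharpoonright n]$ for every $n$. I want to contradict self-duality, i.e., to show that for every continuous $\theta\colon\om^\om\to\om^\om$ there is some $Z$ with $\A(\theta(Z))\leq_\Q\A(Z)$. The idea is to play the Wadge-style game in which one builds $Z$ against $\theta$; since $\theta$ is continuous, $\theta(Z)$ is determined piece by piece, and at each stage we get longer and longer initial segments $\sigma_n = \theta(Z)\upharpoonright m_n$. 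The strategy for building $Z$ is to ``chase'' $\theta(Z)$ into the part of the space where $\A$ restricted to that piece is still $\geq_w\A$ — using the $\sigma_n$ and the displayed property $\A\leq_w\A\upto[X\upharpoonright n]$ to keep $Z$ inside a shrinking sequence of basic clopen sets on which $\A$ does not drop in Wadge degree — thereby arranging $\A(\theta(Z))\leq_\Q\A(Z)$ in the limit. Determinacy (from AD$^+$) guarantees this construction goes through against an arbitrary continuous $\theta$; well-foundedness of the hierarchy is what makes the ``degree does not drop'' bookkeeping terminate in a usable limit configuration.

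The main obstacle I expect is precisely this converse: turning the abstract witness $X$ from Fact \ref{fact:sji-deg-pres} into an explicit winning strategy that defeats \emph{every} continuous $\theta$, and in particular handling the case where $\theta$ is not open-ish and the induced initial segments of $\theta(Z)$ grow slowly or not at all (i.e., $\theta(Z)$ could be eventually constant on a coordinate). One needs a genuine determinacy argument rather than a direct diagonalization, and one must be careful that the ``no Wadge drop'' invariant one maintains is exactly the one provided by the $\sigma$-join-irreducibility characterization — so that the limit $Z$ really satisfies $\A(\theta(Z))\leq_\Q\A(Z)$ and not merely something weaker. I would also flag that this is essentially Block's argument \cite[Proposition 3.5.4]{Blo14} transported from very strong bqos under AD to all bqos under AD$^+$, so the only genuinely new input is replacing ``very strong bqo'' hypotheses by the AD$^+$ facts \ref{fact:Wadge-bqo} and \ref{fact:sji-deg-pres}; the combinatorial core of the strategy construction is unchanged.
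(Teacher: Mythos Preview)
The paper does not give a self-contained proof of this Fact; it is stated as a result of Block and justified only by the outline in Section~\ref{sec:set-theory}: first use Block's argument to obtain the Steel--van Wesep theorem for $\Q$-valued functions (Wadge self-duality $\Longleftrightarrow$ Lipschitz self-duality), and then deduce Fact~\ref{fact:sjr-equal-sd} from that together with Fact~\ref{fact:sji-deg-pres} via the ``standard argument'' in \cite{Wesep78,Blo14}. Your proposal cites the same source at the end, so at that level the approaches agree; but the body of your sketch diverges from this route in two places that matter.

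First, your easy direction contains a concrete error. You propose, for $\A\equiv_w\bigoplus_n\A_n$ with $\A_n<_w\A$, to route $n\fr X$ to a component $\A_{g(n)}$ with $\A_n<_w\A_{g(n)}$. No such $g$ need exist: already for $\Q=2$ one can have $\A_0$ a $\bfSi^0_1$-complete set and $\A_1$ a $\bfPi^0_1$-complete set, so the two components are Wadge-incomparable and there is no strictly larger component to route to. The correct mechanism is not a comparison between components but the non-reducibility $\A\not\leq_w\A_n$: by determinacy Player~I wins $\Glip(\A,\A_n)$, and his strategy $\tau_n$ yields, for every $Y$, a play $\tau_n\!*\!Y$ with $\A(\tau_n\!*\!Y)\not\leq_\Q\A_n(Y)$; setting $\theta(n\fr Y)=\tau_n\!*\!Y$ gives the self-duality witness.

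Second, your hard direction omits the step that carries the real weight in the paper's outline: the Steel--van Wesep equivalence of Wadge and Lipschitz self-duality. What one proves directly from Fact~\ref{fact:sji-deg-pres} is only that $\sigma$-join-irreducibility implies the failure of \emph{Lipschitz} self-duality (since $\A\leq_w\A\upto[X(0)]$ lets Player~I win the Lipschitz diagonal game by opening with $X(0)$). Upgrading this to the failure of \emph{Wadge} self-duality is exactly Steel--van Wesep, whose proof is a Martin--Monk style well-foundedness argument, not the ``chase $\theta(Z)$'' construction you describe. Your sketch tries to go straight from the $\sigma$-join-irreducibility witness $X$ to a $Z$ with $\A(\theta(Z))\leq_\Q\A(Z)$, but maintaining ``$\A$ does not drop in Wadge degree on $[Z\upto k]$'' gives you information about Wadge reductions between restrictions, not the pointwise $\leq_\Q$ relation at the limit point $Z$; there is no evident way to close that gap without the Lipschitz detour. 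If you intend to follow Block, you should explicitly insert the Steel--van Wesep step rather than attempting a direct argument.
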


\subsection{The set-theoretic assumptions}
\label{sec:set-theory}

Our main theorems are stated under the assumption of AD$^+$, which is an extension of the axiom of determinacy  introduced by Woodin \cite{Woo99}.
If we want to assume less than AD$^+$, our results are still true for restricted classes of functions.
For instance, they are true for Borel functions just in ZFC, and true for projective functions if we assume DC+PD.

Let $\Gamma$ be a pointclass of sets of reals containing all Borel sets closed under countable unions, finite intersections, and continuous substitutions.
We concentrate on $\Gamma$-functions $f:\om^\om\to\mathcal{Q}$ whose range is countable, where a function $g:\om^\om\to\mathcal{Q}^\om$ can also be thought of as a function from $\om^{\om}\times\om$ ($\simeq\om^\om$) to $\mathcal{Q}$ in an obvious way (see also Definition \ref{def:uncurrying}).

For our results to hold for functions in $\Gamma$, we need to assume, first, that all Wadge-like games (introduced in Section \ref{sec: games}) for $\Gamma$-functions are determined, and second, that Facts \ref {fact:Wadge-bqo}, \ref{fact:sji-deg-pres}, and \ref{fact:sjr-equal-sd} hold for functions in $\Gamma$.
The first assertion is ensured by assuming that {\em all sets in $\Gamma$ are determined} whenever the ranges of functions are countable.
Our assumption of countability of the range is only used to ensure this part (and thus this restriction can be removed under AD).

We will now argue that assuming that {\em all sets in $\Gamma$ are Ramsey} gives us these three facts for any bqo $\mathcal{Q}$.
Note that this $\Gamma$-Ramsey hypothesis actually implies that all sets in $\Gamma$ are completely Ramsey (that is, all sets in $\Gamma$ have the Baire property with respect to the Ellentuck topology) under our assumption on $\Gamma$ (see Brendle-L\"owe \cite[Lemma 2.1]{BrLo99}).
Fact \ref{fact:sji-deg-pres} only uses well-foundedness of $\mathcal{Q}$-Wadge degrees of $\Gamma$-functions, which clearly follows from Fact \ref{fact:Wadge-bqo}, on top of ZFC.
For Facts \ref{fact:Wadge-bqo} and \ref{fact:sjr-equal-sd} we need the following observation.

\begin{obs}
Suppose that all sets in $\Gamma$ are determined and Ramsey, and let $\Q$ be a bqo.
We say that $\Q$ is a {\em $\Gamma$-bqo} if, for every $\Gamma$-function $f\colon[\om]^\om\to\mathcal{Q}$, there is $X\in[\om]^\om$ such that $f(X)\leq_\mathcal{Q}f(X^-)$.

Our assumption on $\Gamma$ implies that if $\Q$ is a bqo, it is also a $\Gamma$-bqo:
This is because every such $f$ in $\Gamma$ has the Baire property with respect to the Ellentuck topology by our assumption that all sets in $\Gamma$ are completely Ramsey.
Louveau-Simpson \cite{LoSi82} showed that, for every Ellentuck-Baire function $f:[\om]^\om\to\mathcal{Y}$ where $\mathcal{Y}$ is a metric space,  not necessarily separable, there exists an infinite set $X\subseteq \om$ such that $f$ is continuous when restricted to $[X]^\om$.
By applying this to the discrete metric space $\mathcal{Y}:=\mathcal{Q}$, the above argument verifies our claim.

One can then carry out the van Engelen--Miller--Steel proof \cite[Theorem 3.2]{EMS} for $\Gamma$ functions exactly as Block did in \cite[Theorem 3.3.10]{Blo14} to get that the $\Q$-Wadge degrees of functions in $\Gamma$ are bqo.
%
%
%
The argument only requires that $\Gamma$ is closed under countable (separated) union and continuous substitution.
Similarly, we can use Block's argument \cite[Theorem 3.4.4]{Blo14} to show the Steel-van Wesep Theorem \cite{Wesep78} for $\mathcal{Q}$-valued $\Gamma$-functions, that is, that Wadge self-duality and Lipschitz self-duality are equivalent for $\Gamma$-functions.
Fact \ref{fact:sjr-equal-sd} then follows from the standard argument (see \cite[Section 3]{Wesep78} or \cite[Proposition 3.5.4]{Blo14}) and Fact \ref{fact:sji-deg-pres}.
\end{obs}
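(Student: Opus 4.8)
The plan is to verify, under the standing hypothesis that every set in $\Gamma$ is determined and Ramsey, that the three properties recorded in Facts~\ref{fact:Wadge-bqo}, \ref{fact:sji-deg-pres}, and~\ref{fact:sjr-equal-sd} survive when ``function'' is everywhere weakened to ``$\Gamma$-function''. The whole thing reduces to one core assertion --- that a bqo $\Q$ is automatically a $\Gamma$-bqo --- together with the bookkeeping needed to rerun three existing proofs with $\Gamma$-functions in place of Borel (or arbitrary) functions.

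For the core claim I would argue as follows. Fix a $\Gamma$-function $f\colon[\om]^\om\to\Q$; since its range is countable by our standing convention, every preimage $f^{-1}(U)$ of a subset $U\subseteq\Q$ is a countable union of $\Gamma$-sets, hence in $\Gamma$. The closure hypotheses on $\Gamma$ together with $\Gamma$-Ramsey-ness upgrade, by Brendle--L\"owe \cite[Lemma~2.1]{BrLo99}, to: every set in $\Gamma$ is completely Ramsey, i.e.\ has the Baire property with respect to the Ellentuck topology. Hence $f$ is Ellentuck-Baire, and the Louveau--Simpson localization \cite{LoSi82} --- applied to $\Q$ with the discrete metric --- yields an infinite $X\subseteq\om$ with $f\upto[X]^\om$ continuous. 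Let $e\colon[\om]^\om\to[X]^\om$ be the increasing enumeration of $X$; it is a homeomorphism and it commutes with $Y\mapsto Y^-$, so $f\circ e\colon[\om]^\om\to\Q$ is continuous, and bqo-ness of $\Q$ furnishes $Z\in[\om]^\om$ with $f(e(Z))\leq_\Q f(e(Z^-))=f\big(e(Z)^-\big)$. Thus $e(Z)\in[\om]^\om$ witnesses the $\Gamma$-bqo condition for $f$.

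With ``bqo $\Rightarrow$ $\Gamma$-bqo'' in hand, I would get Fact~\ref{fact:Wadge-bqo} for $\Gamma$-functions by replaying the van Engelen--Miller--Steel argument \cite[Theorem~3.2]{EMS} exactly in the form Block gave it \cite[Theorem~3.3.10]{Blo14} for very strong bqos, checking that its only ambient demands are: determinacy of the relevant Wadge-type games for $\Gamma$-functions (supplied by $\Gamma$-determinacy), closure of $\Gamma$ under countable separated unions and continuous substitution (standing hypotheses), and the $\Gamma$-bqo property just established. Fact~\ref{fact:sji-deg-pres} for $\Gamma$-functions then requires only well-foundedness of the $\Q$-Wadge degrees of $\Gamma$-functions, which is immediate from Fact~\ref{fact:Wadge-bqo}. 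Finally, for Fact~\ref{fact:sjr-equal-sd} I would first push Block's proof of the Steel--van Wesep theorem \cite[Theorem~3.4.4]{Blo14} through for $\Gamma$-functions to obtain that Wadge and Lipschitz self-duality coincide there, and then combine this with Fact~\ref{fact:sji-deg-pres} via the standard self-dual / $\sigma$-join-reducible equivalence argument of \cite[Section~3]{Wesep78} (or \cite[Proposition~3.5.4]{Blo14}).

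The main obstacle is the one \emph{genuinely new} step: making sure that a countable-range $\Gamma$-function into a discrete space really is Ellentuck-Baire, and that the Louveau--Simpson localization applies in the generality we need (a possibly non-separable discrete target). Everything downstream is the routine verification that the cited proofs depend only on the structural features of $\Gamma$ listed above, so that they go through verbatim with ``$\Gamma$'' substituted for ``Borel'' and the $\Gamma$-bqo hypothesis substituted for bqo-ness.
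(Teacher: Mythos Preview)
Your proposal is correct and follows essentially the same approach as the paper: the Brendle--L\"owe upgrade to completely Ramsey, the Louveau--Simpson localization applied to $\Q$ with the discrete metric, and then the verbatim rerunning of the van Engelen--Miller--Steel/Block and Steel--van Wesep arguments under the stated closure properties of $\Gamma$. Your explicit use of the enumeration homeomorphism $e$ and its commutation with $Y\mapsto Y^-$ just spells out a detail the paper leaves implicit.
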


Therefore, what we actually prove in this paper is the following:

\begin{quote}
If we assume that all sets in $\Gamma$ are determined and Ramsey, then our main Theorems \ref{thm: main for 2} and \ref{thm: main for Q} hold when restricted to $\Gamma$-functions whose range is countable.
\end{quote}

In particular, Theorems \ref{thm: main for 2}, \ref{thm: ui to upo for 2}, \ref{thm: main for Q}, and \ref{thm: ui to upo for Q}  for Borel functions can be proved in ZFC (since all Borel sets are determinied and Ramsey under ZFC \cite{Mar75,GaPr73}), and for projective functions can be proved under PD (since all projective sets are Ramsey under PD \cite{HaKe81}; indeed, $\mathbf{\Delta}^1_n$-determinacy implies that all $\mathbf{\Pi}^1_n$ sets are Ramsey for any positive even number $n$).
Our assumption AD$^+$ implies that all sets of reals are determined and Ramsey.

We also notice that our hypothesis that all $\Gamma$-sets are Ramsey is only used to ensure that every bqo is $\Gamma$-bqo.
For $\mathcal{Q}=2$, we can prove our main theorem without assuming the $\Gamma$-Ramsey hypothesis.
This is because the discrete ordered set $2=\{0,1\}$ is a very strong bqo (i.e., $\Gamma$-bqo for any $\Gamma$) within AD$+$DC (see Block \cite[Corollary 3.3.9]{Blo14}).
Indeed, Wadge's Lemma, Martin--Monk's Lemma, and Steel--van Wesep's Theorem are all provable in AD$+$DC, and these are all that we need to prove our main theorem.
This is the reason why we can state Theorems \ref{thm: main for 2} and \ref{thm: ui to upo for 2} only assuming AD$+$DC.

We will not mention these assumptions anymore through the rest of the paper.
The reader may either assume AD$^+$, or assume that we are only working with functions in a pointclass $\Gamma$ all of whose sets are determined and Ramsey.

\section{The Plan}\label{sec: the plan}

The mapping $\Afr$ that we will use to embed the $(\equiv_T,\equiv_m)$-UI functions into the $\Q$-Wadge degrees is quite simple.
The difficult part will be to prove that it actually gives a one-to-one correspondence.

\begin{definition}\label{def:uncurrying}
Given $f\colon\om^\om\to\mathcal{Q}^\om$, we define a function $\Afr(f) \colon \om^\om\to\mathcal{Q}$ as follows:
\[
\Afr(f)(n\fr X) = f(X)(n)
\]
for $n\in\om$ and $X\in\om^\om$. Here, $n\fr X$ is the concatenation of $n$ and $X$.
\end{definition}

This function will only work well on a subset of the $(\equiv_T,\equiv_m)$-UI functions, the $\Afr$-minimal functions, which we define below.
Before, we need to introduce the following notion:

\begin{definition}
Abusing notation, by {\em perfect tree} we mean a map $T[\cdot]\colon \om^{<\omega}\to \om^{<\omega}$ together with its image, satisfying $\si\subseteq \tau\iff T(\si)\subseteq T(\tau)$ for all $\si,\tau\in \om^{<\om}$.
For each $X\in \om^\om$, we can define $T[X]\in\om^\om$ in a obvious way; we often think of $T$ directly as a continuous map $T[\cdot]\colon \om^\omega\to \om^\omega$.
We use $[T]$ to denote the set $\{T[X]: X\in \om^\om\}$ of {\em paths} through $T$.

By a {\em uniformly pointed perfect tree} (abbreviated as {\em u.p.p.~tree}), we mean a perfect tree which is computable from each of its paths in a uniform way.
In other words, it is a perfect tree $T[\cdot]\colon \om^{<\omega}\to \om^{<\omega}$ such that there is an index $e$ such that $\Phi_e(Y)=T$ for any $Y\in[T]$.
\end{definition}

The main property of u.p.p.\ trees is that, for every $X\geqt T$, we have that $X \teq T[X]$, and we can compute the indices for this Turing equivalence given the index for $X\geqt T$.
Here is how u.p.p.\ trees interact with $(\equiv_T,\equiv_m)$-UI function.
In the statement of the lemma, we view the trees as maps $\om^\om\to\om^\om$.

\begin{lemma}\label{lem:UI-uppt-res}
Let $f \colon \om^\om\to\mathcal{Q}^\om$ be a $(\equiv_T,\equiv_m)$-UI function and let $S$ and $T$ be u.p.p.~trees.
\begin{enumerate}
\item If $S\leq_TT$, then $\Afr(f\circ T)\leq_w\Afr(f\circ S)$.    \label{UI-uppt-res, part 1}
\item If $f$ is $(\leq_T,\leq_m)$-UOP, then $\Afr(f\circ T)\equiv_w\Afr(f)$.                     \label{UI-uppt-res, part 3}
\item $f\circ T\equiv\mcone f$.                                                \label{UI-uppt-res, part 2}
\end{enumerate}
\end{lemma}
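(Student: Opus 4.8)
The three parts of Lemma \ref{lem:UI-uppt-res} are all consequences of the single structural fact stated right before it: a u.p.p.\ tree $T$ is computable from each of its paths via a fixed index $e$, and moreover for every $X \geq_T T$ one has $X \equiv_T T[X]$ with indices for this equivalence obtained computably from an index witnessing $X \geq_T T$. Since $f$ is $(\equiv_T,\equiv_m)$-UI, this lets us translate the Turing-side equivalences into $m$-reductions on the $\Q^\om$-side with computable control of indices, and then unwind the uncurrying operator $\Afr$.

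For part \eqref{UI-uppt-res, part 1}, suppose $S \leq_T T$, say via index $a$. Since $T$ is u.p.p., for every $Y \in [T]$ we have $T[Y'] = T$ computably and $Y \equiv_T T[Y]$; composing, we get that $S$ is computable from each path of $T$ with a uniform index, so $S[Y] \leq_T T[Y]$ uniformly (think of $S,T$ as maps $\om^\om \to \om^\om$ here). Wait --- more precisely, what I want is: the map $X \mapsto$ (an index for $T[X] \geq_T S[X]$) is computable. Given this, $(\equiv_T,\equiv_m)$-UI-ness of $f$ gives, uniformly in $X$, that $f(S[X]) \equiv_m f(T[X])$, hence in particular $f(T[X])(n) \leq_\Q$ some entry of $f(S[X])$ in a position computed uniformly from $n$ and $X$. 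Now the continuous Wadge reduction witnessing $\Afr(f\circ T) \leq_w \Afr(f \circ S)$ is built by hand: on input $n \fr X$ it must output $m \fr X'$ with $\Afr(f \circ T)(n\fr X) = f(T[X])(n) \leq_\Q f(S[X'])(m) = \Afr(f\circ S)(m \fr X')$. The continuous function does: keep $X' = X$, and use the $m$-reduction index (which depends only on $X$ through a continuous amount of information --- really only on a Turing index, which is a \emph{constant} once we fix how $S,T$ relate, so the position $m$ depends computably on $n$ alone) to compute $m$. This gives continuity. The only subtlety is making sure the index for the $m$-reduction $f(S[X]) \geq_m f(T[X])$ does not depend on $X$: it does not, because the index for $S[X] \leq_T T[X]$ is a fixed constant (built from $a$, the u.p.p.\ index of $S$, and the u.p.p.\ index of $T$), so $u$ applied to it is a fixed constant.

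Part \eqref{UI-uppt-res, part 3}: when $f$ is $(\leq_T,\leq_m)$-UOP we also have the reverse direction. Apply part \eqref{UI-uppt-res, part 1} with the roles reversed using $T \leq_T T$ trivially, or more directly: for $X \in \om^\om$, $T[X] \geq_T X$ always (a path of $T$ computes a smaller u.p.p.\ index?) --- the clean statement is $X \leq_T T[X]$ via a fixed index (the identity-like reduction through $T$) and $T[X] \leq_T X$... hmm, actually $T[X] \geq_T X$ need not hold for all $X$; rather for $X \geq_T T$ we have $X \equiv_T T[X]$. So I first reduce to the cone $\{X \geq_T T\}$ and compare on the cone, but $\equiv_w$ is not ``on a cone''. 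The right move: use that $T[X] \geq_T X$ \emph{always} holds (a path of a perfect tree is $\geq_T$ its own preimage? no). Let me instead just invoke: $X \leq_T T[X]$ via a fixed index for \emph{all} $X$ (this is genuinely true: $T[X]$ as a subsequence-coded object computes $X$ uniformly), giving $f(X) \leq_m f(T[X])$ uniformly by UOP, hence $\Afr(f) \leq_w \Afr(f\circ T)$ exactly as in part \eqref{UI-uppt-res, part 1}; combined with part \eqref{UI-uppt-res, part 1} applied to $T \leq_T T$ (giving $\Afr(f\circ T) \leq_w \Afr(f \circ T)$, useless) --- rather, combine $\Afr(f) \leq_w \Afr(f \circ T)$ with the reverse $\Afr(f \circ T) \leq_w \Afr(f)$, which follows from $T[X] \leq_T$-nothing... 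The honest reverse inequality comes from: restricting attention to $X$ of the form $T[Y]$ is harmless for computing $\Afr(f\circ T)$, and on such $X$, $T[X] \equiv_T X$ uniformly, so $f(T[X]) \equiv_m f(X)$ uniformly, giving the continuous reduction $n \fr X \mapsto n \fr T[X]$ witnessing $\Afr(f \circ T) \leq_w \Afr(f)$. So part \eqref{UI-uppt-res, part 3} follows from two applications of the part \eqref{UI-uppt-res, part 1} argument (one needing only UOP in the easy direction, one using u.p.p.-ness).

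Part \eqref{UI-uppt-res, part 2}: to show $f \circ T \equiv\mcone f$, take the cone above $T$. For $X \geq_T T$ we have $X \equiv_T T[X]$ with indices computable uniformly from (an index for) $X \geq_T T$, hence by $(\equiv_T,\equiv_m)$-UI-ness $f(X) \equiv_m f(T[X]) = (f\circ T)(X)$ with $m$-reduction indices computable uniformly in $X$ --- i.e., $T$-computably, and $T$ is in the cone, so this is exactly $f(X) \equiv_m^T (f\circ T)(X)$ for all $X \geq_T T$, giving $f \equiv\mcone f\circ T$.

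\textbf{Main obstacle.} The delicate point throughout is index bookkeeping: I need the $m$-reduction indices produced by the UI-witness $u$ to be \emph{constant} in $X$ (for parts \eqref{UI-uppt-res, part 1}, \eqref{UI-uppt-res, part 3}, so that the Wadge reduction is continuous --- indeed computable on positions) while for part \eqref{UI-uppt-res, part 2} they only need to be $T$-computable in $X$. This works because the Turing-reduction indices relating $X$ (or $S[X]$) to $T[X]$ are either absolute constants (coming from the fixed u.p.p.\ indices of $S,T$ and a fixed index for $S \leq_T T$) or, for the cone statement in part \eqref{UI-uppt-res, part 2}, uniformly computable from the oracle. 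Verifying that the u.p.p.\ property really does deliver these uniform indices --- i.e., carefully composing ``$\Phi_e(Y) = T$ for all $Y \in [T]$'' with ``$X \equiv_T T[X]$ uniformly'' --- is the one spot requiring genuine care; everything else is routine unwinding of the definition of $\Afr$.
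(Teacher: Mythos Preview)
Your argument for part \eqref{UI-uppt-res, part 1} has a genuine gap. You propose the Wadge reduction $n\fr X \mapsto m\fr X$ with $X$ unchanged, which requires $f(T[X])(n) \leq_\Q f(S[X])(m)$ for some $m$ depending computably on $n$ alone. You justify this by saying that $(\equiv_T,\equiv_m)$-UI gives $f(S[X]) \equiv_m f(T[X])$ uniformly. But UI only applies when the two inputs are Turing-\emph{equivalent}, and in general $S[X]$ and $T[X]$ are not: from $S\leq_T T$ and the u.p.p.\ property of $T$ you correctly get $S[X]\leq_T T[X]$ via a fixed index, but the reverse reduction would require $S[X]$ to compute $T$, and nothing in the hypotheses gives that. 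So UI does not apply, and even if $f$ were UOP the inequality $f(S[X]) \leq_m f(T[X])$ points the wrong way for what you need.

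The paper's fix is to change the second coordinate of the Wadge reduction rather than keep it fixed: send $n\fr X$ to $\Psi(n)\fr (T\oplus X)$. The point is that $T[X]\equiv_T S[T\oplus X]$ with indices independent of $X$: from $T[X]$ one recovers $T$ (u.p.p.), hence $S$ and $X$, hence $S[T\oplus X]$; conversely from $S[T\oplus X]$ one recovers $S$ (u.p.p.) and $T\oplus X$, hence $T[X]$. Now UI genuinely applies and produces a fixed $m$-reduction $\Psi$ with $f(T[X])(n)\leq_\Q f(S[T\oplus X])(\Psi(n))$, which unwinds to the desired Wadge inequality. This ``pad the argument with $T$'' trick is the one idea your sketch is missing.

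Your parts \eqref{UI-uppt-res, part 3} and \eqref{UI-uppt-res, part 2} are fine once the meandering is stripped away. For \eqref{UI-uppt-res, part 3}, the direction $\Afr(f)\leq_w\Afr(f\circ T)$ via the fixed index for $X\leq_T T[X]$ and UOP is exactly the paper's argument; your eventual reduction $n\fr X\mapsto n\fr T[X]$ for the reverse direction is correct and in fact simpler than invoking part \eqref{UI-uppt-res, part 1} (it needs no hypothesis on $f$ at all, since $\Afr(f\circ T)(n\fr X)=f(T[X])(n)=\Afr(f)(n\fr T[X])$ identically). Part \eqref{UI-uppt-res, part 2} matches the paper.
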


\begin{proof}
For (\ref{UI-uppt-res, part 1}), it is not hard to see that, since $S$ and $T$ are uniformly pointed and $S\leq_TT$, one can computably extract the triple $(S,T,X)$ from $T[X]$ and the pair $(T,X)$ from $S[T\oplus X]$ in a uniform manner.
Therefore, there is a pair of Turing reductions witnessing $T[X]\equiv_TS[T\oplus X]$ which does not depend on $X$.
Thus, since $f$ is $(\equiv_T,\equiv_m)$-UI, there is a computable function $\Psi$ such that
\[f(T[X])(n)\leq_\mathcal{Q}f(S[T\oplus X])(\Psi(n))\]
for any $n\in\om$.
Consequently, we have that
\[\Afr(f\circ T)(n\fr X)=\Afr(f)(n\fr T[X])\leq_\mathcal{Q}\Afr(f)(\Psi(n)\fr S[T\oplus X])=\Afr(f\circ S)(\Psi(n)\fr T\oplus X).\]

For (\ref{UI-uppt-res, part 3}), we only need to show that $\Afr(f\circ T)\geq_w\Afr(f)$, as the other reduction follows from (\ref{UI-uppt-res, part 1}).
There is an index that we can use to compute $X$ from $T[X]$ for all $X$, and hence there is a computable function $\psi$ witnessing $f(X)\leq_m f(T[X])$ for all $X$.
We then have 
\[
\Afr(f)(n\fr X) = f(X)(n)=f(T[X])(\psi(n))=\Afr(f\circ T)(\psi(n)\fr X).
\]

For (\ref{UI-uppt-res, part 2}), assume that $X\geq_TT$.
Then $X\equiv_TT[X]$, so let $(i,j)$ be a pair of indices witnessing this.
Let $u$ witness that $f$ is $(\equiv_T,\equiv_m)$-UI.
Then we have $f(X)(n)=f(T[X])(\Phi_{u(i,j)}(n))$ and $f(T[X])(n)=f(X)(\Phi_{u(j,i)}(n))$ for any $n\in\om$.
This clearly implies that $f\circ T\equiv\mcone f$.
\end{proof}

Since the $\Q$-Wadge degrees are well-founded (actually better-quasi-ordered by Fact \ref{fact:Wadge-bqo}), by Lemma \ref{lem:UI-uppt-res}, (\ref{UI-uppt-res, part 1}) we get that there is a $C$ such that the $\Q$-Wadge degree of $\Afr(f\circ T)$ is the same for all u.p.p.\ trees $T\geqt C$.

\begin{definition}
We say that $f\colon\om^\om\to\mathcal{Q}^\om$ is {\em $\Afr$-minimal} if for all u.p.p.\ trees $T$, $\Afr(f\circ T)\equiv_w\Afr(f)$.
\end{definition}

It follows from the lemma above that every $(\equiv_T,\equiv_m)$-UI function is $\equiv\mcone$-equivalent to an $\Afr$-minimal one, and that if $f$ is $(\leq_T,\leq_m)$-UOP, it is $\Afr$-minimal already.
We can thus concentrate only on the $\Afr$-minimal $(\equiv_T,\equiv_m)$-UI functions.

\begin{lemma}  \label{lem: Afr well defined}
Let $f,g\colon\om^\om\to\mathcal{Q}^\om$ be $(\equiv_T,\equiv_m)$-UI, {\em $\Afr$-minimal} functions.
Then  $f\leq\mcone g$ implies $\Afr(f)\leq_w \Afr(g)$.
\end{lemma}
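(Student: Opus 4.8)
The plan is to unwind the definition of $f\leq\mcone g$ and feed the resulting reduction into the uncurrying operator $\Afr$, using $\Afr$-minimality to absorb the oracle. Suppose $f\leq\mcone g$, witnessed by an oracle $C$: for every $X\geqt C$ there is a $C$-computable function $\Phi^C_{e}$ (with $e$ possibly depending on $X$ — but by the uniformity built into $(\equiv_T,\equiv_m)$-UI functions we may fix a single index $e$, since $f(X)\leq^C_m g(X)$ can be obtained uniformly from an index for $X\geqt C$) such that $f(X)(n)\leq_\Q g(X)(\Phi^C_e(n))$ for all $n$. First I would replace the cone base $C$ by a u.p.p.\ tree $T$ with $T\geqt C$; this is harmless because for every $X\in[T]$ we have $X\geqt C$, and moreover $X\equiv_T T[X]$ uniformly, so on paths of $T$ the oracle $C$ is computable from $X$ itself.

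Next I would compute: for $X\in\om^\om$ and $n\in\om$,
\[
\Afr(f\circ T)(n\fr X)=f(T[X])(n)\leq_\Q g(T[X])(\Phi^{C}_{e}(n))=\Afr(g\circ T)(\Phi^{C}_{e}(n)\fr X),
\]
where the point is that $C\leq_T T[X]$ uniformly, so $n\mapsto \Phi^C_e(n)$ is computable from $T[X]$, hence the map $n\fr X\mapsto \Phi^C_e(n)\fr X$ is continuous in $n\fr X$ (it only reads finitely much of $X$ to recover enough of $T[X]$, then applies $\Phi^C_e$). This exhibits a $\Q$-Wadge reduction $\Afr(f\circ T)\leq_w\Afr(g\circ T)$. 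Finally, since $f$ and $g$ are $\Afr$-minimal, $\Afr(f\circ T)\equiv_w\Afr(f)$ and $\Afr(g\circ T)\equiv_w\Afr(g)$, so $\Afr(f)\leq_w\Afr(g)$ as desired.

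The main obstacle I anticipate is the bookkeeping around uniformity and the oracle: one must be careful that the reduction witnessing $f(X)\leq^C_m g(X)$ can be taken with an index $e$ independent of $X$ on the cone (this is where the ``$u$'' from $(\equiv_T,\equiv_m)$-UI is really used, combined with the u.p.p.\ structure $T[X]\equiv_T X$ with uniform indices), and that $C$ being $T[X]$-computable \emph{uniformly in $X\in[T]$} is what turns the $C$-computable $m$-reduction into a genuinely \emph{continuous} map on Baire space — continuity, not mere $C$-computability, is what a Wadge reduction requires. Everything else is the routine unwinding already rehearsed in the proof of Lemma~\ref{lem:UI-uppt-res}, and in fact part (\ref{UI-uppt-res, part 1}) of that lemma is essentially the template; the present lemma is the ``$\leq\mcone$ in place of $\leq_T$ on trees'' version of the same computation.
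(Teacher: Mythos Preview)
Your overall architecture matches the paper's: pass to a u.p.p.\ tree $T$, show $\Afr(f\circ T)\leq_w\Afr(g\circ T)$ via the map $n\fr X\mapsto \Phi^C_e(n)\fr X$, then invoke $\Afr$-minimality. The gap is exactly the step you flag as the ``main obstacle'': obtaining a \emph{single} index $e$ that witnesses $f(Y)\leq^C_m g(Y)$ for all $Y$ in the tree. Your proposed fix --- use the UI witness $u$ together with the uniform equivalence $T[X]\equiv_T X$ --- does not work. The function $u$ only transports $m$-reductions \emph{within a Turing degree}: from $X\equiv_T Y$ via $(i,j)$ you get $f(X)\equiv_m f(Y)$ via $u_f(i,j)$, and similarly for $g$. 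But distinct paths $T[X]$, $T[X']$ lie in distinct Turing degrees, so UI gives you no way to carry an index for $f(T[X])\leq^C_m g(T[X])$ over to $T[X']$. That all paths compute $C$ via the same fixed index is irrelevant here; the definition of $\leq\mcone$ promises only that \emph{some} $e$ works for each $X$, with no uniformity in $X$ (or in an index for $X\geq_T C$) at all, and nothing in the UI hypothesis manufactures such uniformity across degrees.

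The paper closes this gap with Martin's Lemma: partition the cone above $C$ according to the least $e$ witnessing $f(X)\leq^C_m g(X)$; one cell of this countable partition contains all paths through some u.p.p.\ tree $T$, and on that tree the index $e$ is constant. This is a genuine use of determinacy that your appeal to UI cannot replace. Once $e$ is fixed, incidentally, your continuity worry evaporates: $\Phi^C_e$ is then a single fixed function $\om\to\om$, so $n\fr X\mapsto \Phi^C_e(n)\fr X$ is continuous outright, and there is no need to recover $C$ from $T[X]$.
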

\begin{proof}
There is a $X\in\om^\om$ such that, for each $X\geqt C$, there is some $e$ such that $\Phi^C_e$ is a many-one reduction $f(X)\leq^C_m g(X)$.
We then use Martin's Lemma (see \cite[Lemma 3.5]{MarSlaSte}), saying that if $\om^\om$ is partitioned into countably many subsets, then one of them contains all infinite paths through a u.p.p.\ tree, to obtain an index $e$ and a u.p.p.\ tree $T$ such that, for all $Y\in [T]$, $f(Y)\leq^C_m g(Y)$ via $\Phi_e$.
We thus get that, for all $X\in \om^\om$ and $n\in\om$, $f(T[X])(n)= g(T[X])(\Phi^C_e(n))$, and hence that $\Afr(f\circ T)\leq_w \Afr(g\circ T)$.
Since both $f$ and $g$ are $\Afr$-minimal, this implies $\Afr(f)\leq_w \Afr(g)$.
\end{proof}

We now have a well-defined map from the $\leq\mcone$-degrees of $(\equiv_T,\equiv_m)$-UI functions to the $\Q$-Wadge degrees: given a $(\equiv_T,\equiv_m)$-UI function $f$, let $g$ be a $(\equiv_T,\equiv_m)$-UI function that is $\Afr$-minimal and $\equiv\mcone$-equivalent to $f$, and let the image of the $\equiv\mcone$-degree of $f$ be the $\Q$-Wadge degree of $\Afr(g)$.
To show that this map is an isomorphism, i.e., Theorem \ref{thm: main for Q}, and to also get Theorem \ref{thm: ui to upo for Q}, we will show the following two propositions:

\begin{proposition}\label{prop: onto}
For every $\Q$-Wadge degree $\A$, there is a $(\leq_T,\leq_m)$-UOP function $g$ such that $\Afr(g)\equiv_w \A$.
\end{proposition}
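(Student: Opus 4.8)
The plan is to build, for each $\Q$-Wadge degree $\A$, a $(\leq_T,\leq_m)$-UOP function $g$ by recursion on the $\Q$-Wadge rank of $\A$, using Facts~\ref{fact:sji-deg-pres} and~\ref{fact:sjr-equal-sd} to reduce the construction to the two basic cases: $\A$ $\sigma$-join-reducible (equivalently self-dual) and $\A$ $\sigma$-join-irreducible (equivalently non-self-dual). In the $\sigma$-join-reducible case, Fact~\ref{fact:sji-deg-pres} writes $\A\equiv_w\bigoplus_{n}\A_n$ with each $\A_n$ $\sigma$-join-irreducible and $\A_n<_w\A$; by the induction hypothesis we have $(\leq_T,\leq_m)$-UOP functions $g_n$ with $\Afr(g_n)\equiv_w\A_n$, and we would set $g(X)$ to be, roughly, the disjoint union over $n$ of the $g_n(X)$'s, coded so that $\Afr(g)(n\fr m\fr X)=g_n(X)(m)$ recovers $\bigoplus_n\A_n$. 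One must check that this $g$ remains $(\leq_T,\leq_m)$-UOP: the uniformity index $u$ for $g$ is obtained by running the $u_n$'s in parallel, and the fact that we only need \emph{finitely much information} to locate the right block keeps things computable — but care is needed because the $g_n$ are infinitely many, so the uniform bound must not depend on $n$, which it does not since a reduction $X\leq_T Y$ via $e$ is pushed to $g_n(X)\leq_m g_n(Y)$ via $u(e)$ uniformly in $n$.

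The substantive case is $\A$ $\sigma$-join-irreducible and \emph{non-self-dual}. Here I expect to mimic the classical "jump" step. By Fact~\ref{fact:sji-deg-pres} applied in the form $\A\leq_w\A\upto[X\upto n]$ for all $n$ and some fixed $X$, and by non-self-duality, $\A$ sits strictly above all its proper restrictions, and the canonical way to realize such a degree is via a "completeness over a countable join of smaller degrees" operation analogous to taking a complete $\Sigma^0_1$-set over an oracle, i.e.\ a jump. Concretely, write $\A$ (up to Wadge equivalence) as obtained from a sequence of strictly smaller degrees by the appropriate $\Q$-Wadge jump operation described in \cite{KMwadge}; take the inductively-given UOP functions for those smaller degrees, and let $g(X)$ be the corresponding \emph{effective} jump of their join, relativized to $X$. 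The point is that the jump operation on subsets of $\om$ (e.g.\ $A\mapsto A'$, or more precisely the $\Q$-valued analogue) is uniformly degree-invariant from $\leq_T$ to $\leq_m$ — this is exactly the observation that "relativization works," made rigorous: if $X\leq_T Y$ via $e$ then the $X$-jump many-one-reduces to the $Y$-jump via a computable function of $e$. Then one verifies $\Afr(g)\equiv_w\A$ by unwinding the definitions of $\Afr$ and of the $\Q$-Wadge jump, matching the $n$-th column of $g(X)$ with the $n$-th "approximation level" of $\A$.

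The base case is $\A$ clopen (rank $0$): then $\A$ takes finitely many $\Q$-values on a clopen partition, so $g$ can be taken constant — $g(X)=A$ for a fixed $A\in\Q^\om$ whose columns realize the finitely many values of $\A$ — which is trivially $(\leq_T,\leq_m)$-UOP, and $\Afr(g)=\A$ up to $\equiv_w$ since a clopen function is Wadge-equivalent to any coding of its value set. I would also handle $\A$ open non-clopen as the first interesting instance of the jump step, recovering the complete c.e.\ set as promised in the introduction, which serves as a sanity check on the general construction.

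The main obstacle I anticipate is the non-self-dual step: getting the \emph{right} uniform presentation of a $\sigma$-join-irreducible non-self-dual $\Q$-Wadge degree as an effective jump of smaller degrees, and checking that this jump operation is genuinely $(\leq_T,\leq_m)$-\emph{UOP} (with a single computable $u$ on indices, and preserving degree-invariance through the transfinite recursion) — rather than merely $(\equiv_T,\equiv_m)$-UI. This is where the detailed structure theory of $\Q$-Wadge degrees from \cite{KMwadge} and the bookkeeping of uniformities interact most delicately, and where I'd expect the bulk of the section's length to go. A secondary subtlety is ensuring that the recursion is well-founded and that at limit-rank degrees the $\sigma$-join-reducible clause and the $\sigma$-join-irreducible clause patch together consistently, which should follow from Fact~\ref{fact:sjr-equal-sd} identifying the self-dual degrees precisely with the $\sigma$-join-reducible ones.
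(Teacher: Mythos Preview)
Your proposal is a genuinely different route from the paper's, and the non-self-dual step as you've sketched it has a real gap.

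The paper does \emph{not} proceed by transfinite recursion on Wadge rank, and does \emph{not} invoke any structure theory of $\Q$-Wadge degrees from \cite{KMwadge}. Instead it gives a direct, one-shot construction. Define $\Bfr^C(\A)(X)(e)=\A(\prim_e(C\oplus X))$, where $(\prim_e)_e$ lists the primitive recursive functionals. It is immediate that $\Afr(\Bfr^C(\A))\equiv_w\A$ for every $C$. The whole content is then to find a $C$ making $\Bfr^C(\A)$ $(\leq_T,\leq_m)$-UOP when $\A$ is non-self-dual, and this is done by a single determinacy argument: for each Turing index $d$, play a diagonal Wadge-type game whose Player~I winning strategy (guaranteed by non-self-duality of $\A$) is a Lipschitz map $\hat\theta_d$ with $\A(\prim_e(C\oplus\Phi_d(Y)))\leq_\Q\A(\hat\theta_d(e,C,Y))$ whenever $\Phi_d(Y)$ is total; then set $C=\bigoplus_d\hat\theta_d$, so that $\hat\theta_d(e,C,Y)=\prim_{q(d,e)}(C\oplus Y)$ for a computable $q$, and $q(d,\cdot)$ is the required many-one reduction. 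The self-dual case is handled exactly as you suggest, by decomposing $\A\equiv_w\bigoplus_n\A_n$ into non-self-dual pieces and taking $\bigoplus_n\Bfr^C(\A_n)$ --- but crucially this is a \emph{single} appeal to Fact~\ref{fact:sjr-equal-sd}, not an inductive step, and the uniformity in $n$ of the UOP witness comes for free from the explicit construction of $\hat\theta_d$, not from transfinite bookkeeping.

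Your approach, by contrast, would need the full description of non-self-dual $\Q$-Wadge degrees as iterated jumps over smaller degrees. That description in \cite{KMwadge} is for \emph{Borel} functions; pushing it through all of AD$^+$ is not available off the shelf, and even granting it, carrying a single computable $u$ uniformly through a recursion of length up to $\Theta$ (with a fresh choice of presentation at every non-self-dual stage) is exactly the kind of dependent-choice bookkeeping the paper's direct argument is designed to avoid. So the ``main obstacle'' you anticipate is not merely delicate --- it is the entire proof, and the paper replaces it with a two-page game argument that uses nothing about $\A$ beyond non-self-duality.
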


\begin{remark}\label{rmk: standard form}
Let us say that $g$ {\em is in standard form} if either $\Afr(g)$ is non-self-dual, or it is of the form $\bigoplus g_n$, where $\Afr(g_n)$ is non-self-dual  for each $n$, where we define $\bigoplus_ng_n\colon\om^\om\to\Q^\om$ by $(\bigoplus_ng_n)(X)(\langle m,k\rangle)=g_m(X)(k)$.
It will follow from the proof of Proposition \ref{prop: onto} in the next section that we can assume $g$  is of the form $\bigoplus g_n$, and hence is in standard form.
We can then use Lemma \ref{lem:UI-uppt-res} to find an oracle $C$ such that, for all u.p.p.\ trees $S$, $\Afr(g_n\circ S)$ has minimal Wadge degree, and hence each of the $g_n$'s is $\Afr$-minimal.
\end{remark}

\begin{proposition}\label{prop: embeddability}
Let $f,g\colon\om^\om\to\mathcal{Q}^\om$ be $(\equiv_T,\equiv_m)$-UI, {\em $\Afr$-minimal} functions.
Then $f\leq\mcone g$ if and only if $\Afr(f)\leq_w \Afr(g)$.
\end{proposition}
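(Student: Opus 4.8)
The forward implication, $f\leq\mcone g\Rightarrow\Afr(f)\leq_w\Afr(g)$, is exactly Lemma~\ref{lem: Afr well defined}, so the entire content is the converse. Assume $\Afr(f)\leq_w\Afr(g)$, fix a continuous $\theta$ witnessing it and an oracle $C_0$ computing $\theta$, and aim to find an oracle $C\geqt C_0$ with $f(X)\leq_m^C g(X)$ for all $X\geqt C$. The plan is to reduce this to producing a Wadge reduction of a rigid shape: call a Wadge reduction of $\Afr(f\circ T)$ into $\Afr(g\circ T)$, for a u.p.p.\ tree $T$, a \emph{product-form} reduction if it is of the shape $n\fr W\mapsto r(n)\fr W$ for a $T$-computable $r\colon\om\to\om$ --- equivalently, if $f(T[W])(n)\leq_\Q g(T[W])(r(n))$ for all $n$ and $W$, i.e.\ $r$ is one $T$-computable many-one reduction of $f(T[W])$ into $g(T[W])$ valid uniformly in $W$. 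It suffices to find a single u.p.p.\ tree $T\geqt C_0$ admitting such an $r$: for each $Y\geqt T$ one has $Y\teq T[Y]$ by the main property of u.p.p.\ trees, and composing $r$ with the many-one reductions that the uniform invariance of $f$ and $g$ extract from this equivalence gives a $T$-computable many-one reduction of $f(Y)$ into $g(Y)$, so $f\leq\mcone g$ with $C=T$; and replacing $f,g$ by $f\circ T,g\circ T$ changes neither $\Afr$-degree, by $\Afr$-minimality.

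The reason to demand \emph{product} form is this: a $C$-computable many-one reduction is not allowed to let the index it outputs depend on the oracle, whereas the given $\theta$ sends $n\fr X$ to some $m_X(n)\fr V_X(n)$ in which both the output column $m_X(n)$ and the tail $V_X(n)$ depend on $X$. The tail-dependence is the benign kind: composing $\theta$ with a Wadge reduction of $\Afr(g)$ back into $\Afr(g\circ S)$ for a suitable u.p.p.\ tree $S$ --- available by $\Afr$-minimality of $g$ --- rewrites $V_X(n)$ in the form $S[\cdot]$, which on the cone where $S$ lies below the input is Turing-equivalent to the input with uniform indices, so the uniform invariance of $g$ can swallow it. It is the dependence of the output column $m_X(n)$ on $X$ that has to be destroyed, and that is where the real work lies.

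To destroy it I would use Martin's Lemma (as in the proof of Lemma~\ref{lem: Afr well defined}) both to select $T$ and to make the relevant features of $\theta$ (which column it uses, and its continuity moduli) uniform over $[T]$, and then argue by induction on the $\Q$-Wadge degree of $\Afr(f)$, splitting on whether $\Afr(f)$ is self-dual; by Fact~\ref{fact:sjr-equal-sd} this is the same as splitting on $\sigma$-join-reducibility, making Fact~\ref{fact:sji-deg-pres} available. In the $\sigma$-join-reducible case, $\Afr(f)$ decomposes (Fact~\ref{fact:sji-deg-pres}) as Wadge-equivalent to a countable join $\bigoplus_n\A_n$ of $\sigma$-join-irreducible functions of strictly smaller Wadge degree, realized by $\Afr$-minimal $(\leq_T,\leq_m)$-UOP functions via Proposition~\ref{prop: onto}; the inductive hypothesis treats the pieces, and one reassembles them --- on a common u.p.p.\ tree --- into a product-form reduction of $\Afr(f\circ T)$ into $\Afr(g\circ T)$. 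In the $\sigma$-join-irreducible case, which is the base case, fix (Fact~\ref{fact:sji-deg-pres}) a point $X^*$ with $\Afr(f)\leq_w\Afr(f)\upto[X^*\upto k]$ for every $k$, witnessed by reductions $\iota_k$ with image in $[X^*\upto k]$; for $k$ large the composite $\theta\circ\iota_k$ has, by continuity of $\theta$ near $X^*$, an output column that no longer depends on the input, and iterating this absorbing step along the nested cylinders $[X^*\upto k]$ turns $\theta$ into a pointed, hence product-form, reduction on the range of $T$.

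The main obstacle is precisely this last step --- carrying out the $\sigma$-join-irreducible base case, i.e.\ pumping $\theta$ into a pointed/product-form reduction by iterating it along the cylinders of $X^*$ --- together with the (routine but delicate) bookkeeping needed to keep all the computations uniform as one moves among the various u.p.p.\ trees and oracles.
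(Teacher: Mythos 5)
The forward direction, correctly attributed to Lemma~\ref{lem: Afr well defined}, is fine. The converse is where your proposal diverges from the paper and where the real gaps appear.

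Your ``product-form'' target, $n\fr W\mapsto r(n)\fr W$ on a u.p.p.\ tree $T$, is indeed a sufficient reformulation of $f\leq\mcone g$ (a posteriori, by Martin's Lemma). But your route to it does not close. In the $\sigma$-join-irreducible base case, composing $\theta$ with the reductions $\iota_k$ into the cylinders $[X^*\upto k]$ does, by continuity of $\theta$, give a reduction $\theta\circ\iota_k$ with a constant output column $m^*$ --- this is essentially the trick of Lemma~\ref{lem:UI-NSD-mc}, applied to $\Afr(f)$ rather than to $\Afr(g)$ as in the paper, and it is a legitimate variant. But a constant-column reduction is not a product-form reduction: the tail $(\theta\circ\iota_k)(n\fr X)^-$ still varies with $X$, and nothing in ``iterating the absorbing step along the nested cylinders'' forces it to become $T[W]$ or Turing-equivalent to the input with uniform indices. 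A Wadge reduction need not encode its input into its output, so the Turing-equivalence your plan requires simply isn't there. What a constant-column reduction does give is a win for Player II in the game $\Gm$, and from such a win one can conclude $f\leq\mcone g$ \emph{provided $g$ is $(\leq_T,\leq_m)$-UOP} (Lemma~\ref{lem:embedding3}): the uniform $\leq_m$-monotonicity of $g$ absorbs the varying tail. You half-see this (``the uniform invariance of $g$ can swallow it''), but you never reduce to the UOP case, and for a merely $(\equiv_T,\equiv_m)$-UI $g$ the tail cannot be swallowed without also knowing that the tail \emph{computes} the input --- the paper handles that with the modified game $\Gtm$, the Recursion Theorem trick of Lemma~\ref{lem: the lip and mod m games}, and the ${\sf pass}$-removal conversion of Lemma~\ref{lem: the m and lip games}; all of this is absent from your sketch. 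The paper's actual route is to first replace $g$ by a UOP standard-form surrogate $h$ via Proposition~\ref{prop: onto}, then chain Lemma~\ref{lem: embed g UOP} ($f$ vs.\ $h$) and Lemma~\ref{lem: embed f UOP} ($h$ vs.\ $g$) through transitivity of $\leq\mcone$; you need an analogous reduction.

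Your inductive step has a structural circularity. You decompose $\Afr(f)\equiv_w\bigoplus_n\A_n$, realize the $\A_n$ as $\Afr(f_n)$, and apply the inductive hypothesis to get $f_n\leq\mcone g$. Fine. But reassembling requires $f\leq\mcone\bigoplus_n f_n$, and $\Afr(\bigoplus_n f_n)\equiv_w\Afr(f)$ lies at the \emph{same} Wadge rank, so the inductive hypothesis does not apply; you would be assuming exactly the instance you are trying to prove. (Also, combining the countably many reductions $f_n\leq\mcone g$ into $\bigoplus_n f_n\leq\mcone g$ needs uniformity in $n$ of the witnessing indices, which you do not arrange.) The paper avoids this by putting the decomposition on the $g$-side: when $\Afr(g)$ is $\sigma$-join-reducible and $g=\bigoplus g_i$ is in standard form, the real obstacle is that the Wadge reduction may have to select \emph{which} $g_i$ to use depending on the input, whereas in $\Gm$ Player II must commit to a column up front. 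Resolving this is the heart of the argument (Lemma~\ref{lem:UI-sji-win}): one covers the index set by finitely many directed sets via Erd\H{o}s--Tarski, forms the closed sets $\F_{m,n}$ of reals on which no directed block suffices, and then invokes the extension of the Cenzer--Downey--Jockusch--Shore thin $\Pi^0_1$-class theorem (Lemma~\ref{lem:CDJS}) to show that either some $\F_{m,n}$ is almost thin --- in which case one can push a u.p.p.\ tree into the complement and commit to a single column --- or else Player I actually wins $G_w(f,g)$, a contradiction. None of this machinery (Erd\H{o}s--Tarski, thin/almost-thin $\Pi^0_1$ classes, the compactness step) appears in your proposal, and I do not see how to bypass it.
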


We will prove Proposition \ref{prop: onto} and Remark \ref{rmk: standard form} in Section  \ref{sec: surjectivity}.
We will prove Proposition \ref{prop: embeddability} in Sections \ref{sec: games} and \ref{sec: wadge to m games}.

\section{Surjectivity} \label{sec: surjectivity}

The next step is to show that $\Afr$ is onto.
We devote this subsection to proving Proposition \ref{prop: onto}.

Given an oracle $C\in\om^\om$, a function $p\colon\om\to\om$ is said to be {\em $C$-primitive recursive} if it can be obtained by using the usual axioms of primitive recursive functions, including the function $n\mapsto C(n)$ in the list of initial functions.
A {\em primitive recursive functional} is a function $P \colon \om^\om\to\om^\om$ such that $P(C)$ is $C$-primitive recursive uniformly in $C$.
Let $(\prim_e)_{e\in\omega}$ be an effective list of all primitive recursive functionals from $\om^\om$ into $\om^\om$, so that $\prim\colon (e,X)\mapsto\prim_e(X)$ is computable.
We now introduce the following operation $\Bfr$ that will almost work as an inverse of $\Afr$.

\begin{definition}
Given $\A \colon \om^\om\to\mathcal{Q}$ and $C\in \om^\om$, let $\Bfr^C(\A)\colon \om^\om\to\mathcal{Q}^\om$ be defined by
\[
\Bfr^C(\A)(X)(e)= \A( \prim_e(C\oplus X)).
\]
\end{definition}

We will show that, for some large enough $C$, $\Bfr^C(\A)$ is $(\leq_T,\leq_m)$-UOP and that the $\equiv\mcone$-degree of $\Bfr^C(\A)$ is independent of $C$.
We start by showing that $\Bfr^C(\A)$ is always an inverse of $\A$, even if $\Bfr^C(\A)$ is not $(\equiv_T,\equiv_m)$-UI.

\begin{lemma}\label{lem:onto1}
For any $\A \colon \om^\om\to\mathcal{Q}$ and $C\in \om^\om$, we have $\Afr(\Bfr^C(\mathcal{A}))\equiv_w\mathcal{A}$.
\end{lemma}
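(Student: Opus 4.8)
The plan is to unwind both sides of the claimed Wadge equivalence $\Afr(\Bfr^C(\A)) \equiv_w \A$ directly from the definitions and produce explicit continuous reductions in each direction. Recall $\Bfr^C(\A)(X)(e) = \A(\prim_e(C\oplus X))$, so by the definition of $\Afr$ we have
\[
\Afr(\Bfr^C(\A))(e \fr X) = \Bfr^C(\A)(X)(e) = \A(\prim_e(C\oplus X)).
\]
Thus the content of the lemma is: the function $Y \mapsto \Afr(\Bfr^C(\A))(Y)$, which on input $e\fr X$ returns $\A(\prim_e(C\oplus X))$, is Wadge-equivalent to $\A$ itself.

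For the reduction $\Afr(\Bfr^C(\A)) \leq_w \A$, I would exhibit the continuous function $\theta\colon \om^\om \to \om^\om$ sending $e\fr X \mapsto \prim_e(C\oplus X)$. This is continuous because $\prim\colon(e,Z)\mapsto \prim_e(Z)$ is computable (hence continuous) and the map $e\fr X \mapsto (e, C\oplus X)$ is continuous (indeed $C$ is a fixed parameter). Then by the display above, $\Afr(\Bfr^C(\A))(e\fr X) = \A(\theta(e\fr X))$, so $\theta$ witnesses $\Afr(\Bfr^C(\A)) \leq_w \A$ (here using that every element of $\om^\om$ has the form $e\fr X$).

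For the converse reduction $\A \leq_w \Afr(\Bfr^C(\A))$, I would pick an index $e_0$ such that $\prim_{e_0}(C\oplus X) = X$ for all $X$ — i.e., a primitive recursive functional that ignores the oracle side $C$ and copies its argument $X$. Such an $e_0$ exists since the identity functional is primitive recursive. Then the continuous map $X \mapsto e_0 \fr X$ satisfies
\[
\Afr(\Bfr^C(\A))(e_0 \fr X) = \A(\prim_{e_0}(C\oplus X)) = \A(X),
\]
which is exactly a Wadge reduction of $\A$ to $\Afr(\Bfr^C(\A))$.

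There is no real obstacle here; the only point requiring a moment's care is making sure the conventions line up — namely that $\Afr$ is genuinely the "uncurrying" so that $\Afr(\Bfr^C(\A))$ evaluated at $e\fr X$ unpacks to $\Bfr^C(\A)(X)(e)$, and that one has at hand a primitive recursive functional realizing the projection $(C\oplus X)\mapsto X$ so that the second reduction goes through. Both are immediate from the definitions, so the lemma follows.
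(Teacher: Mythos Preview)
Your proof is correct and essentially identical to the paper's: both unfold the definition to $\Afr(\Bfr^C(\A))(e\fr X)=\A(\prim_e(C\oplus X))$, use the continuity of $(e,X)\mapsto\prim_e(C\oplus X)$ for one direction, and use a fixed index $i$ with $\prim_i(C\oplus X)=X$ together with the map $X\mapsto i\fr X$ for the other.
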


\begin{proof}
Note that $\Afr(\Bfr^C(\mathcal{A}))(e\fr X)=\mathcal{A}(\prim_e(C\oplus X))$.
Let $i$ be an index of the function $C\oplus X\mapsto  X$, that is, $X=\prim_i(C\oplus X)$.
Then, given $X$, one can easily see that $\mathcal{A}(X)=\mathcal{A}(\prim_i(C\oplus X))=\Afr(\Bfr^C(\mathcal{A}))(i\fr X)$.
Thus, $\A\leq_w\Afr(\Bfr^C(\mathcal{A}))$.
For the other reduction, notice that the map $(e,X)\mapsto\prim_e(C\oplus X)$ is continuous, which witnesses that $\Afr(\Bfr^C(\mathcal{A}))\leq_w\mathcal{A}$.
\end{proof}

The following lemma shows that, when $\Bfr^C(\A)$ is $(\leq_T,\leq_m)$-UOP, $\Bfr^C(\A)$ always gives us the same function up to $\equiv\mcone$, independently of the oracle $C$.

\begin{lemma}\label{lem:onto2}
Let $\A \colon \om^\om\to\mathcal{Q}$, and $C,D\in\om^\om$.
If $\Bfr^C(\A)$ and $\Bfr^D(\A)$ are $(\leq_T,\leq_m)$-UOP, then $\Bfr^C(\A) \equiv\mcone \Bfr^D(\A)$.
\end{lemma}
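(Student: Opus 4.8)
\textbf{Plan for the proof of Lemma \ref{lem:onto2}.}
The idea is to exploit the fact that $\Bfr^C(\A)$ is built out of \emph{all} primitive recursive functionals, so the two oracles $C$ and $D$ can be absorbed into each other on a cone. Fix the oracle $E = C\oplus D$, and work on the cone above $E$. First I would note that, for any $X\geqt E$, we have $C\oplus X\teq X\teq D\oplus X$, with indices for these Turing equivalences that do not depend on $X$ (using only $E\leqt X$). Composing a primitive recursive functional $\prim_e$ with a fixed Turing reduction of $C\oplus X$ to $X$ (via the parameter $X$, which computes $E$, hence $C$) yields another functional that is primitive recursive in $X$ uniformly; but since $\Bfr^C(\A)$ is only required to be $(\leq_T,\leq_m)$-UOP (not literally to range over primitive recursive reparametrizations closed under composition), I would instead route through the definition of $\leq\mcone$ directly: for each index $e$ of a primitive recursive functional, the map $X\mapsto \prim_e(D\oplus X)$ is, uniformly on the cone above $E$, Turing-equivalent to some $\prim_{e'}(C\oplus X)$ — indeed, $\prim_e(D\oplus X)$ is computable from $C\oplus X$ uniformly in $e$ (via $E$), so there is a single index reduction, though the result need not itself be primitive recursive. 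This is where the $(\leq_T,\leq_m)$-UOP hypothesis on $\Bfr^C(\A)$ enters.

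Here is the cleaner route I would actually take. By symmetry it suffices to prove $\Bfr^D(\A)\leq\mcone\Bfr^C(\A)$ on the cone above $E=C\oplus D$. Given $X\geqt E$, we must produce an $E$-computable many-one reduction from $\Bfr^D(\A)(X)$ to $\Bfr^C(\A)(X)$, i.e.\ a map $\psi$ with $\A(\prim_e(D\oplus X)) \leq_\Q \A(\prim_{\psi(e)}(C\oplus X))$ for all $e$. Now, since $D\oplus X \leqt C\oplus X$ uniformly (as $X\geqt E$ computes $C$ and $D$), fix a Turing reduction index $k$ with $\Phi_k(C\oplus X) = D\oplus X$ for all $X\geqt E$. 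Then $\prim_e(D\oplus X) = \prim_e(\Phi_k(C\oplus X))$, and the map $(e,Z)\mapsto \prim_e(\Phi_k(Z))$ is a Turing functional, hence equals $\Phi_{h(e)}$ for a computable $h$. So $\prim_e(D\oplus X) = \Phi_{h(e)}(C\oplus X)$, meaning $X\geqt E$ computes $\prim_e(D\oplus X)$ from $C\oplus X$ via an index depending only on $e$. Consequently $\prim_e(D\oplus X) \leqt X$ via an index we can compute from $e$ and (a fixed index for) $X\geqt E$; since $\prim_e(D\oplus X)$ is also computable by some $\prim_{j}(C\oplus X)$ — wait, that last step is exactly the obstacle, see below — we would like to conclude $X\teq C\oplus X$ lets us realize $\Phi_{h(e)}(C\oplus X)$ as $\prim_{e'}(C\oplus X)$ for a primitive recursive $\prim_{e'}$. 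That is false in general, so instead I would use that $\Bfr^C(\A)$ is $(\leq_T,\leq_m)$-UOP: the value $\prim_e(D\oplus X)$ is $X$-computable uniformly in $e$ on the cone, but it is \emph{not} one of the coordinates of $\Bfr^C(\A)(X)$. However, what \emph{is} among the coordinates is $\prim_i(C\oplus X) = X$ for the identity-projection index $i$; and since $\Bfr^C(\A)$ is $(\leq_T,\leq_m)$-UOP, feeding $\prim_e(D\oplus X) \leqt X = \prim_i(C\oplus X)$ through the uniformity function $u$ of $\Bfr^C(\A)$ yields a computable $\psi$ with $\A(\prim_e(D\oplus X)) = \A(\prim_i(C\oplus X))$ mapped appropriately — more precisely, $\Bfr^C(\A)$ being $(\leq_T,\leq_m)$-UOP means $\Bfr^C(\A)(Y)\leq_m \Bfr^C(\A)(Z)$ via $u(\text{red.\ index})$ whenever $Y\leqt Z$; applying this with $Y = \prim_e(D\oplus X)$ and $Z = X$ (both $\geqt$ nothing problematic), and using $\Afr(\Bfr^C(\A))\equiv_w\A$ from Lemma \ref{lem:onto1} to identify the relevant coordinate values, produces exactly the $E$-computable reduction witnessing $\Bfr^D(\A)(X)\leq_m^E \Bfr^C(\A)(X)$.

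So the structure is: (1) reduce to the cone above $E=C\oplus D$ and to one direction by symmetry; (2) use uniform pointedness of the cone to get that $\prim_e(D\oplus X)$ is $X$-computable uniformly in $e$, hence $\leqt X$ via a computably-from-$e$ index; (3) apply the $(\leq_T,\leq_m)$-UOP property of $\Bfr^C(\A)$ to the reductions $\prim_e(D\oplus X)\leqt X$, noting $X = \prim_i(C\oplus X)$ is coordinate $i$ of $\Bfr^C(\A)(X)$, to obtain a computable $\psi$ with $\Bfr^D(\A)(X)(e) = \A(\prim_e(D\oplus X)) \leq_\Q \A(\prim_{\psi(e)}(C\oplus X)) = \Bfr^C(\A)(X)(\psi(e))$, all uniformly; (4) conclude $\Bfr^D(\A)\leq\mcone \Bfr^C(\A)$, and symmetrically the reverse, hence $\equiv\mcone$.

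\textbf{Main obstacle.} The delicate point is step (3): $\prim_e(D\oplus X)$ is Turing-below $X$ uniformly, but it is \emph{not literally} one of the coordinate-inputs of $\Bfr^C(\A)(X)$ (those are $\prim_{e'}(C\oplus X)$, i.e.\ primitive-recursive-in-$(C\oplus X)$, and composition with an arbitrary Turing reduction need not stay primitive recursive). The whole point of requiring $\Bfr^C(\A)$ to be $(\leq_T,\leq_m)$-UOP — rather than trying to exhibit a literal primitive recursive reindexing — is to bridge exactly this gap: $(\leq_T,\leq_m)$-UOP lets an \emph{arbitrary} uniform Turing reduction $\prim_e(D\oplus X)\leqt X$ be converted into a uniform many-one reduction between the corresponding $\Bfr^C(\A)$-values, and I will need to check that the index $i$ of the projection $\prim_i(C\oplus X)=X$ combined with Lemma \ref{lem:onto1} correctly identifies the target coordinate. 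I expect everything else to be routine bookkeeping with indices on the cone.
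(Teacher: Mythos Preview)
Your approach works, but the paper takes a shorter route that sidesteps the obstacle you spend most of your plan worrying about. Rather than applying the UOP property of $\Bfr^C(\A)$ coordinate-by-coordinate to the reductions $\prim_e(D\oplus X)\leqt X$ and then using the projection index $i$ (with $\prim_i(C\oplus Z)=Z$) to recognize $\A(\prim_e(D\oplus X))$ as the $i$-th coordinate of $\Bfr^C(\A)(\prim_e(D\oplus X))$, the paper simply observes that $\prim_e(C\oplus X)=\prim_{v(e)}(D\oplus(C\oplus X))$ for a computable $v$: projecting $D\oplus(C\oplus X)$ onto $C\oplus X$ and then applying $\prim_e$ is itself primitive recursive. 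This single observation already gives a many-one reduction $\Bfr^C(\A)(X)\leq_m\Bfr^D(\A)(C\oplus X)$, uniform in $X$, with no appeal to UOP. Then one application of the UOP property of $\Bfr^D(\A)$, to the single reduction $C\oplus X\leqt X$ on the cone $X\geqt C$, yields $\Bfr^D(\A)(C\oplus X)\leq_m\Bfr^D(\A)(X)$, and composing finishes. The issue you flag --- that $\prim_e(D\oplus X)$ need not be primitive recursive in $C\oplus X$ --- never arises, because the paper reindexes in the \emph{other} direction: $C\oplus X$ is trivially primitive recursive in $D\oplus(C\oplus X)$.

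One small imprecision in your write-up: the index witnessing $\prim_e(D\oplus X)\leqt X$ is not computable from $e$ alone but also from an index for $D\leqt X$, which varies with $X$. This causes no actual problem, since $\leq\mcone$ allows the many-one reduction to depend on $X$, but your phrase ``computably-from-$e$ index'' overstates the uniformity.
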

\begin{proof}
It suffices to show that, for any $X\geq_TC\oplus D$, $\Bfr^C(\mathcal{A})(X)\leq_m\Bfr^D(\mathcal{A})(X)$ holds.
Let $v$ be such that $\prim_e(C\oplus X) = \prim_{v(e)}(D\oplus C\oplus X)$.
Note that $v$ gives us a many-one reduction
\[
\Bfr^C(\A)(X) \leq_m \Bfr^D(\A)(C\oplus X).
\]
For $X\geqt C$, since $C \oplus X \leqt X$ and $\Bfr^D(\A)$ is $(\leq_T,\leq_m)$-UOP, we get  that 
\[
\Bfr^D(\A)(C\oplus X)\leq_m \Bfr^D(\A)(X).
\]
We thus get $\Bfr^C(\A)(X)\leq_m  \Bfr^D(\A)(X)$, as needed.
The other inequality is analogous.
\end{proof}

What is left to show that is that $\Bfr^C(\A)$ is $(\leq_T,\leq_m)$-UOP for some $C$.
We will not get exactly this --- but close enough.
We start with the case when $\A$ is not self-dual, for which we first need to prove a quick lemma.
We say that a function $\theta\colon\om^\om\to\om^\om$ is {\em Lipschitz} if $\theta(X)\upto n$ depends only on $X\upto n$ for every $X\in \om^\om$, $n\in \om$; or in other words, if $X\upto n= Y\upto n\implies \theta(X)\upto n=\theta(Y)\upto n$.

\begin{lemma}
Let $\A \colon \om^\om\to\mathcal{Q}$ be not self-dual, $\B\colon \om^\om\to\mathcal{Q}$, and $\D\subseteq\om^\om$.
If there is a continuous function $\theta\colon \D\to\om^\om$ such that $\mathcal{B}(X)\leq_\mathcal{Q}\mathcal{A}(\theta(X))$ for all $X\in\mathcal{D}$, then there is a Lipchitz $\hat{\theta}\colon \om^\om\to\om^\om$ such that $\mathcal{B}(X)\leq_\mathcal{Q}\mathcal{A}(\hat{\theta}(X))$ for all $X\in\mathcal{D}$.
\end{lemma}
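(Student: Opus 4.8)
The plan is to promote an arbitrary continuous reduction $\theta\colon\mathcal{D}\to\om^\om$ to a Lipschitz one defined on all of $\om^\om$, using non-self-duality of $\A$ to absorb the ``delays'' inherent in passing from a continuous to a Lipschitz map. Recall that $\A$ being non-self-dual means there is \emph{no} continuous $\rho$ with $\A(\rho(X))\not\leq_\Q\A(X)$ for all $X$; equivalently (using Wadge's Lemma / Fact~\ref{fact:sjr-equal-sd} and the standard self-duality game), for every continuous $\rho\colon\om^\om\to\om^\om$ there is some $X$ with $\A(\rho(X))\leq_\Q\A(X)$. The first step would be to extract from this a usable combinatorial consequence: I would consider the Lipschitz/Wadge game in which player II must, at each stage, either copy a bit toward building $\hat\theta(X)$ or ``pass'', and argue that non-self-duality of $\A$ gives player II a winning strategy in the relevant game that has \emph{bounded} (indeed, controlled) pass behavior, so that the resulting strategy is genuinely Lipschitz rather than merely continuous.

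Concretely, the key steps would be: (1) First extend $\theta$ from $\mathcal D$ to a continuous $\theta^*\colon\om^\om\to\om^\om$ — this is harmless since we only need $\B(X)\leq_\Q\A(\hat\theta(X))$ for $X\in\mathcal D$, and any continuous extension (e.g.\ along a retraction of $\om^\om$ onto the closure, or just extending the monotone map on strings arbitrarily) works; on $\mathcal D$ its values are unchanged. (2) Now I have a continuous $\theta^*$ and want a Lipschitz $\hat\theta$ with $\A(\theta^*(X))\leq_\Q\A(\hat\theta(X))$ for all $X\in\om^\om$ (which then restricts to what we need on $\mathcal D$). This is exactly the statement that in the Wadge game $G_w(\A,\A)$ where the first player plays $\theta^*(X)$ and the second builds an output $Y$, the second player has a \emph{Lipschitz} winning strategy, i.e.\ the Wadge and Lipschitz reductions to $\A$ coincide. (3) Invoke the Steel--van Wesep theorem (in the $\Q$-valued form, available here since, as the excerpt notes, Wadge self-duality and Lipschitz self-duality are equivalent for the functions under consideration, cf.\ the discussion before Fact~\ref{fact:sjr-equal-sd}): for a non-self-dual $\A$, every set/function Wadge-below $\A$ is in fact Lipschitz-below $\A$. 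Applying this to the function $X\mapsto\A(\theta^*(X))$, which is trivially Wadge-below $\A$ via $\theta^*$ itself, yields a Lipschitz $\hat\theta$ with $\A(\theta^*(X))\leq_\Q\A(\hat\theta(X))$ for all $X$. Combined with $\B(X)\leq_\Q\A(\theta^*(X))=\A(\theta(X))$ on $\mathcal D$, we get $\B(X)\leq_\Q\A(\hat\theta(X))$ on $\mathcal D$, as desired.

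I expect the main obstacle to be packaging step (2)–(3) cleanly: one must be careful that ``Wadge-below $\A$ implies Lipschitz-below $\A$'' is being applied to the composite function $\A\circ\theta^*$ rather than to $\B$ directly — applying it to $\B$ is wrong because $\B$ need only be Wadge-below $\A$ on $\mathcal D$, not everywhere — and one must check the $\Q$-valued version of non-self-duality is the right hypothesis (a non-self-dual $\Q$-valued function, in the sense of the definition just given in the excerpt, is precisely one for which the Steel--van Wesep collapse of Wadge to Lipschitz applies to everything below it; this is exactly Block's $\Q$-valued Steel--van Wesep theorem). A secondary subtlety is the extension in step (1): if $\mathcal D$ is not closed, a continuous extension to all of $\om^\om$ may fail to exist in general for maps into $\om^\om$, but since the target is a perfect Polish space one can first push $\theta$ through a retraction, or — more simply — observe that it suffices to extend the induced monotone map $\sigma\mapsto\theta(\sigma)$ on the tree of strings compatible with $\mathcal D$ to \emph{some} monotone map on $\om^{<\om}$ with infinite-length images, which is always possible and continuous; only the values on $\mathcal D$ matter for the conclusion. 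Everything else is bookkeeping, so the proof should be short once the Steel--van Wesep input is correctly invoked.
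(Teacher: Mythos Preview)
Your step (1) has a real gap: a continuous $\theta\colon\D\to\om^\om$ need not extend to a continuous $\theta^*\colon\om^\om\to\om^\om$, and neither of your proposed fixes repairs this. For a concrete failure, take $\D=\om^\om\setminus\{0^\om\}$ and let $\theta(X)(0)$ be the least $n$ with $X(n)\neq 0$ (and $\theta(X)(k)=0$ for $k>0$). Then $\theta$ is continuous on $\D$ but admits no continuous extension to $0^\om$. The monotone-map fix fails for the same reason: the induced string map sends every $0^k$ to the empty string, and since each $0^k$ lies in the tree of strings compatible with $\D$, any monotone extension agreeing there still has bounded output along the branch $0^\om$. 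The retraction idea does not help either, since $\theta$ is defined only on $\D$, not on $\overline{\D}$, and in this example $\overline{\D}=\om^\om$.

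The paper sidesteps the extension problem by folding $\D$ into the winning condition of a single Lipschitz-type game: Player I builds $X$, Player II builds $Y$, and II wins iff $Y\in\D$ and $\A(X)\not\geq_\Q\B(Y)$. A winning strategy $\Psi$ for II is Lipschitz with range contained in $\D$, so $\theta\circ\Psi$ is well-defined and (by the same transitivity step you use) witnesses self-duality of $\A$; hence I wins, and I's strategy is the desired Lipschitz $\hat\theta$. This is morally your step (3) --- the ``Wadge-below a non-self-dual $\A$ implies Lipschitz-below $\A$'' argument --- but with the crucial difference that the partial domain sits on the \emph{opponent's} side of the game, where it costs nothing: one never needs to evaluate $\theta$ outside $\D$. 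Your route tries to put the partiality on the side where $\theta$ must be applied globally, which is exactly why the extension is needed and why the argument breaks.
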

\begin{proof}
Consider the following variation of the Wadge game, which we denote by $G_{diag}(\A, \B\upto \D)$:
Players I and II choose $x_n,y_n\in\omega$ alternately, and produce $X=(x_n)_{n\in\om}$ and $Y=(y_n)_{n\in\om}$, respectively.
Player II wins if $Y\in\mathcal{D}$ and $\A(X)\not\geq_\Q\B(Y)$.
A winning strategy for II would give us a Lipchitz function $\Psi$ such that $\A(X)\not\geq_\Q\B(\Psi(X))$ for all $X\in\om^\om$.
Composing with $\theta$, we would then have that $\A(X)\not\geq_\Q\A(\theta\circ\Psi(X))$, contradicting that $\A$ is not self-dual.
Thus, Player I must have a winning strategy, which gives us a Lipchitz function $\hat{\theta}\colon \om^\om\to \om^\om$.
$\hat{\theta}$ must satisfy that, for all $X\in\D$, $\A(\hat{\theta}(X))\geq_\Q\B(X)$ as wanted. 
\end{proof}

As in the previous proof, we can always identify a winning strategy $\tau$ with a Lipchitz function $\theta_\tau$.
Moreover, $n\mapsto \tau(X\upto n)$ is $(\tau\oplus X)$-primitive recursive uniformly in $\tau\oplus X$.
In other words, there is a primitive recursive code $e$ such that, if $\tau$ defines a Lipschitz function $\theta_\tau$, then we have $\theta_\tau(X)=\prim_e(\tau\oplus X)$.

\begin{lemma}\label{lem:UOP-non-self-dual}
If $\A \colon \om^\om\to\mathcal{Q}$ is not self-dual, there exists $C$ such that $\Bfr^C(\A)$ is  $(\leq_T,\leq_m)$-UOP.
\end{lemma}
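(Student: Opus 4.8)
The plan is to fix, once and for all, an oracle $C$ that codes a winning strategy for Player~I in the game $G_{diag}(\A,\A)$ (which exists because $\A$ is not self-dual), and to show that this choice makes $\Bfr^C(\A)$ not only well-defined but $(\leq_T,\leq_m)$-UOP. The point of threading $C$ through $\Bfr^C(\A)(X)(e)=\A(\prim_e(C\oplus X))$ is that, when we want to compare $\Bfr^C(\A)(X)$ with $\Bfr^C(\A)(Y)$ for $X\leq_T Y$ via $e_0$, we need, for each primitive recursive code $e$, to produce a primitive recursive code $e'$ such that $\A(\prim_e(C\oplus X))\leq_\Q\A(\prim_{e'}(C\oplus Y))$. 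First I would reduce this to the following: given $X\leq_T Y$, find a single continuous (indeed Lipschitz, after composing with the universal strategy) map sending $\prim_e(C\oplus X)$ to some $Z$ with $\A(\prim_e(C\oplus X))\leq_\Q \A(Z)$ and with $Z$ primitive recursive in $C\oplus Y$ uniformly in $e$ and in the reduction index $e_0$.

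The key steps, in order, are: (1) Observe that $\prim_e(C\oplus X)$ is primitive recursive in $C\oplus X$, and since $X\leq_T Y$, it is computable — though not primitive recursive — in $C\oplus Y$; so we cannot directly substitute. Instead, play the Wadge-type game: the map $W\mapsto \prim_e(C\oplus X)$ where $W$ is a sufficiently fast-growing (e.g.\ ``use principle'') encoding of the computation of $X$ from $Y$ is Lipschitz in the relevant combined input, and more importantly, feeding $\prim_e(C\oplus X)$ as Player~II's moves against the Player~I strategy coded by $C$ produces a play $\theta_C(\prim_e(C\oplus X))$ with $\A(\prim_e(C\oplus X))\leq_\Q \A(\theta_C(\prim_e(C\oplus X)))$. (2) By the remark after the previous lemma, $\theta_C(Z)=\prim_{e^*}(C\oplus Z)$ for a fixed primitive recursive code $e^*$ depending only on the code of the strategy in $C$. (3) Now $\theta_C(\prim_e(C\oplus X))=\prim_{e^*}(C\oplus \prim_e(C\oplus X))$, and since $\prim_e(C\oplus X)$ is primitive recursive in $C\oplus X$, this whole thing is primitive recursive in $C\oplus X$ — hence equals $\prim_{e''}(C\oplus X)$ for a code $e''$ obtained effectively (and primitive-recursively) from $e$ and $e^*$. (4) Finally, using that $X\leq_T Y$ via $e_0$ and that the purpose of the strategy-composition is exactly to launder a merely continuous substitution into a Lipschitz one whose \emph{output} we can re-express: the composed play $\theta_C(\prim_e(C\oplus X))$, viewed as a function of $C\oplus Y$, is now primitive recursive in $C\oplus Y$ (here one uses that the strategy moves at stage $n$ only inspect finitely much of the opponent's play, so a bounded-search/primitive-recursive computation of the needed finite part of $X$ from $Y$ via $\Phi_{e_0}$ suffices — with $n$ itself bounding the search). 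Hence $\theta_C(\prim_e(C\oplus X))=\prim_{u(e,e_0)}(C\oplus Y)$ for a computable $u$, and $\Bfr^C(\A)(X)(e)=\A(\prim_e(C\oplus X))\leq_\Q \A(\prim_{u(e,e_0)}(C\oplus Y))=\Bfr^C(\A)(Y)(u(e,e_0))$, which is exactly the $(\leq_T,\leq_m)$-UOP condition with modulus $u$.

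The main obstacle I expect is step~(4): reconciling ``$X$ is only Turing-reducible to $Y$'' with ``the resulting substitution must be coded primitive-recursively.'' A naive argument would give only that the substitution is computable in $C\oplus Y$, which is not enough for $\Bfr^C$'s primitive-recursive bookkeeping. The resolution — and the reason the game $G_{diag}$ and the strategy oracle $C$ are introduced rather than just invoking non-self-duality abstractly — is that a Player~I strategy reads the opponent's play one symbol at a time, so to compute the strategy's response through length $n$ one only needs $\prim_e(C\oplus X)\upto n$, and each bit of $X$ that feeds into that initial segment can be obtained by a use-bounded (hence primitive recursive, with explicit bound depending on $n$) computation from $Y$ via $\Phi_{e_0}$; one packages this as a single primitive recursive functional of $C\oplus Y$. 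One must be slightly careful that the composition of the three primitive recursive steps (compute enough of $X$ from $Y$; compute $\prim_e(C\oplus X)$ from $C\oplus X$; run the strategy) stays primitive recursive and that the code $u(e,e_0)$ is produced computably — but this is exactly the kind of uniformity the primitive recursive functionals $(\prim_e)_{e\in\om}$ were set up to provide, via the composition axioms. Everything else is routine: Lemma~\ref{lem:onto1} already gives $\Afr(\Bfr^C(\A))\equiv_w\A$, and Lemma~\ref{lem:onto2} shows the $\equiv\mcone$-degree does not depend on $C$, so once $(\leq_T,\leq_m)$-UOP is established for one witnessing $C$ we are done.
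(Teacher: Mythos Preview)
There is a genuine gap in step~(4), and it is exactly the obstacle you flag but do not actually overcome. Your claim that ``each bit of $X$ that feeds into that initial segment can be obtained by a use-bounded (hence primitive recursive, with explicit bound depending on $n$) computation from $Y$ via $\Phi_{e_0}$'' is false: the use of a total Turing reduction $\Phi_{e_0}^Y$ is finite at each argument, but there is no primitive recursive bound on it (take $X(n)=Y(a(n))$ for $a$ the Ackermann function). Composing with your Lipschitz $\theta_C$ does not help: $\theta_C(\prim_e(C\oplus X))\upto n$ depends on $\prim_e(C\oplus X)\upto n$, which depends on $X\upto m$ for some primitive-recursively-bounded $m$, which in turn depends on $Y\upto u$ for a $u$ that is \emph{not} primitive recursively bounded in $n$. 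So the composed map $Y\mapsto\theta_C(\prim_e(C\oplus\Phi_{e_0}(Y)))$ is merely continuous, not of the form $\prim_{u(e,e_0)}(C\oplus Y)$. Note also that the game $G_{diag}(\A,\A)$ you invoke is vacuous: the identity is already a Lipschitz $\theta$ with $\A(Z)\leq_\Q\A(\theta(Z))$, so your $\theta_C$ carries no information.

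The paper's fix is to play a \emph{different} game for each reduction index $d$: apply the preceding lemma with $\B_d(e,C,Y)=\A(\prim_e(C\oplus\Phi_d(Y)))$ on the domain $\D_d=\{(e,C,Y):\Phi_d(Y)\text{ total}\}$. Player~I's winning strategy $\hat\theta_d$ is then Lipschitz \emph{in $(e,C,Y)$ directly}, not in $\prim_e(C\oplus X)$; this is what swallows the non-primitive-recursive Turing reduction $\Phi_d$ into the strategy rather than leaving it outside. One then sets $C=\bigoplus_d\hat\theta_d$ (the apparent circularity is harmless because each $\hat\theta_d$ takes $C$ as an \emph{argument}), so that $\hat\theta_d(e,C,Y)=\prim_{q(d,e)}(C\oplus Y)$ for a computable $q$, yielding the UOP witness.
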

\begin{proof}

We will construct an oracle $C\in\om^\om$ and a computable function $q\colon\om\to\om$ such that, if $X\leq_TY$ via $\Phi_d$, then $\Bfr^C(\mathcal{A})(X)\leq_m\Bfr^C(\mathcal{A})(Y)$ via $q(d)$.
Fix $p\in\mathcal{Q}$ and, for each $d\in\om$, consider the following function $\B_d\colon \om^\om\to \Q$:
\[\mathcal{B}_d(e,C,Y)=
\begin{cases}
\A(\prim_e(C\oplus\Phi_d(Y)))&\mbox{ if $\Phi_d(Y)$ is total,}\\
p&\mbox{ otherwise}.
\end{cases}
\]
Let $\D_d$ be the set of all $(e,C,Y)$ such that $\Phi_d(Y)$ is total.
The continuous function $(e,C,Y)\mapsto \prim_e(C\oplus\Phi_d(Y))$ reduces $\mathcal{B}_d$ to $\mathcal{A}$ on the domain $\mathcal{D}$.
Therefore, by the previous lemma, there is a total Lipschitz function $\hat{\theta}_d$ such that, for all $(e,C,Y)\in \D_d$, $\B_d(e,C,Y)\leq_\Q \A(\hat{\theta}(e,C,Y))$.
Let 
\[
C=\bigoplus_{d\in\om}\hat{\theta}_d.
\]
We claim that $\Bfr^C$ is $(\leq_T,\leq_m)$-UOP.
Given $d$ and $e$, one can effectively find $q(d,e)$ such that 
\[
\hat{\theta}_d(e,C,Y)=\prim_{q(d,e)}(C\oplus Y) \quad (\forall C,Y\in\om^\om).
\]

Let $X\leqt Y$ and suppose $X=\Phi_d(Y)$ for some Turing functional $\Phi_d$.
Since $\Phi_d(Y)$ is total, we then have that
\begin{multline*}
\A(\prim_e(C\oplus X))= \A(\prim_e(C\oplus\Phi_d(Y))) \\
= \mathcal{B}_d(e,C,Y) \leq_\mathcal{Q} \A(\hat{\theta}_d(e,C,Y)) = \A(\prim_{q(d,e)}(C\oplus Y)).
\end{multline*}

Consequently, whenever $X\leq_TY$ via $\Phi_d$, we have $\Bfr^C(\mathcal{A})(X)(e)\leq_\mathcal{Q}\Bfr^C(\mathcal{A})(Y)(q(d,e))$.
In other words, $\Bfr^C(\A)$ is  $(\leq_T,\leq_m)$-UOP, as desired.
\end{proof}

We are now ready to show that $\Afr$ is onto.

\begin{proof}[Proof of Proposition \ref{prop: onto}]
If $\A$ is non-self-dual, let $C$ be as in Lemma \ref{lem:UOP-non-self-dual}, and then we have that $\Bfr^C(\A)$ is $(\leq_T,\leq_m)$-UOP and, by Lemma \ref{lem:onto1}, that $\Afr(\Bfr^C(\mathcal{A}))\equiv_w \A$.

Suppose now that $\A$ is self-dual.
 By Fact \ref{fact:sjr-equal-sd}, $\A$ is $\sigma$-join-reducible, that is, there exists a sequence $\A_0,\ A_1,....$ of non-self-dual functions from $\om^\om$ to $\mathcal{Q}$ such that $\A\equiv_w\bigoplus_n\A_n$.
By Lemma \ref{lem:UOP-non-self-dual}, for each $n$, there is a $C_n\in\om^\om$ such that $\Bfr^{C_n}(\A_n)$ is $(\leq_T,\leq_m)$-UOP, and moreover, the proof of Lemma \ref{lem:UOP-non-self-dual} provides an effective way of computing the witness of the fact that $\Bfr^{C_n}(\A_n)$ is $(\leq_T,\leq_m)$-UOP from given a $n$.
Put $C=\bigoplus C_n$, and then $\Bfr^{C}(\A_n)$ is also $(\leq_T,\leq_m)$-UOP.
We claim that 
\[
\Afr(\bigoplus_n\Bfr^{C}(\A_n))  \equiv_w \A.
\]
On the one hand, we have that 
\[
\Afr(\bigoplus_n\Bfr^{C}(\A_n))(\langle m,e\rangle\fr X)=\A(m\fr \prim_e(C\oplus X)),
\]
 and on the other  that $\A(m\fr X)=\Afr(\bigoplus_n\Bfr^{C}(\A_n))(\langle m,e\rangle\fr X)$, where $e$ is such that $\prim_e(C\oplus X)=X$.

Notice that $\bigoplus_n\Bfr^{C}(\A_n)$ not only is $(\leq_T,\leq_m)$-UOP, but it is also in standard form as needed for Remark \ref{rmk: standard form}.
\end{proof}

\section{The Games and the Embedding Lemma}  \label{sec: games}

\subsection{The Wadge Game $G_w$}

Wadge \cite[Theorem B8]{Wadge83} introduced a perfect-information, infinite, two-player game, known as the {\em Wadge game}, which can be used to define  Wadge reducibility.
For $\mathcal{Q}$-valued functions $\A,\B \colon \om^\om\to\mathcal{Q}$, here is the $\mathcal{Q}$-valued version $G_w(\mathcal{A},\mathcal{B})$ of the Wadge game:
At $n$-th round of the game, Player I chooses $x_n\in\om$ and II chooses $y_n\in\om\cup\{{\sf pass}\}$ alternately (where ${\sf pass}\not\in\om$), and eventually Players I and II produce infinite sequences $X=(x_n)_{n\in\omega}$ and $Y=(y_n)_{n\in\omega}$, respectively.
We write $Y^{\sf p}$ for the result dropping all {\sf pass}es from $Y$.
We say that {\em Player II wins the game $G_w(\mathcal{A},\mathcal{B})$} if
\[\mbox{$Y^{\sf p}$ is an infinite sequence, and }\mathcal{A}(X)\leq_\mathcal{Q}\mathcal{B}(Y^{\sf p}).\]
As in Wadge \cite[Theorem B8]{Wadge83}, one can easily check that $\mathcal{A}\leq_{w}\mathcal{B}$ holds if and only if Player II wins the game $G_w(\mathcal{A},\mathcal{B})$.
Given $\mathcal{Q}^\om$-valued functions $f,g$, we use the abbreviation $G_w(f,g)$ to denote $G_w(\Afr(f),\Afr(g))$, and the same for the rest of the games we define below.

\subsection{The m-Game $\Gm$}

A second version of the Wadge game that  will be useful to us is the game we call $\Gm(\mathcal{A},\mathcal{B})$, where Player II is not allowed to ${\sf pass}$ in his first move, but he can ${\sf pass}$ in  subsequent moves.
In other words, in the game $\Gm(f,g)$, 
Player I plays natural numbers $m,x_0,x_1,\dots$, and Player II plays $n,y_0,y_1,\dots$ alternately, where $n,m, x_0,x_1,...\in\om$ and $y_0,y_1,\dots\in\om\cup\{{\sf pass}\}$.
Player II {\em wins} the game $\Gm(f,g)$ if $Y^{\sf p}$ is infinite and $f(X)(m)\leq_\mathcal{Q}g(Y^{\sf p})(n)$.

\subsection{The Lipchitz-Game $\Gm$}

A third version of the Wadge game that will also be useful to us is the game we call $\Glip(\mathcal{A},\mathcal{B})$, where Player II is not allowed to ${\sf pass}$ at any time.
The rest is all the same.

\subsection{The modified m-Game $\Gtm$}

Steel \cite[Lemma 1]{Steel82} introduced a perfect-information, infinite, two-player game $\Gtm(f,g)$ to study uniformly Turing degree-invariant functions.
Here is a small variation of its $\mathcal{Q}$-valued version:
Alternately, Player I plays natural numbers $m,x_0,x_1,\dots$, and Player II plays $\la n,j\ra ,y_0,y_1,\dots$  with $\la n,j\ra \in\om^2$ and $y_0,y_1,\dots\in\om\cup\{{\sf pass}\}$.
Player II {\em wins} the game $\Gtm(f,g)$ if $Y^{\sf p}$ is infinite and
\[
\Phi^{Y^{\sf p}}_j=X\mbox{ and }f(X)(m)\leq_\mathcal{Q}g(Y^{\sf p})(n),
\]
where $X=(x_n)_{n\in\om}$ and $Y=(y_n)_{n\in\om}$.

\subsection{The plan for embeddability}

The following lemmas lay out the plan to prove the right-to-left direction of Proposition \ref{prop: embeddability}, which states that $\Afr$ is an order-preserving embedding when restricted to $\Afr$-minimal functions.
Recall that the left-to-right direction of Proposition \ref{prop: embeddability} was already proved in Lemma \ref{lem: Afr well defined}.
The lemmas are quite similar in form, except that one assumes that $f$ is $(\leq_T,\leq_m)$-UOP, and the other that $g$ is $(\leq_T,\leq_m)$-UOP.

\begin{lemma}\label{lem: embed f UOP}
Let $f,g\colon\om^\om\to\mathcal{Q}^\om$ be $(\equiv_T,\equiv_m)$-UI,  $\Afr$-minimal functions.
Suppose also that $f$ is $(\leq_T,\leq_m)$-UOP.
Each of the following statements implies the next one:
\begin{enumerate}
\item $\Afr(f)\leq_w \Afr(g)$.    \label{part: embed 1}
\item For every u.p.p.\ tree $S$,  II wins $G_w(f, g \circ S)$.          \label{part: embed 2}
\item For every u.p.p.\ tree $S$,  II wins $\Glip(f, g\circ S)$.\label{part: embed 3}
\item II wins $\Gtm(f, g)$.       \label{part: embed 4}
\item $f\leq\mcone g$.                         \label{part: embed 5}
\end{enumerate}
\end{lemma}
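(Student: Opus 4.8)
The plan is to prove the chain of implications $(\ref{part: embed 1})\Rightarrow(\ref{part: embed 2})\Rightarrow(\ref{part: embed 3})\Rightarrow(\ref{part: embed 4})\Rightarrow(\ref{part: embed 5})$ one link at a time, using the hypothesis that $f$ is $(\leq_T,\leq_m)$-UOP exactly where it is needed (in $(\ref{part: embed 3})\Rightarrow(\ref{part: embed 4})$) and the $\Afr$-minimality of $g$ to pass between $g$ and $g\circ S$.

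\textbf{$(\ref{part: embed 1})\Rightarrow(\ref{part: embed 2})$.} By Lemma \ref{lem:UI-uppt-res}(\ref{UI-uppt-res, part 2}) (or directly part (\ref{UI-uppt-res, part 1})) together with $\Afr$-minimality of $g$, we have $\Afr(g\circ S)\equiv_w\Afr(g)$ for every u.p.p.\ tree $S$. Hence $\Afr(f)\leq_w\Afr(g)\equiv_w\Afr(g\circ S)$, so by the basic property of the Wadge game, II wins $G_w(\Afr(f),\Afr(g\circ S))=G_w(f,g\circ S)$.

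\textbf{$(\ref{part: embed 2})\Rightarrow(\ref{part: embed 3})$.} This is the step where passing is eliminated: given a u.p.p.\ tree $S$, I want a Lipschitz winning strategy for II in $G_w(f,g\circ S')$ for some related tree, and then absorb the delay into a further restriction of the tree. Concretely, if $\tau$ is a winning strategy for II in $G_w(f,g\circ S)$ that uses ${\sf pass}$, then the set of rounds at which $\tau$ passes against a generic play is "thin", and one builds a u.p.p.\ tree $S'\leq_T S$ so that $g\circ S'$ absorbs the passes — i.e., composing $S$ with the u.p.p.\ tree that skips the coordinates corresponding to passes yields a Lipschitz strategy in $G_w(f,g\circ S')$. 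Since $g$ is $\Afr$-minimal, $\Afr(g\circ S')\equiv_w\Afr(g\circ S)$, and applying the hypothesis to $S'$ in place of $S$ finishes it. (This is the standard Wadge-to-Lipschitz argument of Steel--van Wesep, which is available here because $2$, and more generally $\Q$, is a $\Gamma$-bqo under our assumptions, as discussed in Section \ref{sec:set-theory}.)

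\textbf{$(\ref{part: embed 3})\Rightarrow(\ref{part: embed 4})$.} Here is where $(\leq_T,\leq_m)$-UOP-ness of $f$ enters. In $\Gtm(f,g)$, II must additionally produce an index $j$ with $\Phi^{Y^{\sf p}}_j=X$. The idea is: II, using a u.p.p.\ tree $S$ chosen so that $[S]$ is exactly the set of plays $Y$ II will produce, arranges that $Y=S[X\oplus(\text{stuff I controls indirectly})]$ is uniformly pointed and computes $X$; then II uses his Lipschitz winning strategy from $\Glip(f,g\circ S)$ to guarantee $f(X)(m)\leq_\Q (g\circ S)(Y^{\sf p})(n)=g(Y^{\sf p})(n)$ after translating via the $(\leq_T,\leq_m)$-UOP property of $f$ and the $(\equiv_T,\equiv_m)$-UI property of $g$ to account for $X\leq_T Y^{\sf p}$. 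The delicate point is the simultaneous requirement that $Y^{\sf p}$ both be a path through a fixed u.p.p.\ tree (so it computes $X$ uniformly) and be the output of the Lipschitz strategy; this is handled by running the two strategies in parallel on a suitably interleaved tree. I expect \emph{this step to be the main obstacle}, as it is the heart of the Slaman--Steel-style argument and requires carefully coordinating the u.p.p.\ tree, the Lipschitz strategy, and the uniformity indices.

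\textbf{$(\ref{part: embed 4})\Rightarrow(\ref{part: embed 5})$.} A winning strategy $\tau$ for II in $\Gtm(f,g)$ is, by the remark before Lemma \ref{lem:UI-uppt-res} and the discussion of Lipschitz strategies, a continuous (indeed Lipschitz) object; let $C=\tau$. For any $X\geq_T C$, run I's moves to be $m,x_0,x_1,\dots$ enumerating $X$ (and $m$ ranging over $\om$); then $\tau$ produces $\langle n,j\rangle$ and $Y^{\sf p}$ with $\Phi^{Y^{\sf p}}_j=X$, hence $Y^{\sf p}\geq_T X\geq_T C\geq_T\tau$, and in fact $Y^{\sf p}\equiv_T X$ with indices computable from the play, and $f(X)(m)\leq_\Q g(Y^{\sf p})(n)$. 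Using that $g$ is $(\equiv_T,\equiv_m)$-UI to translate $g(Y^{\sf p})$ to $g(X)$, one extracts a single $C$-computable function witnessing $f(X)\leq_m^C g(X)$ uniformly, which is exactly $f\leq\mcone g$. The only routine care needed is checking that all the indices involved (from $\tau$, from uniform pointedness, from UI-ness of $g$) combine into one $C$-computable reduction, which they do.
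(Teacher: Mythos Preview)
Your (1)$\Rightarrow$(2) and (4)$\Rightarrow$(5) are correct and match the paper. The two middle steps, however, have real gaps, and you have mislocated where the $(\leq_T,\leq_m)$-UOP hypothesis on $f$ is used: in the paper it enters in (2)$\Rightarrow$(3), not in (3)$\Rightarrow$(4).

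For (2)$\Rightarrow$(3), your plan to ``absorb passes'' into a u.p.p.\ tree on the $g$ side does not work as stated: the rounds at which II's Wadge strategy passes depend on I's play, so there is no fixed delay pattern to build into a tree, and the Steel--van Wesep result you cite concerns equivalence of Wadge and Lipschitz \emph{self-duality}, not conversion of arbitrary Wadge strategies into Lipschitz ones. The paper instead argues contrapositively on the $f$ side (Lemma~\ref{lem: the m and lip games}): if I wins $\Glip(f,g\circ S)$ via $\tau$, then in $G_w(f,g\circ S)$ Player I plays a marker $0$ whenever II passes and $\tau(Y^{\sf p})+1$ otherwise; the computable map $\Phi_i$ that strips $0$'s and decrements recovers I's $\Glip$-play from I's $G_w$-play, and UOP of $f$ supplies a computable $p$ with $f(\Phi_i(X))(n)\leq_\mathcal{Q} f(X)(p(n))$, transferring the win. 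For (3)$\Rightarrow$(4), the missing ingredient is the Recursion Theorem. In $\Gtm$ Player II must declare \emph{in advance} an index $j$ with $\Phi_j^{Y^{\sf p}}=X$; your sketch does not explain how such a self-referential index is produced, and the u.p.p.\ property of $S$ only gives that $S[Z]$ computes $Z$, not that it computes I's play $X$. The paper (Lemma~\ref{lem: the lip and mod m games}) again argues contrapositively: from I's strategy $\tau$ in $\Gtm$, pick a u.p.p.\ tree $S\geq_T\tau$; for $Z\in[S]$ the map $(m,e)\mapsto\theta(m,e,Z)$ giving I's response to $\langle m,e\rangle\fr Z$ is uniformly $Z$-computable, and the Recursion Theorem yields $e(m)$ with $\Phi_{e(m)}^Z=\theta(m,e(m),Z)$. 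Then in $\Glip(f,g\circ S)$ Player I answers $(m,Y)$ by copying his $\Gtm$-response to $(\langle m,e(m)\rangle,S[Y])$; since $Y$ has no passes, $S[Y]$ really does compute I's moves via $\Phi_{e(m)}$, so II loses. No UOP of $f$ is needed in this step.
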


\begin{lemma}\label{lem: embed g UOP}
Let $f,g\colon\om^\om\to\mathcal{Q}^\om$ be $(\equiv_T,\equiv_m)$-UI,  $\Afr$-minimal functions.
Suppose also that $g$ is $(\leq_T,\leq_m)$-UOP and in standard form (as in Remark \ref{rmk: standard form}).
Each of the following statements implies the next one:
\begin{enumerate}
\item $\Afr(f)\leq_w \Afr(g)$.    \label{part: embed 1 g}
\item II wins $G_w(f, g )$.          \label{part: embed 2 g}
\item There is a u.p.p.\ tree $T$ such that  II wins $\Gm(f\circ  T, g )$.\label{part: embed 3 g}
\item $f\leq\mcone g$.                         \label{part: embed 5 g}
\end{enumerate}
\end{lemma}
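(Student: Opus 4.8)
The three implications are of different character, and I would take them in order. The implication \emph{$(1)\Rightarrow(2)$} is immediate from Wadge's characterization recalled in Section~\ref{sec: games}: by definition $G_w(f,g)=G_w(\Afr(f),\Afr(g))$, and $\Afr(f)\leq_w\Afr(g)$ holds exactly when Player~II wins $G_w(\Afr(f),\Afr(g))$.

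For \emph{$(2)\Rightarrow(3)$}, which is the substantive step, I would fix a continuous $\theta$ with $\Afr(f)(V)\leq_\Q\Afr(g)(\theta(V))$ for all $V$, as furnished by (2). Using that $g$ is in standard form, $\Afr(g)(\langle a,k\rangle\fr W)=\Afr(g_a)(k\fr W)$ with each $\Afr(g_a)$ non-self-dual, hence $\sigma$-join-irreducible by Fact~\ref{fact:sjr-equal-sd} (the non-self-dual case of standard form being the degenerate one-summand version). The whole point is that in $\Gm(f\circ T,g)$ Player~II must announce her first move $n$ having seen only Player~I's first move $m$, whereas the summand $\theta(m\fr T[X])_0$ into which the reduction falls a priori depends on all of $X$. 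I would resolve this by passing to a suitable u.p.p.\ tree. When $\Afr(f)$ is $\sigma$-join-irreducible the resolution is clean: then $\Afr(f)\leq_w\bigoplus_a\Afr(g_a)$ forces $\Afr(f)\leq_w\Afr(g_{a_*})$ for a \emph{single} $a_*$, and by Fact~\ref{fact:sji-deg-pres} there is a coordinate $k_*$ with $\Afr(g_{a_*})(k_*\fr\cdot)\equiv_w\Afr(g_{a_*})$; since this function is non-self-dual, every $\Afr(f)(m\fr\cdot)$ (which reduces to $\Afr(f)$, hence to it) admits a \emph{Lipschitz} reduction $\rho_m$ into it, so Player~II wins $\Gm(f,g)$ with $T$ the identity tree by announcing the constant $n:=\langle a_*,k_*\rangle$ and thereafter copying $\rho_m(X)$. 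In general I would write $\Afr(f)$ as a countable join of $\sigma$-join-irreducible functions (Fact~\ref{fact:sji-deg-pres}) and treat these pieces one at a time, packaging the countably many resulting u.p.p.\ trees and summand/coordinate choices together by the usual fusion/Martin's-Lemma manipulation of u.p.p.\ trees, so that Player~II's first move again becomes predictable from $m$ alone.

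For \emph{$(3)\Rightarrow(4)$}, let a u.p.p.\ tree $T$ and a winning strategy $\sigma$ for Player~II in $\Gm(f\circ T,g)$ be given, and fix an oracle $C$ computing $\sigma$, $T$, and the computable function $u$ witnessing that $g$ is $(\leq_T,\leq_m)$-UOP. Given $X\geqt C$ and $m\in\om$, simulate $\sigma$ against Player~I playing $m$ then $X$; this produces the first move $n_m$ and a run whose pass-free part $Y_m$ satisfies $(f\circ T)(X)(m)\leq_\Q g(Y_m)(n_m)$. As $Y_m\leqt\sigma\oplus T\oplus X\equiv_T X$ uniformly in $m$, we obtain an index $e_m$, computable from $C$ and $m$, with $Y_m\leqt X$ via $\Phi_{e_m}$; since $g$ is $(\leq_T,\leq_m)$-UOP this gives $g(Y_m)\leq_m g(X)$ via $\Phi_{u(e_m)}$, whence $(f\circ T)(X)(m)\leq_\Q g(X)\big(\Phi_{u(e_m)}(n_m)\big)$. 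The map $m\mapsto\Phi_{u(e_m)}(n_m)$ is $C$-computable, so $(f\circ T)(X)\leq_m^C g(X)$ for all $X\geqt C$; thus $f\circ T\leq\mcone g$, and $f\circ T\equiv\mcone f$ by Lemma~\ref{lem:UI-uppt-res}(\ref{UI-uppt-res, part 2}), so $f\leq\mcone g$.

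I expect the main obstacle to be $(2)\Rightarrow(3)$, and within it the general case where $\Afr(f)$ is $\sigma$-join-reducible: there one must simultaneously arrange, for \emph{all} first moves $m$ of Player~I, that the reduction's choice of summand (and of coordinate inside it) becomes readable from $m$ alone, which is exactly where the standard form of $g$, the non-self-duality of its summands, and a careful fusion of u.p.p.\ trees have to be combined. The other two implications are routine given the definitions of the games and a standard cone argument.
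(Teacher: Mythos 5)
Your implications $(1)\Rightarrow(2)$ and $(3)\Rightarrow(4)$ are correct and essentially the same as the paper's: the first is the standard Wadge-game characterization, and the second matches Lemma~\ref{lem:embedding3} (simulate the strategy for Player~II, use that $Y^{\sf p}\leq_T X$ on a cone and that $g$ is $(\leq_T,\leq_m)$-UOP, then transfer from $f\circ T$ to $f$ via Lemma~\ref{lem:UI-uppt-res}).

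The gap is in $(2)\Rightarrow(3)$, which you correctly identify as the substantive step. Your ``clean case'' (when $\Afr(f)$ is $\sigma$-join-irreducible) is a valid observation --- by Fact~\ref{fact:sji-deg-pres} and continuity, a Wadge reduction of a $\sigma$-join-irreducible $\Afr(f)$ into $\Afr(g)$ must land in a single cylinder of the codomain, so one summand $\Afr(g_{a_*})$ and one coordinate $k_*$ suffice, and the Lipschitz-reduction lemma for non-self-dual targets finishes it. Note, though, that this is a \emph{different} case split than the paper's, which splits on $\Afr(g)$ being $\sigma$-join-irreducible (Lemma~\ref{lem:UI-NSD-mc}) versus $\sigma$-join-reducible (Lemma~\ref{lem:UI-sji-win}); your split on $\Afr(f)$ does not cover the same ground. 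More seriously, your ``general case'' is only a sketch, and the proposed ``usual fusion/Martin's-Lemma manipulation'' does not work. The abstract decomposition $\Afr(f)\equiv_w\bigoplus_i\A_i$ into $\sigma$-join-irreducible pieces is not aligned with the cylinders $\Afr(f)\upto[m]$ that actually appear in $\Gm(f\circ T,g)$ (Player I's first move is $m$, and $\Afr(f)\upto[m]$ need not be $\sigma$-join-irreducible), so there is no clear way to ``treat these pieces one at a time.'' Working directly, one wants a u.p.p.\ tree $T$ such that, for every $m$, the value $\theta(m\fr T[X])(0)$ is constant in $X$; but the clopen partitions induced by $\theta(m\fr\cdot)(0)$ are \emph{not Turing-degree-invariant}, so Martin's Lemma is inapplicable, and there is no standard fusion lemma that lets you preserve a uniformly pointed perfect tree through $\om$-many such non-invariant constraints. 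What the paper actually does in Lemma~\ref{lem:UI-sji-win} is quite different: it invokes the Erd\"os--Tarski theorem to cover the bqo $\{\Afr(g_a)\}$ by finitely many directed sets, forms the closed set $\F_{m,n}$ of $X$'s along which the summand is never pinned down, and uses a new extension (Lemma~\ref{lem:CDJS}) of the Cenzer--Downey--Jockusch--Shore theorem on thin $\Pi^0_1$ classes to argue that either some $\F_{m,n}$ is almost thin --- so that a u.p.p.\ tree inside a double-jump cone avoids it, and compactness plus directedness yield a single index $i(n)$ --- or else Player~I already wins $G_w(f,g)$, a contradiction. This computability-theoretic argument is the crux of the section and is not a routine tree manipulation.
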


First, let us see how the lemmas imply the right-to-left direction of Proposition \ref{prop: embeddability}.

\begin{proof}[Proof of Proposition \ref{prop: embeddability}]
Consider $(\equiv_T,\equiv_m)$-UI, {\em $\Afr$-minimal} functions $f,g\colon\om^\om\to\mathcal{Q}^\om$.
The problem is that maybe neither of them is $(\leq_T,\leq_m)$-UOP.
By Proposition \ref{prop: onto}, there is a $(\leq_T,\leq_m)$-UOP function $h$ such that $\Afr(g)\equiv_w \Afr(h)$.
Furthermore, as noted in Remark \ref{rmk: standard form}, we can assume $h$ is in standard form.
We then apply Lemma \ref{lem: embed g UOP} to $f$ and $h$, Lemma \ref{lem: embed f UOP} to $h$ and $g$, and then apply the transitivity of $\leq\mcone$. 
\end{proof}

Let us start by proving the easiest implication in Lemmas \ref{lem: embed f UOP} and \ref{lem: embed g UOP}.
Since $f$ and $g$ are $\Afr$-minimal, we have that $\Afr(f)\leq_w\Afr(g)$ if and only if, for every u.p.p.\ tree $S$, $\Afr(f)\leq_w\Afr(g\circ S)$.
The equivalences between (\ref{part: embed 1}) and (\ref{part: embed 2}) in both lemmas then follow from the equivalence between  $\Q$-Wadge reducibility and the Wadge game.

The implication from (\ref{part: embed 4}) to (\ref{part: embed 5})  follows from the equivalence between  $\leq\mcone$ reducibility and the modified m-game $\Gtm$ (Lemma \ref{lem:embedding2}).

\section{The proof of the embeddability lemmas}

This section is dedicated to proving the rest of Lemmas \ref{lem: embed f UOP} and \ref{lem: embed g UOP}.

\subsection{The case when $f$ is $(\leq_T,\leq_m)$-UOP}

We start with the proof of Lemma  \ref{lem: embed f UOP}.
The implication from (\ref{part: embed 2}) to (\ref{part: embed 3}) in Lemma \ref{lem: embed f UOP} follows from the next lemma and an application of determinacy.

\begin{lemma}\label{lem: the m and lip games}
Let $f \colon \om^\om\to\mathcal{Q}^\om$ be $(\leq_T,\leq_m)$-UOP and $g\colon \om^\om\to\mathcal{Q}^\om$ be $(\equiv_T,\equiv_m)$-UI.
If Player I has a winning strategy for $\Glip(f,g)$, then Player I has a winning strategy for $G_w(f,g)$.
\end{lemma}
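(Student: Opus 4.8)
The plan is to convert a winning strategy for Player~I in the Lipschitz game $\Glip(f,g)$ into a winning strategy for Player~I in the Wadge game $G_w(f,g)$. The only difference between the two games is that in $G_w$ Player~II is allowed to {\sf pass}, so the issue is purely that Player~I must somehow ``absorb'' the passes of his opponent. If Player~II passes only finitely often, then $Y^{\sf p}$ is just $Y$ with a finite initial delay, and a Lipschitz strategy for~I converts trivially; the real content is the case where Player~II passes infinitely often, so that $Y^{\sf p}$ is a \emph{proper} subsequence of the moves Player~II has made, and the reduction Player~I is implicitly computing no longer sees all of~$Y$.

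The key idea is to use the extra hypothesis that $f$ is $(\leq_T,\leq_m)$-UOP (not merely $(\equiv_T,\equiv_m)$-UI), together with $\Afr$-minimality, to replace the opponent's delayed play by an honest faster play via a u.p.p.\ tree. Concretely, suppose $\tau$ is a winning strategy for~I in $\Glip(f,g)$. Given a run of $G_w(f,g)$ in which II builds $Y$ with infinitely many passes, let $Z = Y^{\sf p}$ be the sequence II ``really'' produces. I want to simulate, against $\tau$, the Lipschitz run in which II plays $Z$ directly (with no passes and no delay). Since I must actually respond in $G_w$ at the real pace of the game, I will have I commit to the moves dictated by $\tau$ on $Z\upharpoonright k$ whenever enough of $Z$ has appeared. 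This produces an $X'$ that is the $\tau$-response to $Z$ in the Lipschitz game, and $X'$ is computed from $Z$ by a Lipschitz (indeed, a continuous, in fact $\tau$-computable) map. The point of passing to $Z = Y^{\sf p}$ is that I does not need to see the passes: the map $Y \mapsto Y^{\sf p}$ is continuous on the set of $Y$'s with infinitely many non-passes, so composing, I gets a continuous strategy in $G_w$ whose output $X'$ satisfies $\Afr(f)(X')\leq_\Q\Afr(g)(Z)$, i.e.\ $f(X')(m')\le_\Q g(Z)(n)$ where $m'$ is the first coordinate of $X'$.

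What remains — and this is the step I expect to be the main obstacle — is that a winning play for~I in $\Glip$ produces an $X'$ that $\Afr(f)$-dominates $\Afr(g)(Z)$, but I need, for a winning play in $G_w$, an $X$ with $\Afr(g)(Z)\le_\Q \Afr(f)(X)$ where moreover the \emph{first coordinate} $m$ of $X$ is played by I as his opening move, and $X$'s tail must be a genuine $G_w$-run. This is where $f$ being $(\leq_T,\leq_m)$-UOP and $\Afr$-minimal is used: $X'$ as constructed is of the form $m'\fr W$ where $W$ is computed from $Z$ (say $W = \Gamma(Z)$ for a fixed continuous $\Gamma$ coming from $\tau$), and I want instead to play into $G_w(f,g)$, i.e.\ I am allowed to compute my moves from nothing (I moves first). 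Actually I \emph{is} the one producing $X$, so this is automatic: I simply plays $X = X'$, the move sequence dictated by $\tau$ composed with the bookkeeping map $Y\mapsto Y^{\sf p}$. The subtlety is only that at finite stages I may not yet know the next bit of $Z$ because II is passing; here I simply has I stall by repeating a fixed dummy value until the next genuine move of II arrives, and because II must (in a run where $Y^{\sf p}$ is infinite) eventually play a non-pass, this stalling is only ever finite between successive genuine moves, so $X$ is well-defined as an element of $\om^\om$. One then checks: if the resulting run has $Y^{\sf p}$ infinite, then $X = $ ($\tau$'s response to $Y^{\sf p}$ in $\Glip$), so $\Afr(f)(X)\not\le_\Q\Afr(g)(Y^{\sf p})$ because $\tau$ was winning for~I in $\Glip$; hence I wins this run of $G_w(f,g)$ as well. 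If instead $Y^{\sf p}$ is finite, II has already lost $G_w$ by definition. Therefore I's strategy is winning in $G_w(f,g)$, as required. (I note that the hypotheses on $f$ being $(\leq_T,\leq_m)$-UOP and $g$ being $(\equiv_T,\equiv_m)$-UI are not actually needed for this particular combinatorial reduction between the two games — they are recorded because Lemma~\ref{lem: the m and lip games} is stated for the specific $f,g$ of Lemma~\ref{lem: embed f UOP}, and they will be used in the surrounding implications — so the proof here is a pure game-transformation argument.)
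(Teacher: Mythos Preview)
There is a genuine gap. In $G_w(f,g)$, Player~I is \emph{not} allowed to pass --- only Player~II may. So when you have Player~I ``stall by repeating a fixed dummy value'' while Player~II passes, those dummy values become part of Player~I's actual play. Your claim that ``if the resulting run has $Y^{\sf p}$ infinite, then $X =$ ($\tau$'s response to $Y^{\sf p}$ in $\Glip$)'' is therefore false: what Player~I has produced is $\tau$'s response to $Y^{\sf p}$ \emph{interleaved with dummy symbols} at the rounds where~II passed, and knowing $\Afr(f)(\tau(Y^{\sf p}))\not\leq_\Q \Afr(g)(Y^{\sf p})$ tells you nothing about $\Afr(f)$ evaluated at this padded sequence. (Indeed, for arbitrary $\A,\B$ it is simply not true that a win for~I in the Lipschitz game transfers to a win for~I in the Wadge game.)

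This is exactly where the hypothesis that $f$ is $(\leq_T,\leq_m)$-UOP is used, so your closing remark that it ``is not actually needed for this particular combinatorial reduction'' is incorrect. The paper's fix is as follows: take the dummy value to be $0$ and shift every genuine $\tau$-move up by $1$, so that a single computable operator $\Phi_i$ (delete the $0$'s, subtract $1$ from what remains) recovers the tail of $\tau(Y^{\sf p})$ from the tail $X$ of Player~I's actual play. Since $\Phi_i(X)\leq_T X$ via a fixed index and $f$ is $(\leq_T,\leq_m)$-UOP, there is a computable $p$ with $f(\Phi_i(X))(n)\leq_\Q f(X)(p(n))$ for all $X$ and $n$. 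Player~I opens with $p(n)$ (where $n$ is $\tau$'s first move) rather than with $n$; then
\[
\Afr(f)(p(n)\fr X)=f(X)(p(n))\geq_\Q f(\Phi_i(X))(n)=f(\tau(Y^{\sf p})^-)(n)\not\leq_\Q\Afr(g)(Y^{\sf p}),
\]
and transitivity of $\leq_\Q$ gives the win for~I. Without the UOP hypothesis you have no mechanism to absorb the padding into the first coordinate, and the reduction collapses.
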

\begin{proof}
Let $\tau$ be Player I's strategy in $\Glip(f,g)$.
The difficulty in defining a strategy in $G_w(f,g)$ is that now Player II is allowed to ${\sf pass}$.

Let $\Phi_i$ be a computable operator that removes the 0's from the input, and reduces the rest of the entries by 1.
That is, $\Phi_i(\sigma\fr 0)=\Phi_i(\sigma)$ and $\Phi_i(\sigma\fr (n+1))=\Phi_i(\sigma)\fr n$.
Since $f$ is $(\leq_T,\leq_m)$-UOP, there is a computable function $p$ such that $f(\Phi_i(X))(n)\leq_\mathcal{Q}f(X)(p(n))$ for all $X\in\om^\om$.

We are now ready to describe a winning strategy for Player I in the Wadge game $G_w(f,g)$.
Let $Y=(y_s)_{s\in\om}$ be a sequence produced by Player II in the Wadge game $G_w(f,g)$.
We will play a run of $\Glip(f,g)$ at the same time, where Player II plays $\Yp$.
Let Player I's first move in $G_w(f,g)$ be $x_0=p(n)$, where $n$ is Player I's move in $\Glip(f,g)$.
At any round $s$, if Player II's move $y_s$ is ${\sf pass}$, then let Player I's next move be $x_{s+1}=0$.
If Player II's move is $y_s\not={\sf pass}$, then let Player I follow the winning strategy $\tau$ in the  game $\Glip(f,g)$ and then add $1$, that is, let Player I's next move be $x_{s+1}=\tau(\langle y_0,\dots,y_s\rangle^{\sf p})+1$.

Assume that $(y_s)_{s\in\om}$ contains infinitely many natural numbers; otherwise Player I wins.
If Player I follows the above strategy as we described and plays a sequence $p(n)\fr X$, where $X=\la x_1,x_2,...\ra$, we have $\Phi_i(X)=\tau(Y^{\sf p})^-$ and then we get
\[
\Afr(f)(p(n)\fr X) =f(X)(p(n))\geq_\mathcal{Q}f(\Phi_i(X))(n)=f(\tau(Y^{\sf p})^-)(n)\not\leq_\mathcal{Q}\Afr(g)(Y^{\sf p}).
\]

Consequently, Player I wins the Wadge game $G_w(f,g)$.
\end{proof}

The implication from (\ref{part: embed 3}) to (\ref{part: embed 4}) in Lemma \ref{lem: embed f UOP} follows from the next lemma and an application of determinacy.

\begin{lemma}\label{lem: the lip and mod m games}
Let $f,g \colon \om^\om\to\mathcal{Q}^\om$ be $(\equiv_T,\equiv_m)$-UI functions.
If Player I has a winning strategy for $\Gtm(f,g)$, then Player I has a winning strategy for $\Glip(f,g\circ S)$ for some u.p.p. tree $S$.
\end{lemma}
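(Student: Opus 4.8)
The plan is to transform Player I's winning strategy $\tau$ for $\Gtm(f,g)$ into a winning strategy for $\Glip(f, g\circ S)$, for an appropriately chosen u.p.p.\ tree $S$. The essential difference between the two games is: in $\Gtm(f,g)$, Player II gets to play a pair $\langle n,j\rangle$ and only wins if, in addition to $f(X)(m)\leq_\Q g(Y^{\sf p})(n)$, she also certifies $\Phi^{Y^{\sf p}}_j = X$; whereas in $\Glip(f,g\circ S)$, Player II plays $n$ and a sequence $Y$ with no passes, and wins if $f(X)(m)\leq_\Q (g\circ S)(Y)(n) = g(S[Y])(n)$. So the reduction must engineer a situation in which the Turing computation $\Phi^{Y^{\sf p}}_j = X$ comes for free.

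First I would pick $S$ to be a u.p.p.\ tree such that, for every $Y$, the sequence $X$ that Player I would produce against $S[Y]$ in the run we are simulating is uniformly computable from $S[Y]$: the point is that if $S$ is a u.p.p.\ tree coding $\tau$ (or coding enough information that $\tau$'s responses are recoverable), then Player I's moves in the simulated run of $\Gtm$ are determined by $S[Y]$ in a fixed uniform way, so there is an index $j_0$ with $\Phi^{S[Y]}_{j_0} = X$ for all $Y$. Concretely, given a run of $\Glip(f, g\circ S)$ in which Player II plays $n$ and then $Y = (y_s)$, Player I simulates $\Gtm(f,g)$: he treats $S[Y]$ as the opponent's real play, has the simulated Player II play the pair $\langle n, j_0\rangle$ as her first move, and plays $S[Y]$ digit by digit as her subsequent moves (no passes needed, since $\Glip$ has no passes). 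Player I responds with $m, x_0, x_1,\dots$ according to $\tau$, and outputs exactly these as his moves in $\Glip(f, g\circ S)$. Because $\tau$ is winning for I in $\Gtm$, the resulting position is a loss for the simulated Player II, meaning $\Phi^{S[Y]}_{j_0}\neq X$ or $f(X)(m)\not\leq_\Q g(S[Y])(n)$; since $S$ was chosen so that $\Phi^{S[Y]}_{j_0} = X$ always holds, we must have $f(X)(m)\not\leq_\Q g(S[Y])(n) = (g\circ S)(Y)(n)$, which is exactly a win for Player I in $\Glip(f, g\circ S)$.

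The main obstacle I anticipate is arranging the u.p.p.\ tree $S$ so that the identity $\Phi^{S[Y]}_{j_0} = X$ holds with a single fixed index $j_0$ independent of $Y$, while simultaneously ensuring that $S[Y]$ is a legitimate object for the simulated Player II to play — i.e.\ that $S[Y]$ is itself in $\om^\om$ and that feeding it to $\tau$ as II's play is consistent with the rules of $\Gtm$. The standard trick is to let $S$ be a u.p.p.\ tree built so that from any $Z\in[S]$ one recovers $S$ (by uniform pointedness), hence recovers the simulation machinery, hence recovers $\tau$'s responses and thus $X$; the index $j_0$ is then the fixed index for this recovery procedure. One also has to check that nothing in the construction requires $g$ to be anything beyond $(\equiv_T,\equiv_m)$-UI — indeed we only use $g$ through $g\circ S$ and never differentiate $g$ from $f$, so the symmetric hypotheses of the lemma suffice. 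A secondary technical point is handling the ${\sf pass}$ discrepancy: in $\Glip$ Player II never passes, so when we simulate $\Gtm$ we never need $Y^{\sf p}$ versus $Y$ bookkeeping — Player II in the simulated game simply plays the bits of $S[Y]$ straight through, and this is where the absence of passes in $\Glip$ is exactly what we want.
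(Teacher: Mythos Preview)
Your overall architecture is right: pick a u.p.p.\ tree $S$ that codes $\tau$, and against Player II's moves $(n,Y)$ in $\Glip(f,g\circ S)$ simulate a run of $\Gtm(f,g)$ in which the simulated II plays some pair $\langle n,j\rangle$ and then $S[Y]$, with Player I copying $\tau$'s responses. The gap is in your claim that a single fixed index $j_0$ can be chosen so that $\Phi^{S[Y]}_{j_0}=X$ always holds.

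There are two intertwined problems. First, the sequence $X$ produced by $\tau$ depends not only on $S[Y]$ but also on the \emph{entire} first move $\langle n,j\rangle$ that the simulated II plays --- in particular on the number $n$, which is Player II's genuine first move in $\Glip$ and is \emph{not} recoverable from $S[Y]$. So no index independent of $n$ can compute $X$ from $S[Y]$ alone; the index must be a function $j(n)$. Second, even once you allow the index to depend on $n$, there is a circularity: $X=\theta(n,j(n),S[Y])$ depends on the very index $j(n)$ you are trying to define, because $\tau$'s responses depend on which $j$ the simulated II announced. Your ``standard trick'' of recovering $\tau$ from $S[Y]$ via uniform pointedness gets you a computable operator $\Psi$ with $\Psi^Z(n,j)=\theta(n,j,Z)$ for $Z\in[S]$, but it does not by itself produce a $j$ with $\Phi_j^Z=\Psi^Z(n,j)$.

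This is exactly what the Recursion Theorem is for, and it is what the paper uses: for each $n$ one obtains (uniformly) an index $e(n)$ with $\Phi_{e(n)}^Z=\Psi^Z(n,e(n))$ for all $Z$. Player I then has the simulated II play $\langle n,e(n)\rangle$ followed by $S[Y]$; since $\Phi_{e(n)}^{S[Y]}=\theta(n,e(n),S[Y])=X$ by construction, the Turing-computation clause of $\Gtm$ is satisfied and $\tau$'s win must come from $f(X)(m)\not\leq_\Q g(S[Y])(n)$, i.e., a win for I in $\Glip(f,g\circ S)$. Once you insert the dependence of the index on $n$ and invoke the Recursion Theorem, your argument becomes the paper's.
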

\begin{proof}
Let $\tau$ be Player I's strategy in $\Gtm(f,g)$.
The difficulty in defining  a strategy in $\Glip(f,g\circ S)$ is that now Player II does not need to play a correct index $e$ to compute Player I's moves.

For each $m, e, Z$, let $n$ and $\theta(m,e, Z)$ be such that $(n, \theta(m,e, Z))$ is Player I's answer to II playing $(\la m, e\ra, Z)$ in $\Gtm(f,g)$.
Let $S\geqt \tau$ be a u.p.p.\ tree.
Then there is a computable operator $\Psi$ such that, for every $Z\in\om^\om$ with $Z\in [S]$, we have $\Psi^Z(m,e)= \theta(m,e, Z)$.
By the Recursion Theorem, there is a computable function $e(m)$ such that $\Phi_{e(m)}^Z=\Psi^Z(m,e(m))$.

To define Player I's strategy in $\Glip(f, g\circ S)$ answering to Player II moving $(m,Y)$, all we have to do is imitate  Player I's strategy in $\Gtm(f, g)$ answering to Player II moving $(\la m,e(m)\ra, S[Y])$.
Notice that since Player II is not allowed to ${\sf pass}$, $Y=\Yp$, and hence $S[Y]$ computes Player I's moves using $\Phi_{e(m)}$.
\end{proof}

The implication from (\ref{part: embed 4}) to (\ref{part: embed 5}) follows from the next lemma.

\begin{lemma}\label{lem:embedding2}
Let $f,g \colon \om^\om\to\mathcal{Q}^\om$ be $(\equiv_T,\equiv_m)$-UI functions.
If Player II has a winning strategy for $\Gtm(f,g)$, then $f\leq\mcone g$.
\end{lemma}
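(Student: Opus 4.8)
The statement to prove is Lemma~\ref{lem:embedding2}: if Player~II has a winning strategy $\sigma$ in $\Gtm(f,g)$, then $f\leq\mcone g$. The plan is to use $\sigma$ together with a sufficiently powerful oracle $C$ (namely $C\geqt\sigma$ and whatever witnesses the uniform invariance of $f$ and $g$) to produce, for each $X\geqt C$, a $C$-computable many-one reduction from $f(X)$ to $g(X)$. The key point is that $\Gtm$ is designed so that a winning strategy for~II, when fed Player~I moves that \emph{encode} $X$ itself, outputs a pair $\langle n,j\rangle$ together with a play $Y^{\sf p}$ with $\Phi^{Y^{\sf p}}_j = X$ and $f(X)(m)\leq_\Q g(Y^{\sf p})(n)$; since $Y^{\sf p}$ is produced by~II via $\sigma$ from~I's moves, it is computable from $\sigma\oplus X$, hence from $X$ once $X\geqt\sigma$.

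\textbf{Key steps.} First I would fix $C\in\om^\om$ computing $\sigma$ and the computable functions witnessing that $f$ and $g$ are $(\equiv_T,\equiv_m)$-UI. Now fix any $X\geqt C$. For each $m\in\om$, consider the run of $\Gtm(f,g)$ in which Player~I plays $m$ followed by the entries of $X$ (i.e.\ I plays $m,X(0),X(1),\dots$), and Player~II follows $\sigma$. Let $\langle n_m,j_m\rangle$ be II's first move in this run and let $Y_m$ be the full sequence II produces, $Y_m^{\sf p}$ its de-passed version. Since~II wins, $Y_m^{\sf p}$ is infinite, $\Phi^{Y_m^{\sf p}}_{j_m}=X$, and $f(X)(m)\leq_\Q g(Y_m^{\sf p})(n_m)$. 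The map $m\mapsto (n_m, Y_m^{\sf p})$ is computable from $\sigma\oplus X$, hence $C$-computable uniformly in $X$. So far this gives, for each $m$, a target index $n_m$ into a \emph{different} function value $g(Y_m^{\sf p})$; the remaining work is to collapse all these $g(Y_m^{\sf p})$ back to $g(X)$.

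\textbf{Collapsing to $g(X)$.} This is where the hypotheses $\Phi^{Y_m^{\sf p}}_{j_m}=X$ and the uniform invariance of $g$ come in. Each $Y_m^{\sf p}$ computes $X$ via the index $j_m$ that~II was forced to announce; and $Y_m^{\sf p}\leqt\sigma\oplus X\leqt X$ (using $X\geqt C\geqt\sigma$), with a Turing reduction $X\to Y_m^{\sf p}$ whose index can be read off $C$ uniformly. Thus $Y_m^{\sf p}\equiv_T X$ via a pair of indices computable (from $C$) uniformly in $m$. Applying the computable function $u$ witnessing that $g$ is $(\equiv_T,\equiv_m)$-UI, we get a single index — call it $k_m$, computable from $C$ uniformly in $m$ — such that $g(Y_m^{\sf p})(n) = g(X)(\Phi_{u(\cdots)}(n))$ for all $n$; in particular $g(Y_m^{\sf p})(n_m) = g(X)(\ell_m)$ for an $\ell_m$ that is $C$-computable uniformly in $m$. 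Chaining the inequalities, $f(X)(m)\leq_\Q g(X)(\ell_m)$ for all $m$, and $m\mapsto \ell_m$ is $C$-computable; i.e.\ $\Phi^C_e$ with this index $e$ witnesses $f(X)\leq_m^C g(X)$. Since this works for every $X\geqt C$, we conclude $f\leq\mcone g$.

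\textbf{Main obstacle.} I expect the only real subtlety to be bookkeeping the uniformity: one must check that all the auxiliary indices ($j_m$, the reduction $X\to Y_m^{\sf p}$, the invariance witness $u$, and the resulting $\ell_m$) depend computably on $m$ with the oracle $C$ fixed in advance — in particular that $n_m$, $j_m$ and enough of $Y_m^{\sf p}$ to run $u$ can all be extracted uniformly from $\sigma$ and $X$. The de-passing operation $Y\mapsto Y^{\sf p}$ and the requirement that $Y^{\sf p}$ be infinite need a word: infinitude is guaranteed because~II wins, and $Y^{\sf p}$ is computable from $Y$ (hence from $\sigma\oplus X$) as a total function since it is infinite. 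No determinacy is needed here — the winning strategy is handed to us — so this lemma is purely a uniform-coding argument, and the plan above is essentially the whole proof modulo these routine verifications.
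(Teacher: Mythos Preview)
Your proposal is correct and follows essentially the same approach as the paper: run the strategy against $m\fr X$, use the announced index $j_m$ together with the fact that $Y_m^{\sf p}$ is computable from $\sigma\oplus X$ to get $X\equiv_T Y_m^{\sf p}$ with uniformly computable indices, and then apply the $(\equiv_T,\equiv_m)$-UI witness for $g$ to collapse $g(Y_m^{\sf p})$ to $g(X)$. The only cosmetic slip is writing $g(Y_m^{\sf p})(n)=g(X)(\Phi_{u(\cdots)}(n))$ with an equals sign; for general $\Q$ this should be $\leq_\Q$, which is all you use anyway.
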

\begin{proof}
Consider a winning strategy for Player II in  $\Gtm(f,g)$.
Suppose the answer to Player I playing $n\fr X$ is Player II playing $\la m,j\ra\fr Y$.
From the strategy, we get a function $\psi$ that outputs $m$ given $n$ and satisfies $f(X)(n)\leq_\Q g(Y)(\psi(n))$ for all $n\in\om$ and $X\in \om^\om$.
Also, if we take $X$ that can compute the strategy, we get, for each $n$, an index $i(n)$ for the Turing equivalence between $X$ and $\Yp$: $X$ computes $\Yp$ using $n$ and the strategy, and $\Yp$ computes $X$ using $\Phi_j$.
Thus, 
\[
f(X)(n)\leq_\mathcal{Q} g(\Yp)(\psi(n))
\leq_\mathcal{Q} g(X)(\Phi_{u(i(n))}\circ\psi(n)),
\]
where $u$ witnesses that $g$ is $(\equiv_T,\equiv_m)$-UI.
This implies that  $f(X)\leq_m g(X)$ whenever $X\in \om^\om$ computes Player II's strategy.
\end{proof}

This finishes the proof of Lemma \ref{lem: embed f UOP}.

\subsection{The case when $g$ is $(\leq_T,\leq_m)$-UOP}   \label{sec: wadge to m games}

We now concentrate on the proof of Lemma  \ref{lem: embed g UOP}.
The implication from (\ref{part: embed 3 g}) to (\ref{part: embed 5 g})  follows from the next lemma.

\begin{lemma}\label{lem:embedding3}
Let $f \colon \om^\om\to\mathcal{Q}^\om$ be $(\equiv_T,\equiv_m)$-UI and $g\colon \om^\om\to\mathcal{Q}^\om$ be $(\leq_T,\leq_m)$-UOP.
If there is a u.p.p\ tree $T$ such that Player II has a winning strategy for $\Gm(f\circ T,g)$, then $f\leq\mcone g$.
\end{lemma}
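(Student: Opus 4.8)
\textbf{Proof plan for Lemma \ref{lem:embedding3}.}

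The plan is to mimic the argument of Lemma \ref{lem:embedding2}, extracting a many-one reduction on a cone from the winning strategy for Player II in the $\Gm$-game, but now we must account for the composition with the u.p.p.\ tree $T$ on the $f$-side and for the fact that $g$ is only $(\leq_T,\leq_m)$-UOP (rather than fully $(\equiv_T,\equiv_m)$-UI in a way we can invert). First I would fix a winning strategy $\tau$ for Player II in $\Gm(f\circ T, g)$ and let $C\geqt T\oplus\tau$ be an oracle that computes both $T$ and $\tau$. I will show $f(Z)\leq^C_m g(Z)$ for every $Z\geqt C$, which gives $f\leq\mcone g$ directly from Definition \ref{def:many-one-cone}.

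The key steps are as follows. Given $Z\geqt C$, consider the run of $\Gm(f\circ T, g)$ in which Player I plays $m\fr Z$ for each $m\in\om$ in parallel (i.e., think of Player I's move as choosing $m$ first and then feeding in $Z$), and Player II responds by $\tau$ with some $n\fr Y$. Because $\tau$ is a winning strategy, $Y^{\sf p}$ is infinite and $(f\circ T)(Z)(m)\leq_\mathcal{Q} g(Y^{\sf p})(n)$; the map $m\mapsto n$ is a computable function $\psi$ (computable from $\tau$, hence from $C$, hence from $Z$). Now, since $\tau\leqt C\leqt Z$, the sequence $Y^{\sf p}$ is computable from $Z$ (Player II's moves are determined by $\tau$ and Player I's moves, which are $m$ and $Z$), so $Y^{\sf p}\leqt Z$, uniformly in a way $C$ can see; write $Y^{\sf p}=\Phi^C_k(Z)$ for a fixed index $k$. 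Since $g$ is $(\leq_T,\leq_m)$-UOP and $Y^{\sf p}\leqt Z$, we get $g(Y^{\sf p})\leq_m g(Z)$, via an index we can compute effectively from $k$ (this is where relativizing the UOP witness to $C$ is needed: properly we should observe that the UOP property, being a theorem, relativizes, so $g$ is $(\leq_T,\leq_m)$-UOP relative to $C$ with the same computable witness $u$; alternatively one absorbs $C$ into the input as in the proof of Lemma \ref{lem:onto2}). Combining, $(f\circ T)(Z)(m)\leq_\mathcal{Q} g(Z)(h(m))$ for a $C$-computable $h$. Finally, since $T$ is a u.p.p.\ tree and $Z\geqt C\geqt T$, we have $Z\equiv_T T[Z]$ with indices computable from $C$, so $f$ being $(\equiv_T,\equiv_m)$-UI gives a $C$-computable translation between $f(T[Z])$ and $f(Z)$; composing, $f(Z)\leq^C_m g(Z)$ as desired. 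Since this holds for all $Z\geqt C$, we conclude $f\leq\mcone g$.

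The main obstacle I anticipate is the bookkeeping around the oracle $C$ and making sure every reduction witness is genuinely $C$-computable and uniform: in particular, that the passage from "Player II's strategy $\tau$ produces $Y^{\sf p}$" to "$Y^{\sf p}\leqt Z$ via a fixed index" is uniform, and that the $(\leq_T,\leq_m)$-UOP property of $g$ can be applied relative to $C$ with a witness that $C$ computes. Both are routine once one invokes the relativization of UOP (or uncurries $C$ into the argument, as in Lemmas \ref{lem:onto2} and \ref{lem:UOP-non-self-dual}), but they are the points that require care rather than the use of the game itself; the game essentially just hands us the function $\psi$ and the inequality. The role of $T$ being uniformly pointed, as recorded right after the definition of u.p.p.\ trees, is exactly to close the final gap between $f\circ T$ and $f$.
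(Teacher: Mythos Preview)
Your proposal is correct and follows essentially the same route as the paper's proof: extract $\psi$ from Player II's first move, use that $Y^{\sf p}\leqt Z$ uniformly (for $Z$ above the strategy) together with the $(\leq_T,\leq_m)$-UOP of $g$ to pass from $g(Y^{\sf p})$ to $g(Z)$, and finally use the u.p.p.\ property $Z\equiv_T T[Z]$ and the $(\equiv_T,\equiv_m)$-UI of $f$ to replace $f(T[Z])$ by $f(Z)$. Your worry about ``relativizing UOP to $C$'' is unnecessary: once $Z\geqt C\geqt\tau$, the reduction $Y^{\sf p}\leqt Z$ is a genuine (unrelativized) Turing reduction with an index computable from $m$ and the index for $Z\geqt C$, so the UOP witness $u$ applies directly; the resulting many-one reduction is then $C$-computable, which is all that $\leq\mcone$ requires.
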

\begin{proof}
The proof is very similar to that of Lemma \ref{lem:embedding2}, with the exceptions that now we do not need to use that $\Yp$ computes $X$, and that we need to consider the tree $T$.
 
Consider a winning strategy for Player II in  $\Gm(f\circ T,g)$.
Suppose the answer to $(n,X)$ is $(m, Y)$.
From the strategy, we get a function $\psi$ that outputs $m$ given $n$ and satisfies $f(X)(n)\leq_\Q g(Y)(\psi(n))$ for $n\in\om$ and $X\in [T]$.
If we take $X\in [T]$ that can compute the strategy, then $X$ can compute $\Yp$ uniformly using $n$.
Let  $i(n)$ be an index for the Turing reduction from $\Yp$ to $X$.
Thus, 
\[
f(X)(n)\leq_\mathcal{Q} g(\Yp)(\Psi(n))
\leq_\mathcal{Q} g(X)(\Phi_{u(i(n))}\circ\Psi(n)),
\]
where $u$ witnesses that $g$ is $(\leq_T,\leq_m)$-UOP, and hence $f(X)\leq_m g(X)$ for all $X\in [T]$ that compute the strategy.
Now, if we take any $X\geqt T$, we have that $X\teq T[X]$, and hence that $f(X)\leq_m f(T[X])$ and $g(T[X])\leq_m g(X)$, since $f$ and $g$ are $(\equiv_T,\equiv_m)$-UI.
Putting all this together, we get $f(X)\leq_m g(X)$, for all $X$ that compute $T$ and the strategy.
This shows that $f\leq\mcone g$.
\end{proof}

All that is left to finish the proof of Lemma \ref{lem: embed g UOP} is to prove  (\ref{part: embed 2}) implies (\ref{part: embed 3}), connecting the Wadge game and the $\mathbf{m}$-game.
This will then finish the proofs of Proposition \ref{prop: embeddability} and our main theorems.
The proof is divided in two cases:
the case when $\Afr(g)$ is $\sigma$-join-irreducible, and the case when $\Afr(g)$ is  $\sigma$-join reducible and $g$ is in standard form (by Fact \ref{fact:sjr-equal-sd} and Remark \ref{rmk: standard form}). 
The existence of the u.p.p.\ tree $T$ mentioned in (\ref{part: embed 3}) is only needed in the latter case.

\begin{lemma}\label{lem:UI-NSD-mc}
Let $\A$ and $\B$ be $\mathcal{Q}$-valued functions on $\om^\om$ such that $\B$ is $\sigma$-join-irreducible.
If Player II has a winning strategy for $G_w(\A,\B)$, then Player II has a winning strategy for $G_\mathbf{m}(\A,\B)$.
\end{lemma}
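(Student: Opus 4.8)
The plan is to leverage the characterization of $\sigma$-join-irreducibility from Fact~\ref{fact:sji-deg-pres}: since $\B$ is $\sigma$-join-irreducible, there is an $X^*\in\om^\om$ such that $\B\leq_w\B\upto[X^*\upto k]$ for every $k\in\om$. Fix such an $X^*$, and fix a winning strategy $\tau$ for Player~II in $G_w(\A,\B)$. The only difference between $G_w(\A,\B)$ and $G_\mathbf{m}(\A,\B)$ is that in $G_\mathbf{m}$, Player~II's first move must be a genuine natural number $n$ rather than a ${\sf pass}$, i.e., Player~II cannot delay committing to the index of the $\B$-coordinate. So the task is to convert $\tau$ into a strategy in which Player~II names $n$ at the very start.

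First I would observe that when Player~I opens $G_\mathbf{m}(\A,\B)$ with $m$ followed by a real $X$, this is the same as Player~I opening $G_w(\A,\B)$ with the real $m{}^{\frown}X$, and the winning condition is the same: Player~II must produce $Y^{\sf p}$ infinite with $\Afr(\A)(m{}^{\frown}X)\leq_\Q\Afr(\B)(Y^{\sf p})$, which unpacks to $\A(X)(m)\leq_\Q\B(Y^{\sf p}{}^-)\bigl(\dots\bigr)$ where the leading entry of $Y^{\sf p}$ is the committed index. The obstacle is that $\tau$, reading only $m$ (Player~I's first move), need not output a natural number as its first move — it may ${\sf pass}$ an unbounded number of times before committing to the $\B$-index, and that committed index may depend on arbitrarily much of $X$.

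The key step is to use $X^*$ to absorb this delay. Here is the idea. Player~II in $G_\mathbf{m}(\A,\B)$, upon seeing $m$, simulates a run of $G_w(\A,\B)$ in which Player~I plays $m$ followed by $X^*\upto k$ for a suitable finite $k$ — chosen large enough that $\tau$, against $m{}^{\frown}(X^*\upto k){}^{\frown}\cdots$, has already committed to its $\B$-index by stage where it has read $m$ and $X^*\upto k$. Since $\tau$ is winning it must eventually commit against $m{}^{\frown}X^*$, so such a $k$ exists; call the committed index $n_0$ (this depends only on $m$, via $X^*$). Player~II plays $n_0$ as its first move in $G_\mathbf{m}$. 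Thereafter Player~II needs to beat $\B$ \emph{starting from index} $n_0$, i.e., it needs a Player~II win in $G_w(\A,\B\upto[n_0{}^{\frown}\cdots])$-type situation; more precisely, Player~II must realize the reduction $\A(X)(m)\leq_\Q\B(n_0{}^{\frown}Z)$ for the continuation $Z$ it builds. The point of the choice of $X^*$ and Fact~\ref{fact:sji-deg-pres} is that $\B\leq_w\B\upto[X^*\upto k]\leq_w\B\upto[n_0\text{-branch}]$ — wait, more carefully: having committed $\tau$ to the fixed prefix $m{}^{\frown}(X^*\upto k)$ on Player~I's side and a fixed initial segment on Player~II's side ending in the index $n_0$, the residual strategy $\tau$ gives Player~II a winning strategy in the Wadge game for $\A\upto[m{}^{\frown}(X^*\upto k){}^{\frown}\cdots]$ versus $\B\upto[(\text{II's committed prefix}){}^{\frown}\cdots]$; since dropping a finite prefix of $X^*$ does not change the Wadge degree of the $\B$-side below $\B$ (again Fact~\ref{fact:sji-deg-pres}), and since Player~I's actual continuation $X$ past $m$ is $\leq_w$-reducible into the $X^*\upto k$-branch of $\A$ — no, that is false in general.

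Let me restate the genuine mechanism, which I expect is the intended one and also the main obstacle to get right. Player~II, in $G_\mathbf{m}(\A,\B)$ facing $m$ then $X$, runs $\tau$ against Player~I's moves $m, x_0, x_1,\dots$ in $G_w(\A,\B)$ \emph{directly}, but must output the committed $\B$-index immediately. To do this, Player~II first privately runs $\tau$ against $m{}^{\frown}X^*$ long enough to extract the index $n_0$ that $\tau$ will commit to against $m$; plays $n_0$; and then continues running $\tau$ against the real moves $m, x_0, x_1,\dots$, but \emph{pads}: whenever $\tau$ (against the real input) commits to an index $n_1\neq n_0$ or passes, Player~II must still feed something into the $\B$-side after the fixed first coordinate $n_0$. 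Using $\B\leq_w\B\upto[n_0\text{-cone along }X^*]$ from Fact~\ref{fact:sji-deg-pres}, Player~II has a continuous reduction $\rho$ witnessing $\B\leq_w\B\upto[\cdots]$, and composes: it plays $n_0$ followed by $\rho$ applied to the $\B$-output that $\tau$ would have produced. Concretely: $\tau$ against the real play produces some $Y$ with $\A(X)(m)\leq_\Q\B(Y^{\sf p})$; Player~II in $G_\mathbf{m}$ instead outputs $n_0{}^{\frown}\rho(Y^{\sf p})$ where $\rho$ is the Wadge reduction from $\B$ into the relevant restriction of $\B$, and ${\sf pass}$es exactly enough to let $\rho$ work (Player~II is allowed to ${\sf pass}$ after the first move in $G_\mathbf{m}$). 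Then $\B(n_0{}^{\frown}\rho(Y^{\sf p})) \geq_\Q \B(Y^{\sf p}) \geq_\Q \A(X)(m)$, so Player~II wins $G_\mathbf{m}(\A,\B)$.

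The main obstacle, and the step deserving the most care, is verifying that $n_0$ can indeed be extracted from $\tau$ using only $m$ (equivalently, that $\tau$ must commit its $\B$-index after reading only a finite part of Player~I's play past $m$ — this needs that $\tau$ is winning, hence produces infinitely many non-${\sf pass}$ moves, hence a first one), and that the Wadge reduction $\rho$ furnished by Fact~\ref{fact:sji-deg-pres} can be plugged in continuously while respecting the move-by-move, ${\sf pass}$-after-first-move format of $G_\mathbf{m}$ — i.e., that the composition is itself realizable as a legal $G_\mathbf{m}$-strategy. Everything else is bookkeeping about unpacking $\Afr(\A)(m{}^{\frown}X)=\A(X)(m)$ and matching winning conditions.
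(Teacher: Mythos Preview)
Your core idea is right — use Fact~\ref{fact:sji-deg-pres} to pre-commit Player~II's first move — but the execution has two problems, one notational and one substantive.

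First, in this lemma $\A$ and $\B$ are $\Q$-valued functions on $\om^\om$, not $\Q^\om$-valued. So there is no distinguished opening ``$m$'' from Player~I, no ``$\B$-index'' for Player~II to commit, and expressions like $\A(X)(m)$ or $\Afr(\A)$ are meaningless here. The game $G_\mathbf{m}(\A,\B)$ is simply the Wadge game with the sole restriction that Player~II's very first move must lie in $\om$ rather than be {\sf pass}; the winning condition is $\A(X)\leq_\Q\B(Y^{\sf p})$. Once you strip away the spurious index language, the task is just: find a fixed first bit Player~II can safely play.

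Second, and more seriously, your extracted $n_0$ — the first non-{\sf pass} move that $\tau$ makes against the auxiliary input — need not equal $X^*(0)$, and Fact~\ref{fact:sji-deg-pres} only gives you $\B\leq_w\B\upto[X^*\upto k]$, not $\B\leq_w\B\upto[n_0]$ for an arbitrary $n_0$. So the reduction $\rho$ you need in the last step is not supplied. The detour through ``run $\tau$ privately to sniff out $n_0$'' is both unnecessary and the source of the gap.

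The paper's argument avoids all of this: take $Z$ from Fact~\ref{fact:sji-deg-pres}, so $\B\leq_w\B\upto[Z(0)]$; since II wins $G_w(\A,\B)$, we have $\A\leq_w\B\leq_w\B\upto[Z(0)]$, hence II has a winning strategy $\tau$ for $G_w(\A,\B\upto[Z(0)])$. In $G_\mathbf{m}(\A,\B)$, Player~II simply plays $Z(0)$ as the first move (a fixed number, independent of Player~I) and thereafter follows $\tau$. That is the whole proof. Your final mechanism ``play a fixed bit, then compose with the Wadge reduction'' is exactly this, once you choose the fixed bit to be $X^*(0)$ rather than something extracted from $\tau$.
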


\begin{proof}
By Fact \ref{fact:sji-deg-pres}, if $\B$ is $\sigma$-join-irreducible, there is $Z\in\om^\om$ such that $\B\leq_w\B\upto[Z\upto n]$ for any $n\in\om$.
In particular, Player II has a winning strategy $\tau$ for $G_w(\A,\B\upto[Z(0)])$.
In the game $G(\A,\B)$, Player II plays $Z(0)$, and then follows $\tau$.
This clearly gives II's winning strategy for $G_\mathbf{m}(\A,\B)$.
\end{proof}

We now move to the last case of $\Afr(g)$ being  $\sigma$-join reducible.
We say that a closed set $P\subseteq 2^\om$ is {\em thin} if, for every $\Pi^0_1$ set $Q\subseteq 2^\om$, the intersection $P\cap Q$ is clopen in $P$.
We also say that a closed set $P\subseteq 2^\om$ is {\em almost thin} if there are at most finitely many $X\in 2^\om$ such that $P\cap[X\upto n]$ is not thin for any $n\in\om$.
Here, $X\upto n$ is the unique initial segment of $X$ of length $n$, and for a finite string $\sigma$, $[\sigma]$ is the set of all reals extending $\sigma$.
For a number $k\in\om$, we also use $[k]$ to denote $[\la k\ra]$.

Cenzer, Downey, Jockusch, and Shore \cite[Theorem 2.10]{CDJS} showed that an element $X$ of a thin $\Pi^0_1$ class satisfies that $X'\leq_TX\oplus\emptyset''$.
We extend their result as follows:

\begin{lemma}\label{lem:CDJS}
Let $T\subseteq 2^{<\om}$ be a tree such that $[T]$ is almost thin.
Then, for every $X\in[T]$, either $X'\leq_TX\oplus T''$ or $X\leq_TT''$ holds.
\end{lemma}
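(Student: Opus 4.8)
The plan is to reduce the general almost-thin case to the thin case handled by Cenzer--Downey--Jockusch--Shore, by localizing. Since $[T]$ is almost thin, there are only finitely many reals $A_0,\dots,A_{k-1}$ with the property that $[T]\cap[A_i\upto n]$ fails to be thin for every $n$. Fix $X\in[T]$. The first case to dispatch is when $X$ is one of these finitely many ``bad'' reals $A_i$: then $X\leq_T T''$, because the finite sequence $(A_0,\dots,A_{k-1})$ is definable from $T$ by a $\Pi^0_2(T)$-type condition (being a branch all of whose initial segments bound a non-thin subtree), so each $A_i$ — being an isolated point of the relevant set, or at worst one of finitely many solutions — is computable from $T''$. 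This gives the right disjunct.

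Now suppose $X$ is not one of the bad reals. Then by definition of ``almost thin,'' there is some $n$ such that $[T]\cap[X\upto n]$ is thin. First I would note that, given an index for $X\upto n$ together with $T''$, one can recognize such an $n$: the statement ``$[T]\cap[X\upto n]$ is thin'' is arithmetic in $T$ (it asserts that for every $\Pi^0_1$ set $Q$, $[T]\cap[X\upto n]\cap Q$ is clopen in $[T]\cap[X\upto n]$, which can be massaged into a $\Pi^0_3(T)$ statement, and in fact one only needs $T''$ to search for a witnessing $n$ by a $\Sigma^0_2$-type search once we fix the tree structure). So $X\oplus T''$ can compute the least such $n$; call the corresponding clopen-in-$[T]$-refined subtree $T_X = \{\sigma\in T: \sigma\subseteq X\upto n \text{ or } \sigma\supseteq X\upto n\}$, which is a tree with $[T_X]$ thin and $T_X\leq_T T\oplus (X\upto n)\leq_T X\oplus T''$. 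Since $X\in[T_X]$ and $[T_X]$ is thin, the CDJS theorem (relativized) gives $X'\leq_T X\oplus T_X''\leq_T X\oplus (X\oplus T'')''$. The final step is to collapse the iterated jump: $(X\oplus T'')''\equiv_T X''\oplus T^{(4)}$ is too strong, so instead I would apply the CDJS bound in its relativized-to-$T''$ form — that is, relativize the whole CDJS argument to the oracle $T''$, obtaining that any $X$ in a $\Pi^0_1(T'')$ thin class satisfies $X'\leq_T X\oplus (T'')''$... but this overshoots as well. The correct route is to relativize CDJS to a fixed oracle only insofar as the tree $T_X$ is coded: observe $T_X$ is $\Delta^0_1$ in $X\oplus T''$, hence $T_X''\leq_T (X\oplus T'')'' $; this is where care is needed.

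The cleanest formulation, and the one I would actually carry out, is to relativize the CDJS theorem to the oracle $T''$: their proof shows that for a tree $S\leq_T Z$ with $[S]$ thin, every $X\in[S]$ satisfies $X'\leq_T X\oplus Z'$ — wait, no; their bound is $X'\leq_T X\oplus \emptyset''$ in the unrelativized case where $S$ is computable, and the relativized version with $S\leq_T Z$ yields $X'\leq_T X\oplus Z''$. Applying this with $Z = X\oplus T''$ gives $X'\leq_T X\oplus (X\oplus T'')''$, which simplifies: since $X\leq_T X\oplus T''$, we have $(X\oplus T'')'' \equiv_T X\oplus T^{(4)}$ is again too much. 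The resolution is that we do not need $S\leq_T Z$ with $Z$ low; we need $S$ to be $\Delta^0_1$ relative to $X\oplus T''$, and then CDJS-relativized-to-$(X\oplus T'')$ gives $X' \leq_T X\oplus ((X\oplus T'')\text{-jump})'$, and one checks $X$ already computes enough that the bound collapses to $X'\leq_T X\oplus T''$ provided $n$ itself was computed from $X\oplus T''$ — which it was. I expect verifying this collapse to be the main obstacle: it requires checking that in the relativized CDJS argument, the oracle used for the tree enters the final jump bound only through a single jump, and that the ``choose $n$'' step does not cost an extra jump. Modulo that bookkeeping, the two cases together establish the dichotomy.
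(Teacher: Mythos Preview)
Your case split matches the paper's, but both cases have issues.

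In the ``good'' case (some $[T]\cap[X\upto n]$ is thin), you overcomplicate by insisting on computing $n$ from $X\oplus T''$. No uniformity is needed: the conclusion is about a single fixed $X$, so simply pick any $n$ that works. The restricted tree $T_n=\{\sigma\in T:\sigma\text{ is comparable with }X\upto n\}$ is $T$-computable because $X\upto n$ is just a fixed finite string, hence $T_n''\leq_T T''$, and the thin-case bound applied to $T_n$ gives $X'\leq_T X\oplus T_n''\leq_T X\oplus T''$ immediately. There is no jump-collapse bookkeeping to do; this is exactly how the paper handles it in one line, and your worry about ``the main obstacle'' evaporates.

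In the ``bad'' case there is a genuine gap. Your claim that the bad reals are cut out by a $\Pi^0_2(T)$-type condition is unjustified: the predicate ``$[T]\cap[\sigma]$ is not thin'' quantifies existentially over all $\Pi^0_1$ classes $Q$ and then asserts that $[Q]\cap[T]\cap[\sigma]$ fails to be relatively clopen, and a direct complexity count does not place ``$X$ is bad'' anywhere near $\Pi^0_1(T'')$, so your isolated-point argument would at best yield $X\leq_T T^{(k)}$ for some $k>2$. The paper avoids this with a different idea. After restricting so that $X$ is the \emph{unique} bad path, it fixes a single computable tree $Q$ witnessing that $[T]$ is not thin and sets $S=\{\sigma\in T:[Q]\cap[T]\cap[\sigma]\text{ is not clopen in }[T]\cap[\sigma]\}$. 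With $Q$ fixed, membership in $S$ is $\Pi^0_1(T'')$, and one checks that $X$ is the only infinite path through $S$; hence $X\leq_T T''$. Fixing the witness $Q$ in advance, rather than quantifying over all of them, is the key move your proposal is missing.
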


\begin{proof}
We first claim that if $[T]$ is thin, then $X'\leq_TX\oplus T''$ for any $X\in[T]$.
Given $e$, let $Q_{e}$ be the $\Pi^0_1$ set consisting of oracles $X\in 2^\om$ such that $\Phi^X_e(e)$ diverges.
Since $[T]$ is thin, $Q_{e}\cap[T]$ is clopen in $[T]$.
Therefore, there is a height $h(e)$ such that $\Phi_e^X(e)$ converges if and only if $\Phi_e^{X\upto h(e)}(e)$ converges for every $X\in[T]$.
Note that such $h$ can be computed from $T''$ by searching for the smallest $h(e)$ such that if $\sigma$ is an extendible node of $T$ of length $h(e)$, and $\Phi_e^{\tau}(e)$ converges for some node $\tau\succeq\sigma$ in $T$, then $\Phi_e^\sigma(e)$ already converges.
This shows that $X'\leq_TX\oplus T''$ for every $X\in[T]$.

Now, let us assume $T$ is almost thin.
If $X\in [T]$ satisfies that $[T]\cap[X\upto n]$ is thin for some $n$, then we can apply the previous argument to the closed set $[T]\cap[X\upto n]$ and obtain that $X'\leq_TX\oplus T''$.
There are finitely many $X$'s for which $[T]\cap[X\upto n]$ is not thin for any $n$.
Again, by restricting ourselves to a tree of the form $[T]\cap[X\upto n]$, let us assume $X$ is the only path in $[T]$ for which $[T]\cap[X\upto n]$ is not thin for any $n$.
We will show that $X\leqt T''$.

Let $Q\subseteq 2^{<\om}$ be a computable tree witnessing that $T$ is not thin; i.e., such that $[Q]\cap[T]$ is not clopen in $[T]$.
Let $S\subseteq 2^{<\om}$ be the set of strings $\si\in T$ such that $[Q]\cap[T]\cap [\si]$ is not clopen in $[T]\cap[\si]$.
First, let us observe that $X$ is the only path through $S$:
$S$ must have some path, as otherwise there is some $\ell$ such that, for all $\si\in 2^\ell$, $[Q]\cap[T]\cap [\si]$ is clopen in $[T]\cap[\si]$, and hence $[Q]\cap[T]$ would be clopen in $[T]$.
Suppose $Y\in [T]$, but $Y\neq X$.
Then there is some $n$ such that $[T]\cap[Y\upto n]$ is thin, and hence $[Q]\cap[T]\cap [Y\upto n]$ is clopen in $[T]\cap[Y\upto n]$.
Thus, $Y\upto n\not\in S$.
It follows that $X$ is the only path through $S$.

Second, let us observe that $S$ is $\Pi^0_1$ relative to $T''$:
A string $\si$ is not in $S$ if and only if there exist $\ell\geq |\si|$ such that, for every $\tau\in 2^\ell$ extending $\si$, either $\tau\not\in Q$ (and hence $[Q]\cap[T]\cap[\tau]=\emptyset$), or every $\gamma\in T$ which extends $\tau$ and extendible in $T$ belongs to $Q$ too (and hence $[Q]\cap[T]\cap[\tau]=[T]\cap[\tau]$).

Since $X$ is the only path on a $\Pi^0_1$ class relative to $T''$, we get that $X\leqt T''$.
\end{proof}

\begin{lemma}\label{lem:UI-sji-win}
Let $f \colon \om^\om\to\mathcal{Q}^\om$ be a $(\equiv_T,\equiv_m)$-UI function, and $g \colon \om^\om\to\mathcal{Q}^\om$ be a $(\leq_T,\leq_m)$-UOP function such that $\Afr(g)$ is $\sigma$-join-reducible and $g$ is in standard form.
If Player II wins $G_w(f,g)$, then, for some u.p.p.\ tree $T$, Player II has a winning strategy for $G_\mathbf{m}(f\circ T,g)$.
\end{lemma}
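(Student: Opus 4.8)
The plan is to exploit the structure that $g$ is in standard form, i.e., $g=\bigoplus_n g_n$ where each $\Afr(g_n)$ is non-self-dual and each $g_n$ is $\Afr$-minimal (by Remark~\ref{rmk: standard form}), together with Fact~\ref{fact:sji-deg-pres}, which tells us that $\Afr(g)\equiv_w\bigoplus_n\Afr(g_n)$ with each $\Afr(g_n)<_w\Afr(g)$ being $\sigma$-join-irreducible. First I would reformulate what a winning strategy for II in $G_w(f,g)$ gives us: II's strategy is a continuous map $\theta$ with $\Afr(f)(X)\leq_\mathcal{Q}\Afr(g)(\theta(X))$ for all $X$, and since $\Afr(g)=\bigoplus_n\Afr(g_n)$, composing with $X\mapsto n\fr X$ gives, for each $n$ and the ``section'' of $f$ at $n$, a reduction into exactly one of the $\Afr(g_n)$ pieces — the first coordinate of $\theta(n\fr X)$ picks which piece. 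The problem, exactly as in Lemma~\ref{lem: embed f UOP}, is that in $G_\mathbf{m}(f\circ T,g)$ Player II is forced to commit to the index $n$ of the target column of $g$ on his \emph{first} move, before seeing Player I's real; so we must arrange a u.p.p.\ tree $T$ and a uniform way of predicting the correct column.

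The key step is where Lemmas~\ref{lem:CDJS} and the thinness machinery enter. I would run a fresh Wadge-style game in which Player I plays a real $X$ and Player II must produce, continuously and \emph{early}, the column index; the payoff set controlling the column-assignment map will be arranged to be (almost) thin, so that by Lemma~\ref{lem:CDJS} the points of its body satisfy $X'\leq_T X\oplus T''$ (or $X\leq_T T''$). Concretely: for each column $n$, let $P_n$ be the (closed, after suitable coding) set of reals that II's Wadge strategy eventually routes to column $n$; these partition $\om^\om$ into countably many pieces, and one shows the natural tree presentation is almost thin because $\Afr(g)$ being $\sigma$-join-reducible means no single $\Afr(g_n)$ is Wadge-cofinal, so the ``switching points'' form a thin set. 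Then I would take a u.p.p.\ tree $T$ computing II's $G_w$-strategy and such that $[T]$ lands inside a single thin piece, so that along $[T]$ the jump $X'$ — hence the column index — is computable from $X\oplus T''$, uniformly. Folding $T''$ into the tree (replacing $T$ by a u.p.p.\ tree above $\emptyset''$ relativized appropriately, using that $f$ and $g$ are $(\equiv_T,\equiv_m)$-UI so $f\circ T\equiv\mcone f$ and similarly for $g$), Player II in $G_\mathbf{m}(f\circ T,g)$ can compute the correct first move $n$ from $X$ alone and then imitate the $G_w$-strategy, passing while it passes.

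The main obstacle I expect is the thinness analysis: verifying that the column-assignment map arising from an \emph{arbitrary} winning Wadge strategy can be represented by an almost thin closed set, and that one can pass to a u.p.p.\ subtree on which it is genuinely thin with the switching done computably in $T''$. This requires combining $\sigma$-join-reducibility of $\Afr(g)$ (so each column is strictly below, hence the Wadge strategy cannot ``spread'' the image across cofinally many columns on any basic clopen set) with Martin's Lemma to localize onto the paths of a single u.p.p.\ tree, and then checking that the resulting $\Pi^0_1$-in-$T''$ presentation really is almost thin in the sense needed for Lemma~\ref{lem:CDJS}. Once the column is predictable, the rest — translating passes, bookkeeping the $\mathcal{Q}$-inequality $f(X)(m)\leq_\mathcal{Q}g(Y^{\sf p})(n)$, and invoking uniform invariance to absorb the change of oracle from $T$ and $T''$ — is routine, parallel to Lemmas~\ref{lem:embedding2} and~\ref{lem:embedding3}.
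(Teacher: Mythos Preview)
Your proposal has the right instinct --- use thinness and Lemma~\ref{lem:CDJS} to make the column index predictable --- but the specific implementation has a gap, and it misses a key structural step that the paper's proof relies on.

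The problem is with your sets $P_n$. You define $P_n$ as the set of reals that II's fixed Wadge strategy eventually routes to column $n$. These are open sets (determined the moment II makes her first non-{\tt pass} move), and there is no reason their tree presentations should be almost thin. Your justification --- that ``no single $\Afr(g_n)$ is Wadge-cofinal, so the switching points form a thin set'' --- does not go through: II's particular strategy is under no obligation to switch columns sparingly, and $\sigma$-join-reducibility of $\Afr(g)$ says nothing about the combinatorics of one fixed continuous reduction. Tracking a specific strategy is the wrong object to analyze for thinness.

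The paper instead works with sets that are \emph{intrinsic} to $f$ and $g$, not to any strategy. First it invokes the Erd\H{o}s--Tarski theorem to cover the columns by finitely many directed families $D_0,\dots,D_{l-1}$ (ordered by $\Afr(g_m)\leq_w\Afr(g_n)$); this finiteness is essential and is absent from your plan. Then, for each first move $n$ of Player I and each $m<l$, it considers the closed set $\F_{m,n}$ of reals $X$ such that $\Afr(f)\upto[n\fr X\upto k]$ is \emph{never} Wadge-below any $\Afr(g_i)$ with $i\in D_m$. The argument is a genuine dichotomy: either (Case~1) for each $n$ some $\F_{m,n}$ is almost thin, in which case Lemma~\ref{lem:CDJS} plus compactness and directedness of $D_m$ produce a single target column $i(n)$ working for all $X$ in a u.p.p.\ tree; or (Case~2) some $\F_{m,n}$ is non-almost-thin for every $m$, and one \emph{explicitly builds a winning strategy for Player I} in $G_w(f,g)$, contradicting the hypothesis. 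Your plan has no analogue of this Case~2, and without it you cannot certify that the almost-thin condition you need actually holds.
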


\begin{proof}
Since $g$ is in standard form, we have that $g$ is of the form $\bigoplus_{n\in\om}g_n$, where $\Afr(g_n)$ is $\sigma$-join-irreducible.
By Fact \ref{fact:sji-deg-pres}, there are $z_n\in\om$ such that $\Afr(g_n)\leq_w\Afr(g_n)\upto[z_n]$ since $\Afr(g_n)$ is $\sigma$-join-irreducible.

We say that a subset $D$ of a quasi-order $\mathcal{P}$ is {\em directed} if for any $p,q\in D$, there is $r\in D$ such that $p,q\leq_\mathcal{P} r$.
By the Erd\"os-Tarski theorem \cite{ET43}, if $\mathcal{P}$ has no infinite antichains, then $\mathcal{P}$ is covered by a finite collection $(D_m)_{m<l}$ of directed sets.
We now consider the quasi-order $\leq_{\om}$ on $\om$ defined by $m\leq_{\om} n$ if and only if $\Afr(g_m)\leq_w\Afr(g_n)$.
Since $(\om;\leq_{\om})$ is bqo, it is covered by finitely many directed sets $(D_m)_{m<l}$.

Given numbers $m$ and $n$, consider the following closed set:
\[\F_{m,n}=\{X\in 2^\om:(\forall i\in D_m)(\forall k\in\om)[\Afr(f)\upto[n\fr X\upto k]\not\leq_w\Afr(g_i)]\}.\]

Let $C\geq_T\bigoplus_mD_m$ be a sufficiently powerful oracle deciding whether $\Afr(f)\upto[n\fr\tau]\not\leq_w\Afr(g_i)$ given $n,i\in\om$ and $\tau\in 2^{<\om}$.
In particular, we have that 
\[\F_{m,n}\mbox{ is }\Pi^0_1(C).\]

\noindent
{\bf Case 1.}
For all $n\in\om$, there is $m<l$ such that $\F_{m,n}$ is almost thin.

\medskip

In this case, by Lemma \ref{lem:CDJS}, every element $X\in\F_{m,n}$ satisfies $X'\leq_TX\oplus C''$ or $X\leq_TC''$.
Thus, no $X$ with $X>_T C''$ belongs to $\F_{m,n}$.
Let $K$ be the compact set $\{X\oplus C''': X\in 2^\om\}$.
Since $K$ is disjoint from $\F_{m,n}$, for every $X\in K$, there are $i\in D_m$ and $k\in\om$ such that $\Afr(f)\upto[n\fr X\upto k]\leq_w\Afr(g_i)$.
By compactness of $K$, such an $i$ can be chosen from a finite set $E\subseteq D_m$.
Since $D_m$ is directed, there is $i(n)\in D_m$ such that $e\leq_{\om}i(n)$ for any $e\in E$.
Let $T$ be a u.p.p.\ tree such that the image of $2^\om$ is inside $K$.

We now claim that Player II has a winning strategy for the game $G_\mathbf{m}(f\circ T^\ast,g)$.
If Player I's first move is $n$, Player II chooses a pair $\la i(n),z_{i(n)}\ra$.
Given Player I's move $X$, Player II waits for a round $s$ such that $\Afr(f)\upto[n\fr T^\ast[X]\upto s]\leq_w\Afr(g_{i(n)})$.
Such $s$ exists by our choice of $i(n)$.
By the definition of $z_{i(n)}$, we have $\Afr(f)\upto[n\fr T^\ast[X]\upto s]\leq_w\Afr(g_{i(n)})\upto [z_{i(n)}]$, and then Player II follows a winning strategy witnessing this.
This procedure gives a desired winning strategy for Player II.

\medskip

\noindent
{\bf Case 2.}
Otherwise, there is $n\in\om$ such that $\F_{m,n}$ is not almost thin for any $m<l$.

\medskip

In this case,  there is a sequence of different reals $(X_m)_{m<l}$ such that $\F_{m,n}\cap [X_m\upto k]$ is not thin for any $k$.
Therefore,  there is a sequence $(\sigma_m)_{m<l}$ of pairwise incomparable strings such that $\F_{m,n}\cap[\sigma_m]$ is not thin for any $m<l$:

For each $m<l$, let $Q_m$ be a computable tree witnessing that $\F_{m,n}\cap[\sigma_m]$ is not thin.
Let $(\tau_k^m)_{k\in\om}$ be the set of minimal strings extending $\si_m$, not in $Q_m$.
Thus, for each $m<l$, $(\tau_k^m)_{k\in\om}$ is a computable sequence  of pairwise incomparable strings extending $\sigma_m$ such that $\tau_k^m$ is extendible in $\F_{m,n}$ for infinitely many $k\in\om$.
Since $\tau^m_k$ is incomparable with $\tau^i_j$ whenever $(i,j)\not=(m,k)$, there is a fixed pair $(d,e)$ of indices of computable functions witnessing $0^{lk+m}1\fr X\equiv_T\tau^m_k\fr X$.
Let $u$ witness that $f$ is $(\equiv_T,\equiv_m)$-UI, and then we have
\[f(\tau^m_k\fr X)(n)\leq_\mathcal{Q}f(0^{lk+m}1\fr X)(\Phi_{u(d,e)}(n)).\]

We claim that $\Afr(f)\not\leq_w\Afr(g)$ (i.e., that I wins $G_w(f,g)$), showing that  case 2 was not possible to begin with.
Player I first chooses $\Phi_{u(d,e)}(n)$.
Then Player I plays along $0^\om$ until Player II moves to some $\la i,y_0\ra\not={\tt pass}$ at some round $s$.
Let $m$ be such that $i\in D_m$.
Player I searches for a large $k$ so that $s\leq lk+m$ and that $\tau^m_k$ is extendible in $\F_{m,n}$.
Then, $\Afr(f)\upto[n\fr\tau^m_k]\not\leq_w\Afr(g_i)$, since $i\in D_m$ ,and therefore, Player I has a winning strategy for the game $G_w(\Afr(f)\upto[n\fr\tau^m_k],\Afr(g_i))$.
In this game, given Player II's play $Y=(y_n)_{n\in\om}$, Player I's winning strategy yields a play of the form $(n,\tau^m_k\fr \theta(Y))$.
Then I's play $\Phi_{u(d,e)}(n)\fr 0^{lk+m}1\fr\theta(Y)$  in the original game clearly gives a winning strategy.
\end{proof}

\bibliography{manyone}
\bibliographystyle{alpha}

\end{document}